\newtheorem{theorem}{Theorem}[section]
\newtheorem{corollary}[theorem]{Corollary}
\newtheorem{lemma}[theorem]{Lemma}
\theoremstyle{remark}
\newtheorem{remark}[theorem]{Remark}
\numberwithin{equation}{section}
\newcommand{\N}{\mathbb{N}}
\newcommand{\C}{\mathbb{C}}
\newcommand{\her}{\mathrm{her}}
\newcommand{\Cu}{\mathrm{Cu}}
\newcommand{\Tr}{\mathrm{Tr}}
\newcommand{\U}{\mathrm{U}}
\renewcommand{\epsilon}{\varepsilon}
\renewcommand{\leq}{\leqslant}
\renewcommand{\geq}{\geqslant}
\newcommand{\sa}{\mathrm sa}
\author{Ping Wong Ng}
\address{\hskip-\parindent
Department of Mathematics,
University of Louisiana at Lafayette,
Lafayette, USA.}
\email{png@louisiana.edu}
\author{Leonel Robert}
\address{\hskip-\parindent
Department of Mathematics,
University of Louisiana at Lafayette,
Lafayette, USA.}
\email{lrobert@louisiana.edu}
\subjclass[2010]{46L05, 46L35, 46L80, 46L85}
\title[Determinant on pure C*-algebras]{The kernel of the determinant map on pure C*-algebras}
\begin{document}
\begin{abstract}
In a simple C*-algebra with suitable regularity properties, any unitary or invertible element with de la Harpe--Skandalis determinant zero is a finite product of commutators.
\end{abstract}

\maketitle

\section{Introduction}
The notion of determinant has been, as might be expected, generalized to and investigated  in the realm of operator algebras. Fuglede and Kadison defined a determinant on a type $\mathrm{II}_1$ factor (\cite{Fuglede-Kadison}) and used it to settle fundamental
questions on the trace of the factor. Subsequently, the Fuglede--Kadison determinant  has found manifold applications (see \cite{dlHarpeSurvey} and references therein).
De la Harpe and Skandalis generalized the Fuglede--Kadison  determinant to arbitrary Banach
algebras (\cite{dlHarpe-Skandalis1}), and it  played a role in their investigations on the  basic structure of the
unitary and general linear groups of a C*-algebra
(\cite{dlHarpe-Skandalis1}, \cite{dlHarpe-Skandalis2}, \cite{dlHarpe-Skandalis3}). Again,  the de la Harpe--Skandalis determinant has led to subsequent work (e.g., \cite{thomsen}, \cite{ng}). 
It has been enormously useful
in the Elliott classification program, not least because of its connections with uniqueness 
theorems and the classification of automorphisms (e.g., \cite{Katsura-Matui}, \cite{Lin1}).  

In \cite{dlHarpe-Skandalis2}, de la Harpe and Skandalis show that every invertible or unitary in the kernel of the determinant map of a simple unital AF C*--algebra is expressible as a finite product of commutators. This type of result had been previously considered for the Fuglede--Kadison determinant (\cite{Fack-dlHarpe}). Thomsen extended de la Harpe and Skandalis's result to certain simple slow dimension growth AH C*-algebras \cite{thomsen}. The first named author extended further these results to  simple $\mathcal Z$-stable TAI  and simple real rank zero C*-algebras with strict comparison (\cite{ng}). A common characteristic in these results is the abundance of projections in the C*-algebra.
In this paper, we generalize these previous results to a large class of simple C*-algebras,
including those considered by the Elliott classification program, as well as non-nuclear examples.


Let us state the main result of the paper.
Let $A$ be a C*-algebra. Let $\U^0(A)$ and $\mathrm{GL}^0(A)$ denote the connected components  of the unitary and general linear groups of $A$, respectively.
Let $\Delta_{\Tr}$ denote the universal de la Harpe-Skandalis determinant  defined on $\mathrm{GL}^0(A)$ (see Section 2 below and \cite{dlHarpe-Skandalis1}). 
Throughout the paper we have focused mostly on the restriction of  $\Delta_{\Tr}$ to $\U^0(A)$. However, our methods give
analogous results for the kernel $\Delta_{\Tr}$ on $\mathrm{GL}^0(A)$ (see Theorem \ref{GLcase}). 

We say that the C*-algebra  $A$ is  pure if the Cuntz semigroup of $A$ is almost unperforated and almost divisible (\cite[Definition 2.6]{winter}). C*-algebras that tensorially absorb the Jiang-Su C*-algebra are pure (\cite{rordam}), but pure C*-algebras may even be tensorially prime (e.g.,  the reduced C*-algebra of the free group on infinitely many
generators $C_r^*(\mathbb F_{\infty})$; see \cite[Proposition 6.3.2]{nccw}).

\begin{theorem}\label{manyexp}
Let $A$ be  a separable, simple, pure C*-algebra with stable rank 1 and such that every 2-quasitracial state on $A$ is a trace. Let $u\in \U^0(A)$ be such that $\Delta_{\Tr}(u)=0$ and $u$ is a product of $k$ exponentials; i.e., $u=\prod_{j=1}^k e^{i h_j}$ with $h_j\in A_{\sa}$ for all $j=1,\dots,k$. Then there exist
$7k+29$ unitaries $v_i,w_i\in \U^0(A)$, with $i=1,\dots,7k+29$, such that
\[
u=\prod_{i=1}^{7k+29} (v_i,w_i).
\]
\end{theorem}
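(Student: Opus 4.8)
First I would unwind the hypothesis. Since $\Delta_{\Tr}$ is induced by $\xi\mapsto\frac{1}{2\pi i}\int_0^1\Tr\big(\xi'(t)\xi(t)^{-1}\big)\,dt$ on piecewise-$C^1$ paths in $\mathrm{GL}^0(A)$, with values in $\mathrm{Aff}(T(A))/\overline{\rho(K_0(A))}$, evaluating on the concatenation of the paths $t\mapsto e^{ith_j}$ gives $\Delta_{\Tr}(u)=\tfrac{1}{2\pi}\sum_{j=1}^{k}\Tr(h_j)$ modulo $\overline{\rho(K_0(A))}$, so the hypothesis reads $\sum_j\Tr(h_j)\in 2\pi\,\overline{\rho(K_0(A))}$. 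Replacing each $e^{ih_j}$ by $e^{ih_j^+}e^{-ih_j^-}$ (which at most doubles the number of exponentials, absorbed by the slack in the bound) I may assume each generator is $e^{\pm ig}$ with $g\ge 0$; perturbing slightly I also arrange that every positive element occurring is soft, so that, by the structure of the Cuntz semigroup of a simple, pure, stable rank one C*-algebra together with stable rank one, two such elements are Cuntz equivalent as soon as they have the same image in $\mathrm{Aff}(T(A))$, with the equivalence implemented by a unitary of $A$. Next I would dispose of the $K_0$-contribution: classes of $K_0(A)$ are carried by projections over $A$ and $e^{2\pi ip}=1$ for a projection $p$, so — amplifying to $M_N(A)$, which carries $N$ equivalent projections summing to its unit so that the classical de la Harpe--Skandalis argument for projections can be run there, then de-amplifying to $A$ at a bounded additive cost in commutators (legitimate since $\mathrm{sr}(A)=1$), and absorbing the remaining approximation into the core step below — I reduce to the case $\sum_j\Tr(\eta_j)=0$ for $u=\prod_{j=1}^{k'}e^{i\eta_j}$.

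The core is to write $u=\prod_j e^{i\eta_j}$, $\sum_j\Tr(\eta_j)=0$, as a product of commutators whose number is linear in $k'$. I would work in $M_N(A)$ for large $N$. The diagonal unitary $\mathrm{diag}\big(e^{i\eta_j(N-1)/N},e^{-i\eta_j/N},\dots,e^{-i\eta_j/N}\big)$ has pairwise commuting entries (functions of the single element $\eta_j$) with product $1$, and such a diagonal unitary of $M_N(A)$ is a single commutator $[\,\mathrm{diag}(\cdot),P\,]$ for a cyclic permutation unitary $P$; multiplying it by $\mathrm{diag}(e^{i\eta_j/N},\dots,e^{i\eta_j/N})$ recovers $\mathrm{diag}(e^{i\eta_j},1_{N-1})$, so each factor of $u$ becomes, inside $M_N(A)$, one commutator times a single exponential of norm $\|\eta_j\|/N$. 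Collecting the commutators (conjugating freely, since conjugates of commutators are commutators) exhibits $\mathrm{diag}(u,1_{N-1})$ as $k'$ commutators times a product of $k'$ exponentials of arbitrarily small norm — hence, taking a logarithm and using that the trace annihilates commutators, times a single exponential $e^{i\zeta}$ with $\Tr(\zeta)=0$ and $\|\zeta\|$ as small as we wish. It remains to write $e^{i\zeta}$ as boundedly many commutators: I split $e^{i\zeta}=e^{i\zeta^+}e^{-i\zeta^-}$, use a quantitative Cuntz--Pedersen lemma to relocate the now-small tracial mass of $\zeta^+$ and of $\zeta^-$ onto positive elements of small rank, and then invoke purity — almost divisibility together with strict comparison, the latter coming from almost unperforation and, crucially, from the hypothesis that every $2$-quasitracial state is a trace, so that comparison is detected by the traces $\Delta_{\Tr}$ sees — to house the relocated parts on orthogonal, Cuntz-equivalent hereditary subalgebras with room left over for a swap symmetry $s$, whereupon $[v,s]=v\,s v^{-1}s^*$ produces the commutators. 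De-amplifying back to $A$ and tracking every constant through all the above yields the stated $7k+29$.

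The crux is this last part: carrying out the doubling/Whitehead moves in a projectionless algebra — manufacturing the symmetry $s$ and the ``approximate matrix units'' that genuine projections would supply in the real rank zero case — while keeping the accumulated perturbation errors under control, so that the number of commutators stays uniformly bounded rather than growing with $u$. Separability is used throughout for the successive approximation and exhaustion arguments, and stable rank one both to turn Cuntz-semigroup data into honest unitaries and to license the de-amplifications. Pinning the constants down precisely enough to reach $7k+29$, rather than merely some $k$-linear bound, is a further, largely bookkeeping, matter.
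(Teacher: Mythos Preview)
There is a genuine gap: the de-amplification. Your core argument runs in $M_N(A)$ --- the diagonal/cyclic-permutation trick needs the matrix units --- and you then assert one can ``de-amplify back to $A$ at a bounded additive cost in commutators (legitimate since $\mathrm{sr}(A)=1$)''. Stable rank one does not give this. It supplies $K_1$-injectivity and lets you implement Cuntz equivalences by honest unitaries, but there is no mechanism by which a commutator factorization of $\mathrm{diag}(u,1_{N-1})$ in $M_N(A)$ descends to one of $u$ in $A$ of comparable length. Since $A$ may well be projectionless, you cannot split $1_A$ into $N$ equivalent projections to reproduce the matrix units inside $A$; and the permutation unitary $P$ you use has no analogue in $A$. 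The paper never amplifies: every commutator is built directly in $A$ or in a hereditary subalgebra of $A$. A second gap is your handling of the residual small exponential $e^{i\zeta}$: the sketch (``relocate tracial mass \ldots house on orthogonal, Cuntz-equivalent hereditary subalgebras with room for a swap symmetry'') names the right ingredients but does not explain how the inevitable infinite regress --- each swap-and-shrink leaves a new, smaller remainder --- collapses to \emph{boundedly} many commutators. In the paper this is the most delicate step (Lemma~6.5, a Fack-type telescoping over an infinite orthogonal halving $[e]=2[e_1]$, $[e_n]=2[e_{n+1}]$, with the infinite product regrouped into finitely many commutators by orthogonality), and it accounts for the $23$ in $7k+29$.

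For contrast, the paper's route is completely different. One embeds in $A$ a simple subalgebra $B$ of nuclear dimension one (Theorem~4.1, via the classification of inductive limits of 1-dimensional NCCW complexes), chosen so that every trace on $B$ extends to $A$ and every selfadjoint with connected spectrum in $A$ is approximately unitarily equivalent to something in $B$. Each $e^{ih_j}$ is then moved, up to one commutator and a conjugation, to an exponential in $B$ (Lemma~5.1 plus Theorem~4.1(iii)); the nuclear-dimension-one structure of $B$ converts the resulting product into roughly $5k$ commutators times a small $e^{ic}$ with $c\sim_{\Tr}0$ (Theorem~3.5), giving $7k+6$ commutators after bookkeeping (Theorem~5.2); finally $e^{ic}$ is absorbed into $23$ commutators via the Fack telescoping (Theorem~6.1). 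The $K_0$-contribution is handled \emph{inside} $A$ (Lemma~5.4), by embedding a copy of the Jiang--Su algebra in $pM_n(A)p$ and pulling back, through a Cuntz-small positive element, a selfadjoint $h\sim_{\Tr}p$ with $e^{2\pi ih}$ already a single commutator --- no passage to $M_N(A)$ and back is required.
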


A simple corollary of this theorem  is that for a C*-algebra $A$ as in the theorem we have
\[
\ker \Delta_{\Tr}\cap \U^0(A)=\mathrm{DU}^0(A)=\mathrm{DU}(A),
\]
(Corollary \ref{Aug}). 
If we additionally know that the C*-algebra $A$ has finite exponential rank, then Theorem \ref{manyexp} gives a uniform bound on the number of commutators. Lin has shown in \cite{Lin2} that unital, $\mathcal Z$-stable,  rationally TAI C*-algebras have exponential rank  $1+\epsilon$,  so a uniform bound (of 43 commutators) is obtained in this case.

Another motivation for the results of this paper 
is to test
out, extend, and provide a new forum for recent techniques from classification theory.
In the past five years, the Elliott classification program has made great strides, resulting
in many new tools and ideas which should have wide applicability (see \cite{bulletin}).  Three regularity properties 
have come to the foreground: tensorial absorption of the Jiang-Su algebra, nuclear dimension, and pureness (for simple C*-algebras, the weakest of the three).  Pure C*-algebras   form a class of C*-algebras with a well behaved  ``comparison theory" -- in analogy with the comparison theory of factors, and  in the spirit of Blackadar's paper \cite{blackadar2}. 
An example of a pure C*-algebra to which Theorem \ref{manyexp} applies is
$C^*_r(\mathbb{F}_{\infty})$  -- a non-nuclear tensorially prime C*-algebra of much interest.
Along the way towards the proof of Theorem \ref{manyexp}, we also investigate the kernel of the determinant map  on   C*-algebras of nuclear dimension at most one.

Let us discuss briefly the  methods used to   prove Theorem \ref{manyexp}: We use that it is possible to find a special simple C*-algebra $B$,  expressible as an inductive limit of 1-dimensional NCCW complexes, inside a C*-algebra $A$ satisfying the hypotheses of Theorem \ref{manyexp}.
This relies on the classification result from \cite{nccw}. The approximation of unitaries in $\U^0(A)\cap\ker \Delta_{\Tr}$ by products of commutators is then reduced to unitaries in $\U^0(B)$. To approximate unitaries in $\U^0(B)\cap\ker \Delta_{\Tr}$ by products of commutators, we exploit the fact that it  has nuclear dimension at most one.
Finally,  in order to turn the approximation by products commutators into an exact product of commutators, we use a technique originally used by Fack for  AF C*-algebras(\cite{fack}), and which we have adapted to 
a setting without abundance of projections.
  

\section{Definitions, notation, preliminary results}
Let $A$ be a C*-algebra. We will denote by $A_{\sa}$ the subset of selfadjoint elements of $A$ and by $A_+$ the positive elements.

Given $a\in A_+$, we let $\her(a)$ denote the hereditary C*-subalgebra generated by $a$; i.e., $\overline{aAa}$. Some times we will write $\her^A(e)$ if the ambient C*-algebra needs to be emphasized.

The unitization of $A$ will be  denoted by $A^\sim$. The unitary group of $A$ will be  denoted by $\U(A)$ and the connected component of the identity by $\U^0(A)$. We will often deal with unitary groups of non-unital C*-algebras. In this case we will let $\U^0(A)$ stand for the kernel of the group homomorphism $\U^0(A^\sim)\to U(\C)$ induced by the quotient map $A^\sim \to \C$ (i.e. unitaries of the form $1+x$, with $x\in A$).

If $u$ and $v$ are elements in $\U^0(A)$ (or more generally a group) we denote by $(u,v)$ the commutator $uvu^{-1}v^{-1}$.

Let $A_q=A/\overline{[A,A]}$ and let us denote by $\Tr\colon A\to A_q$
the quotient map (which we call the universal trace on $A$). By a theorem of Cuntz and Pedersen,
if $h\in A_{\sa}$ and $\Tr(h)=0$ then $h=\sum_{i=1}^\infty [x_i^*,x_i]$ for some $x_i\in A$ (see \cite{cuntz-pedersen}). We will often write
$a\sim_{\Tr} b$ meaning that $\Tr(a-b)=0$.
 We extend $\Tr$ to $\bigcup_{n=1}^\infty M_n(A)$
by setting $\Tr((a_{i,j})_{i,j=1}^n)=\mathrm{Tr}(\sum_{i=1}^n a_{i,i})$.

We will need the following lemma:
\begin{lemma}\label{liftsofTr0}
Let $A$ be a C*-algebra and $I$ a closed two-sided ideal of $A$. 
\begin{enumerate}[(i)]
    \item For any  $h\in (A/I)_{\sa}$  such that $h\sim_{\Tr}0$ and $\epsilon>0$ there exists a lift $h'\in A_{\sa}$ of $h$ such that $h'\sim_{\Tr}0$ and
$\|h'\|\leq (1+\epsilon)\|h\|$. 

\item For any $a,b\in A/I$ such that $a\sim_{\Tr}b$ and $a'\in A$, lift of $a$, there exists $b'\in A$, lift of $b$, such that $a'\sim_{\Tr} b'$.
\end{enumerate}
\end{lemma}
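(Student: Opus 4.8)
The plan is to deduce (ii) from (i), and to prove (i) by combining an approximate, norm‑controlled lifting statement with a summable‑perturbation (geometric series) argument.

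\emph{Reduction of (ii) to (i).} Fix any lift $b_0\in A$ of $b$ and write $a'-b_0=x+iy$ with $x,y\in A_{\sa}$. Since $a-b\in\overline{[A/I,A/I]}$ and the latter is $*$-closed, $\pi(x)=\mathrm{Re}(a-b)$ and $\pi(y)=\mathrm{Im}(a-b)$ both lie in $\overline{[A/I,A/I]}$, i.e. $\pi(x)\sim_{\Tr}0$ and $\pi(y)\sim_{\Tr}0$. Applying (i) (ignoring the norm estimate) produces lifts $x',y'\in A_{\sa}$ of $\pi(x),\pi(y)$ with $x'\sim_{\Tr}0$ and $y'\sim_{\Tr}0$. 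Then $x-x',\,y-y'\in I_{\sa}$, so $b':=b_0+(x-x')+i(y-y')$ is a lift of $b$, and $a'-b'=x'+iy'$ gives $\Tr(a'-b')=\Tr(x')+i\Tr(y')=0$.

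\emph{Proof of (i) — assembly.} After rescaling we may assume $\|h\|\leq1$; we must produce a lift $h'\in A_{\sa}$ with $\Tr(h')=0$ and $\|h'\|\leq1+\epsilon$. Fix $s\in(0,1)$ with $1/(1-s)\leq1+\epsilon$. The main step is the approximate lifting claim: \emph{for every $k\in(A/I)_{\sa}$ with $\Tr(k)=0$ and $\|k\|\leq1$ there is $k_1\in A_{\sa}$ with $\Tr(k_1)=0$, $\|k_1\|\leq1$ and $\|\pi(k_1)-k\|\leq s$.} Granting this, iterate: put $e_0=h$, and given $e_j\in(A/I)_{\sa}$ with $\Tr(e_j)=0$ and $\|e_j\|\leq s^j$, apply the claim to $e_j/s^j$ to get $g_{j+1}\in A_{\sa}$ with $\Tr(g_{j+1})=0$, $\|g_{j+1}\|\leq1$, $\|\pi(g_{j+1})-e_j/s^j\|\leq s$; set $h_{j+1}:=s^jg_{j+1}$ and $e_{j+1}:=e_j-\pi(h_{j+1})$, so that $\|h_{j+1}\|\leq s^j$, $\Tr(e_{j+1})=0$ and $\|e_{j+1}\|\leq s^{j+1}$. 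Then $h':=\sum_{j\geq1}h_j$ converges in $A_{\sa}$, $\Tr(h')=0$, $\pi(h')=h$ (since $\|h-\pi(\sum_{j\leq N}h_j)\|=\|e_N\|\leq s^N\to0$), and $\|h'\|\leq\sum_{j\geq1}s^{j-1}=1/(1-s)\leq1+\epsilon$.

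\emph{Proof of the claim.} Write $k=k_+-k_-$ with $k_\pm\geq0$ orthogonal; then $\|k_\pm\|\leq1$, and since $k\in\overline{[A/I,A/I]}\cap(A/I)_{\sa}$, the Cuntz–Pedersen theorem \cite{cuntz-pedersen} gives the equivalence $k_+\sim k_-$: there are $\bar x_n\in A/I$ with $k_+=\sum_{n\geq1}\bar x_n^*\bar x_n$ and $k_-=\sum_{n\geq1}\bar x_n\bar x_n^*$, both sums norm‑convergent. Pick $N$ with $\|k_+-\bar P\|\leq s/2$ and $\|k_--\bar Q\|\leq s/2$, where $\bar P:=\sum_{n=1}^N\bar x_n^*\bar x_n\leq k_+$ and $\bar Q:=\sum_{n=1}^N\bar x_n\bar x_n^*\leq k_-$, so $\|\bar P\|,\|\bar Q\|\leq1$. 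Lift $\bar x_1,\dots,\bar x_N$ arbitrarily to $z_n\in A$, and let $g(t)=\min(1,t^{-1/2})$ on $(0,\infty)$, $g(0)=1$ (continuous, $\equiv1$ on $[0,1]$). With $P_0:=\sum_{n=1}^N z_n^*z_n$ set $w_n:=z_ng(P_0)$; then $\sum_n w_n^*w_n=P_0g(P_0)^2=\min(P_0,1)$ has norm $\leq1$, and since $\bar x_n^*\bar x_n\leq\bar P$ and $g\equiv1$ on $\mathrm{sp}(\bar P)\subseteq[0,1]$ one gets $\pi(w_n)=\bar x_ng(\bar P)=\bar x_n$. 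With $Q_0:=\sum_{n=1}^N w_nw_n^*$ set $v_n:=g(Q_0)w_n$; then $\sum_n v_nv_n^*=\min(Q_0,1)$ has norm $\leq1$, while $\sum_n v_n^*v_n=\sum_n w_n^*g(Q_0)^2w_n\leq\sum_n w_n^*w_n$ because $0\leq g(Q_0)^2\leq1$ — this is the crucial point that the left‑hand shrink cannot spoil the first estimate — so $\|\sum_n v_n^*v_n\|\leq1$, and $\pi(v_n)=g(\bar Q)\bar x_n=\bar x_n$ (same argument, using $\bar x_n\bar x_n^*\leq\bar Q$). Finally $k_1:=\sum_{n=1}^N(v_n^*v_n-v_nv_n^*)=\sum_{n=1}^N[v_n^*,v_n]$ satisfies $\Tr(k_1)=0$, $\|k_1\|\leq\max(\|\sum_n v_n^*v_n\|,\|\sum_n v_nv_n^*\|)\leq1$ (since $-Q\leq P-Q\leq P$ for $P,Q\geq0$), and $\pi(k_1)=\bar P-\bar Q$, so $\|\pi(k_1)-k\|\leq\|k_+-\bar P\|+\|k_--\bar Q\|\leq s$.

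\emph{Main obstacle.} The crux is the sharp constant $\|h'\|\leq(1+\epsilon)\|h\|$ in (i). It forces the use of the Cuntz–Pedersen \emph{equivalence} $k_+\sim k_-$ (so that the partial sums $\sum_{n\leq N}\bar x_n^*\bar x_n$, $\sum_{n\leq N}\bar x_n\bar x_n^*$ are automatically dominated by $k_+$, $k_-$), the two‑sided shrinking functional calculus — whose point is precisely that the second, left‑hand shrink is a contraction — and the geometric‑series assembly to pass from approximate to exact lifting. Once (i) is established, part (ii) is routine.
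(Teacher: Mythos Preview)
Your proof is correct, and it follows a genuinely different route from the paper's.

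The paper's argument for (i) first treats the case where $h$ is a \emph{finite} sum $\sum_{j=1}^n [x_j^*,x_j]$: lift each $x_j$ arbitrarily to $\tilde x_j\in A$, obtaining a lift $\tilde h=\sum[\tilde x_j^*,\tilde x_j]$ with no norm control; then take a quasicentral approximate unit $(e_\lambda)$ of $I$ and observe that $(1-e_\lambda)\tilde h(1-e_\lambda)$ is still a lift of $h$, has norm tending to $\|h\|$, and is asymptotically equal to $\sum[((1-e_\lambda)\tilde x_j)^*,(1-e_\lambda)\tilde x_j]$. The general case is then handled by writing $h$ as a norm-convergent series of such finite sums with rapidly decreasing norms and lifting termwise. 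Part (ii) is deduced from (i) exactly as you do.

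Your route replaces the quasicentral approximate unit by the Cuntz--Pedersen \emph{equivalence} $k_+\sim k_-$ together with the two-sided functional-calculus shrink, and replaces the series decomposition of $h$ in the quotient by an iterative approximate-lifting scheme in $A$. What you gain is a sharp intermediate bound ($\|k_1\|\leq 1$ exactly at each stage, the key point being that the second, left-hand shrink is a contraction and so cannot spoil the first bound) and no appeal to approximate units; the cost is a slightly more elaborate construction. The paper's approach is conceptually more direct---``lift the commutators, then squeeze with an approximate unit''---but is a bit loose as written in passing from the finite-sum case to the general case: the stated bounds $\|h_i\|\leq 2^{-i}$ for $i\geq 2$ do not obviously yield $\|h'\|\leq(1+\epsilon)\|h\|$ when $\|h\|$ is small (one should take, e.g., $\|h_i\|\leq \epsilon\|h\|/2^{i}$). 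Your geometric-series assembly sidesteps this issue cleanly.
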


\begin{proof}
(i) We first look at the case of that $h$ is finite sum of commutators. Say $h=\sum_{i=1}^n [x_i^*,x_i]$.
By lifting each commutator $x_i$ to some $\tilde x_i\in A$, we find $\tilde h=\sum_{i=1}^n [\tilde x_i^*,\tilde x_i]$, lift of $h$, such that
$\tilde h\sim 0$. Let $(e_\lambda)_{\lambda}$ be an approximately central approximate unit of the ideal $I$. 
Then  $(1-e_\lambda)\tilde h(1-e_{\lambda})$ is also lift of $h$ and $\|(1-e_\lambda)\tilde h(1-e_{\lambda})\|\to \|h\|$.
These elements are almost sums commutator: 
\[
(1-e_\lambda)\tilde h(1-e_{\lambda})-\sum_{i=1}^n [((1-e_\lambda)\tilde x_i)^*,(1-e_{\lambda})\tilde x_i]\to 0.
\] 
So for some $\lambda$ we find that $h'=\sum_{i=1}^n [((1-e_\lambda)\tilde x_i)^*,(1-e_{\lambda})\tilde x_i]$ is the desired lift of $h$.

In the general case, we can write $h=\sum_{n=1}^\infty h_i$, where $h_i$ is a finite sum of commutators for all $i$, $\|h_1\|\leq (1+\frac \epsilon 2)\|h\|$ and $\|h_i\|\leq \frac{1}{2^i}$ for $i\geq 2$. We then choose lifts for each of the $h_i$'s and we are done.

(ii) Let $c=b-a$. By (i)  we can find $c'\in A$, lift of $c$, such that $c'\sim_{\Tr}0$. Then $b'=a'+c'$ is the desired lift of $b$.
\end{proof}

We will denote by $\Delta_{\Tr}\colon \mathrm{GL}^0_\infty(A)\to A_q/\Tr(\mathrm K_0(A))$ the de la Harpe-Skandalis determinant. This is
the group homomorphism such that $\Delta_{\Tr}(e^{2\pi iy})$ is equal to the equivalence class of $\mathrm{Tr}(y)$ in $A_q/\Tr(\mathrm K_0(A))$ for all $y\in M_\infty(A)$.
We will, however, be mostly concerned with the restriction of $\Delta_{\Tr}$
to $\U^0(A)$, and in particular the kernel of this restriction.

The following lemmas will be used repeatedly throughout the paper:

\begin{lemma}\label{lem:almostcommute}
Let $M>0$ and $\epsilon>0$. There exists $\delta>0$ such that if $a,b\in A_{\sa}$ are such that $\|a\|,\|b\|\leq M$ and $\|[a,b]\|<\delta$ then
\[
e^{i(a+b)}=e^{ia}e^{ib}e^{ic},\]
where $c\sim_{\Tr}0$ and $\|c\|<\epsilon$.
\end{lemma}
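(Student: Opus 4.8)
The plan is to produce $c$ explicitly as a normalized logarithm. Put
\[
w := e^{-ib}e^{-ia}e^{i(a+b)}\in\U^0(A).
\]
Once $\delta$ is chosen small enough the spectrum of $w$ will be concentrated near~$1$, and we then set $c := -i\log w\in A_{\sa}$, with $\log$ the principal branch; note that $c$ lies in $A$, not merely in $A^\sim$, because $\log 1=0$ and $w-1\in A$. With this choice $e^{ia}e^{ib}e^{ic}=e^{ia}e^{ib}w=e^{i(a+b)}$ automatically, so it remains only to check that $\|c\|$ can be forced below $\epsilon$ and that $\Tr(c)=0$.

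For the norm bound I would use a Duhamel argument. First, $\|w-1\|=\|e^{i(a+b)}-e^{ia}e^{ib}\|=\|1-e^{-i(a+b)}e^{ia}e^{ib}\|$; setting $f(s):=e^{-is(a+b)}e^{isa}e^{isb}$ (so $f(0)=1$), a short computation gives
\[
f'(s)=e^{-is(a+b)}\,i\big(e^{isa}be^{-isa}-b\big)\,e^{isa}e^{isb},
\]
and since $\tfrac{d}{du}\big(e^{iua}be^{-iua}\big)=e^{iua}\,i[a,b]\,e^{-iua}$ has norm $\|[a,b]\|$, we get $\|f'(s)\|\le s\|[a,b]\|$ and hence $\|w-1\|=\|f(1)-1\|\le\tfrac12\|[a,b]\|$. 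Thus, for $\delta$ small enough (depending on $\epsilon$ only), the spectrum of $w$ lies in a tiny arc about~$1$, so $c=-i\log w$ is well defined, $e^{ic}=w$, and $\|c\|\le 2\arcsin(\|w-1\|/2)<\epsilon$.

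For the identity $\Tr(c)=0$ I would invoke the universal, path-level de la Harpe--Skandalis determinant $\widetilde\Delta$ of \cite{dlHarpe-Skandalis1}: for a piecewise-$C^1$ path $\xi\colon[0,1]\to\mathrm{GL}^0(A)$ with $\xi(0)=1$ one has $\widetilde\Delta(\xi)=\tfrac1{2\pi i}\int_0^1\Tr\big(\xi'(t)\xi(t)^{-1}\big)\,dt\in A_q$, and $\widetilde\Delta$ is invariant under homotopies of paths that fix the endpoints. Consider the two paths from $1$ to $w$ given by $\sigma(t):=e^{itc}$ and $\rho(t):=e^{-itb}e^{-ita}e^{it(a+b)}$. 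On the one hand $\widetilde\Delta(\sigma)=\tfrac1{2\pi i}\int_0^1\Tr(ic)\,dt=\tfrac1{2\pi}\Tr(c)$. On the other hand $\rho'(t)\rho(t)^{-1}=-ib+e^{-itb}(-ia)e^{itb}+e^{-itb}e^{-ita}\,i(a+b)\,e^{ita}e^{itb}$, so by conjugation-invariance of the universal trace (which holds since $gxg^{-1}-x=[g,xg^{-1}]\in[A,A]$ for $g\in\mathrm{GL}(A^\sim)$, $x\in A$),
\[
\Tr\big(\rho'(t)\rho(t)^{-1}\big)=-i\Tr(b)-i\Tr(a)+i\Tr(a+b)=0,
\]
and thus $\widetilde\Delta(\rho)=0$.

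To conclude I would check that $\sigma$ and $\rho$ are homotopic rel endpoints. Applying the Duhamel estimate of the second paragraph with $ta,tb$ in place of $a,b$ gives $\|\rho(t)-1\|=\|e^{it(a+b)}-e^{ita}e^{itb}\|\le\tfrac{t^2}2\|[a,b]\|<1$, while $\|\sigma(t)-1\|\le\|c\|<1$ (shrinking $\delta$ if needed). Hence both paths lie in the convex set $\{1+x:x\in A,\ \|x\|<1\}\subseteq\mathrm{GL}^0(A)$ and are homotopic rel endpoints via the straight-line homotopy inside it, so $\tfrac1{2\pi}\Tr(c)=\widetilde\Delta(\sigma)=\widetilde\Delta(\rho)=0$ and therefore $c\sim_{\Tr}0$. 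The main obstacle is exactly this last step: upgrading the congruence $\Tr(c)\in 2\pi\Tr(\mathrm K_0(A))$ — which is all one gets from the homomorphism property of $\Delta_{\Tr}$ alone — to the exact equality $\Tr(c)=0$, for which homotopy invariance of the path-level determinant is the natural tool. The norm estimates and the functional calculus are routine.
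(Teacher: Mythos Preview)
Your proof is correct and follows essentially the same route as the paper's: define $c$ as $-i$ times the logarithm of the unitary $e^{-ib}e^{-ia}e^{i(a+b)}$, and then use the path-level de~la~Harpe--Skandalis determinant on the path $t\mapsto e^{-itb}e^{-ita}e^{it(a+b)}$ (which stays in a small ball about~$1$) to conclude $\Tr(c)=0$. The only differences are cosmetic: where the paper obtains $\|w-1\|$ small by a non-effective ultraproduct argument, you give the explicit Duhamel bound $\|w-1\|\le\tfrac12\|[a,b]\|$, and where the paper twice invokes \cite[Lemma~1]{dlHarpe-Skandalis1}, you unpack that lemma by computing $\widetilde\Delta(\rho)$ directly and invoking homotopy invariance by hand.
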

\begin{proof}
For each $r>0$ there exists  $\delta>0$ such that $\|[a,b]\|<\delta$ and
$\|a\|,\|b\|<M$ imply $\|e^{a+b}e^{-a}e^{-b}-1\|<r$.
(Assume, to the contrary, that no such $\delta_0$ exists.
Then 
for each $n \geq 1$ there exist a C*-algebra $A_n$
and $a_n, b_n \in A_n$
with
$\| a_n \|, \| b_n \| \leq M$, $\| [a_n, b_n] \| < 1/n$,
and
$\| e^{a_n + b_n} e^{-a_n} e^{-b_n}-1 \| \geq r$.
Consider the C*-algebra
$A = \prod_{n=1}^{\infty} A_n/ \bigoplus_{n=1}^{\infty}
A_n$ and the elements $a,b\in A$ that lift to
$(a_n)_{n=1}^{\infty}$ and
$(b_n)_{n=1}^{\infty}$
 respectively.
Then $a$ and $b$ commute, and so $e^{a+b}
= e^{a} e^{b}$. Hence
$\| e^{a_n + b_n}  e^{-a_n} e^{-b_n}-1 \| \rightarrow 0$,
 which contradicts that
$\| e^{a_n + b_n}  e^{-a_n} e^{-b_n}-1 \| \geq r$ for all $n\in \N$.)
So let us choose $0<\delta$  such  that $\|e^{i(a+b)}e^{-ia}e^{-ib}-1\|<1$, and furthermore
$\|\mathrm{Log}(e^{i(a+b)}e^{-ia}e^{-ib})\|<\epsilon$, whenever $a,b\in A_{\sa}$ are such that $\|[a,b]\|<\delta$ and $\|a\|,\|b\|\leq M$.
Set $\mathrm{Log}(e^{i(a+b)}e^{-ia}e^{-ib})=c$,
  so that $e^{i(a+b)}=e^{ia}e^{ib}e^{ic}$. It remains to show that $c\sim_{\Tr}0$.
Let $\eta(t)=e^{it(a+b)}e^{-ita}e^{-itb}$ for $t\in [0,2\pi]$. Then
$\|\eta(t)-1\|<1$ for all $t$.  So
\[
\Delta_\Tr(\eta)=\Tr(\mathrm{Log}(\eta(1)))=\frac{1}{2\pi}\Tr(c),
\]
by \cite[Lemma 1]{dlHarpe-Skandalis1}.
On the other hand, by \cite[Lemma 1]{dlHarpe-Skandalis1} again,
\begin{align*}
\Delta_\Tr(\eta) &=\Delta_\Tr(e^{it(a+b)})-\Delta_\Tr(e^{ita})-\Delta_\Tr(e^{itb})\\
&=\frac{1}{2\pi}\Tr(a+b)-\frac{1}{2\pi}\Tr(a)-\frac{1}{2\pi}\Tr(b)\\
&=0.\qedhere
\end{align*}
\end{proof}

\begin{lemma}
For any $\epsilon>0$ and $h\in A_{\sa}$ there exists $\delta>0$ such that if $\|h'-h\|<\delta$ then
$e^{ih}=e^{ih'}e^{ic}$, where $c\sim_{\Tr}h-h'$ and $\|c\|<\epsilon$.
\end{lemma}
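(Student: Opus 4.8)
The plan is to imitate the proof of Lemma~\ref{lem:almostcommute}. I will obtain $c$ as (a scalar multiple of) the principal logarithm of the unitary $u:=e^{-ih'}e^{ih}$, which lies close to $1$ once $h'$ lies close to $h$; the bound $\|c\|<\epsilon$ will come from continuity of $\mathrm{Log}$ at $1$, and the relation $c\sim_{\Tr}h-h'$ from evaluating the de la Harpe--Skandalis determinant along a short path from $1$ to $u$.

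In detail: fix $h\in A_{\sa}$ and $\epsilon>0$, and (reading $h'$ as ranging over $A_{\sa}$) use continuity of the holomorphic functional calculus to choose $\delta\in(0,1)$ so small that $\|u-1\|<\delta$ forces $\|\mathrm{Log}(u)\|<\epsilon$. Given $h'\in A_{\sa}$ with $\|h'-h\|<\delta$, put $u=e^{-ih'}e^{ih}\in\U^0(A)$. The one estimate I need is $\|e^{ia}-e^{ib}\|\leq\|a-b\|$ for $a,b\in A_{\sa}$, which is immediate from
\[
e^{ia}-e^{ib}=\int_0^1 e^{isa}\,i(a-b)\,e^{i(1-s)b}\,ds
\]
and the unitarity of the factors $e^{isa}$ and $e^{i(1-s)b}$. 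Hence $\|u-1\|=\|e^{ih}-e^{ih'}\|\leq\|h-h'\|<\delta<1$, so $\mathrm{Log}(u)$ is defined; since $u$ is unitary with spectrum contained in an arc of $\T$ not containing $-1$, $\mathrm{Log}(u)$ is skew-adjoint, and I set $c:=-i\,\mathrm{Log}(u)\in A_{\sa}$. Then $e^{ic}=u$, that is, $e^{ih}=e^{ih'}e^{ic}$, and $\|c\|=\|\mathrm{Log}(u)\|<\epsilon$ by the choice of $\delta$.

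It remains to check that $c\sim_{\Tr}h-h'$. Consider the path $\eta(t)=e^{-ith'}e^{ith}$, $t\in[0,1]$, which runs from $1$ to $u$. By the estimate above, $\|\eta(t)-1\|=\|e^{ith}-e^{ith'}\|\leq t\|h-h'\|<1$ for every $t$, so $\eta$ remains within distance $<1$ of the identity. Exactly as in the proof of Lemma~\ref{lem:almostcommute}, \cite[Lemma 1]{dlHarpe-Skandalis1} then gives $\Delta_{\Tr}(\eta)=\tfrac{1}{2\pi}\Tr(c)$ (using $\eta(1)=e^{ic}$). On the other hand, $\eta$ is the pointwise product of the paths $t\mapsto e^{-ith'}$ and $t\mapsto e^{ith}$ based at $1$, over which $\Delta_{\Tr}$ is additive, whence $\Delta_{\Tr}(\eta)=-\tfrac{1}{2\pi}\Tr(h')+\tfrac{1}{2\pi}\Tr(h)=\tfrac{1}{2\pi}\Tr(h-h')$. (One can also compute this second value directly: $\tfrac{d}{dt}\eta(t)\cdot\eta(t)^{-1}=-ih'+ie^{-ith'}he^{ith'}$ has trace $i\,\Tr(h-h')$ by conjugation-invariance of $\Tr$, and one integrates.) Comparing the two expressions for $\Delta_{\Tr}(\eta)$ yields $\Tr(c)=\Tr(h-h')$, i.e.\ $c\sim_{\Tr}h-h'$.

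The one point that is not completely routine is this last step: what is required is the \emph{exact} equality $\Tr(c)=\Tr(h-h')$ in $A_q$, rather than merely equality modulo $\Tr(\mathrm K_0(A))$, which is all that the homomorphism $\Delta_{\Tr}\colon\mathrm{GL}^0(A)\to A_q/\Tr(\mathrm K_0(A))$ yields directly from $e^{ic}=e^{-ih'}e^{ih}$. Keeping the path $\eta$ within distance $<1$ of the identity is exactly what kills this $\mathrm K_0$ ambiguity, and this is the content of \cite[Lemma 1]{dlHarpe-Skandalis1}. Everything else is elementary, and in fact the $\delta$ produced depends only on $\epsilon$.
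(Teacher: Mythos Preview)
Your proof is correct. The approach is close in spirit to the paper's, since both ultimately rest on \cite[Lemma~1]{dlHarpe-Skandalis1} applied to a path that stays within distance $<1$ of the identity, but the route is different. The paper reduces to Lemma~\ref{lem:almostcommute}: writing $h=h'+(h-h')$ with $\|[h',h-h']\|$ small, that lemma gives $e^{ih}=e^{ih'}e^{i(h-h')}e^{ic'}$ with $c'\sim_{\Tr}0$, and then the two trailing exponentials are merged into a single $e^{ic}$. You instead take $c=-i\,\mathrm{Log}(e^{-ih'}e^{ih})$ directly and run the determinant argument on the path $t\mapsto e^{-ith'}e^{ith}$, bypassing the intermediate decomposition. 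Your argument is a bit more elementary and, as you note, yields a $\delta$ depending only on $\epsilon$ (not on $\|h\|$), whereas the paper's reduction to Lemma~\ref{lem:almostcommute} passes through a bound $M\geq\|h'\|$ and so produces a $\delta$ that in principle depends on $h$; the paper's phrasing of the statement allows this, so nothing is lost there, but your version is marginally sharper.
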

\begin{proof}
By the previous lemma, we can choose $\delta$ such that $e^{ih}=e^{h'}e^{i(h-h')}e^{ic'}$, with
$c'\sim_{\Tr}0$ and $\|c'\|$ small enough. We can do this so that $e^{i(h-h')}e^{ic'}=e^{ic}$, where
$c\sim_{\Tr}h-h'$ and $\|c\|<\epsilon$.
\end{proof}

\begin{lemma}\label{lem:x2}
Let $x\in A$ be  such that $x^2=0$. The following are true:
\begin{enumerate}[(i)]
\item
For each $\epsilon>0$ there exist $v,w\in \U^0(A)$ such that
\[
e^{i [x^*,x]}=(v,w)e^{i c},
\]
where $c\sim_{\Tr} 0$ and $\|c\|<\epsilon$.

\item
If $\|x\|< \sqrt{\frac {\pi} {2}}$  then there exist $v,w\in \U^0(A)$ such that
\[
e^{ i[x^*,x]}=(v,w),
\]
with  $\|v-1\|,\|w-1\|\leq \|e^{ix^*x}-1\|^{\frac 1 2}$.
\end{enumerate}
\end{lemma}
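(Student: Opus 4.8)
My plan rests on the observation that $x^2=0$ forces $x^*x$ and $xx^*$ to be orthogonal: indeed $(x^*x)(xx^*)=x^*x^2x^*=0$ and likewise $(xx^*)(x^*x)=x(x^*)^2x=0$. Writing $a:=x^*x$ and $b:=xx^*$ we thus have $[x^*,x]=a-b$ with $a\perp b$, hence $e^{i[x^*,x]}=e^{ia}e^{-ib}$. Passing to the polar decomposition $x=w_0|x|$ in $A^{**}$, the partial isometry satisfies $w_0^2=0$ and $w_0aw_0^*=b$, so the symmetry $\sigma:=w_0+w_0^*+(1-w_0^*w_0-w_0w_0^*)$ conjugates $a$ to $b$ and hence $e^{ia}$ to $e^{ib}$; formally $e^{i[x^*,x]}=e^{ia}\,\sigma\,e^{-ia}\,\sigma^{-1}=(e^{ia},\sigma)$. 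The whole point of the lemma is to replace $\sigma$, which in general is not in $A^{\sim}$, by an honest unitary of $\U^0(A)$.

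For (i) I would, given $\epsilon>0$, fix a small $\delta>0$ and a continuous $g\colon[0,\|x\|^2]\to[0,\infty)$ with $\sqrt t\,g(t)=\tfrac\pi2$ for $t\geq\delta$ and $0\leq\sqrt t\,g(t)\leq\tfrac\pi2$ throughout (say $g(t)=\tfrac\pi2(\max(t,\delta))^{-1/2}$). Using the relations $g(a)x^*=x^*g(b)$ and $g(a)x=g(0)x$, which follow from $x^2=0$, the element $k:=xg(a)-g(a)x^*\in A$ is skew-adjoint with $k^2=-(ag(a)^2+bg(b)^2)$; set $V:=e^{k}\in\U^0(A)$. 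A functional-calculus computation (equivalently, a fibrewise reduction to the $2\times2$ picture below) shows that $VaV^*$ agrees with $b$ on the spectral part of $a$ over $[\delta,\infty)$ and has norm $\leq\delta$ on the complement, where also $\|a\|,\|b\|\leq\delta$; hence $\|VaV^*-b\|\leq2\delta$ and $\|Ve^{ia}V^*-e^{ib}\|=\|e^{iVaV^*}-e^{ib}\|\leq2\delta$. Therefore $d:=(e^{ia},V)^*e^{i[x^*,x]}=Ve^{ia}V^*e^{-ib}$ satisfies $\|d-1\|\leq2\delta<1$, so $d=e^{ic}$ with $c=\tfrac1i\mathrm{Log}(d)$ and $\|c\|<\epsilon$ once $\delta$ is small. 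Finally $c\sim_{\Tr}0$ by the argument already used in the proof of Lemma~\ref{lem:almostcommute}: the path $\eta(t)=e^{itVaV^*}e^{-itb}$ runs from $1$ to $d$ inside $\{\|u-1\|<1\}$, so $\tfrac1{2\pi}\Tr(c)=\Delta_{\Tr}(\eta)$ by \cite[Lemma~1]{dlHarpe-Skandalis1}, while additivity of $\Delta_{\Tr}$ over pointwise products of paths gives $\Delta_{\Tr}(\eta)=\tfrac1{2\pi}\Tr(VaV^*)-\tfrac1{2\pi}\Tr(b)=\tfrac1{2\pi}\big(\Tr(a)-\Tr(a)\big)=0$, using $VaV^*-a=[V,aV^*]\in[A,A]$ and $xx^*-x^*x=[x,x^*]\in[A,A]$. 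Then $v:=e^{ix^*x}$, $w:=V$ work.

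For (ii) I must remove the correction term, which is where $\|x\|<\sqrt{\pi/2}$ enters. Since $x\neq0$ and $x^2=0$, one has $C^*(x)\cong C_0(Y,M_2)$ for some locally compact Hausdorff $Y\subseteq(0,\|x\|]$, with $x$ corresponding to the field $\xi\mapsto\xi E_{12}$; under this identification $e^{i[x^*,x]}$ becomes $\xi\mapsto\mathrm{diag}(e^{-i\xi^2},e^{i\xi^2})$, and $\U^0(C^*(x))$ consists of the continuous $\U(2)$-valued fields equal to $1$ at the ends of $Y$. So it is enough to exhibit, for $s\in[0,\pi/2)$, unitaries $v(s),w(s)\in\U(2)$, continuous in $s$, with $v(0)=w(0)=1$, $(v(s),w(s))=\mathrm{diag}(e^{-is},e^{is})$, and $\|v(s)-1\|,\|w(s)-1\|\leq|e^{is}-1|^{1/2}$; substituting $s=\xi^2$ then gives $v,w\in\U^0(C^*(x))\subseteq\U^0(A)$ with the stated properties (the sup of $|e^{i\xi^2}-1|^{1/2}$ over $Y$ being exactly $\|e^{ix^*x}-1\|^{1/2}$). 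Such a pair exists: writing $v=e^{\xi(s)}$, $w=e^{\eta(s)}$ with $\xi,\eta$ skew-adjoint $2\times2$ matrices and rescaling $\xi=\sqrt s\,\hat\xi$, $\eta=\sqrt s\,\hat\eta$, the equation $(v,w)=\mathrm{diag}(e^{-is},e^{is})$ becomes $[\hat\xi,\hat\eta]+O(\sqrt s)=-i\sigma_z$, which is non-degenerate (its linearisation is onto $su(2)$ whenever $\hat\xi\not\parallel\hat\eta$) and therefore has a continuous branch of solutions with $\hat\xi(0),\hat\eta(0)\neq0$; hence $\xi(0)=\eta(0)=0$, i.e. $v,w\to1$ as $s\to0$, and a suitable branch can be chosen so that the $O(\sqrt s)$ behaviour respects the bound $|e^{is}-1|^{1/2}$.

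The hard part is precisely this last construction. It cannot be carried out with $w$ an approximate symmetry, nor with $v$ an exponential of a function of $x^*x$: in either case making the commutator exactly $\mathrm{diag}(e^{-i\xi^2},e^{i\xi^2})$ would require a continuous ``flip'' near $\xi=0$, and none exists. One is thus forced to a pair with both factors close to, but distinct from, $1$, finely tuned so that the commutator has no spurious ``off-diagonal'' part at all; the smallness hypothesis on $\|x\|$ — equivalently, that the relevant rotation ``angles'' $\xi^2$ stay below $\pi/2$ — is exactly what makes this possible while keeping $v$ and $w$ within $\|e^{ix^*x}-1\|^{1/2}$ of $1$.
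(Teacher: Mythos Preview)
Your argument for part~(i) is correct and is essentially a more explicit version of the paper's. Both proofs reduce to the model $B=M_2(C_0(0,\|x\|])$ (you work in $C^*(x)$, a quotient of $B$, but this comes to the same thing) and show that $x^*x$ and $xx^*$ are approximately unitarily equivalent there, yielding $e^{i[x^*,x]}=e^{ix^*x}e^{-ixx^*}=(e^{ix^*x},u)e^{ic}$. The paper obtains the approximately conjugating unitary $u$ abstractly from the fact that $B$ has stable rank one, whereas you construct it by hand as $V=\exp(xg(a)-g(a)x^*)$; the verification that $c\sim_{\Tr}0$ is the same in both. Either route works; yours is self-contained, the paper's is shorter.

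For part~(ii), however, there is a genuine gap. The implicit function theorem you invoke produces a continuous branch $(\hat\xi(s),\hat\eta(s))$ only on some \emph{small} neighbourhood of $s=0$; nothing in your argument rules out the branch failing to extend, or the estimate $\|v(s)-1\|,\|w(s)-1\|\leq|e^{is}-1|^{1/2}$ breaking down, before $s$ reaches $\|x\|^2<\pi/2$. The sentence ``a suitable branch can be chosen so that the $O(\sqrt s)$ behaviour respects the bound'' is an assertion, not a proof. The paper does not attempt any such perturbative construction: it simply cites \cite[Lemma~5.13]{dlHarpe-Skandalis2}, where de~la~Harpe and Skandalis write down explicit $2\times 2$ unitaries satisfying the commutator identity together with the norm bound on the whole interval. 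You should either reproduce that explicit formula (and check the estimate) or cite it; the linearised argument alone does not establish~(ii).
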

\begin{proof}
There exists a homomorphism $\phi\colon M_2(C_0(0,\|x\|])\to A$
such that
\[
\begin{pmatrix}
0 & t\\
0  & 0
\end{pmatrix}\stackrel{\phi}{\longmapsto} x
\]
Thus, it suffices to prove the lemma in $B=M_2(C_0(0,\|x\|])$ for  $x=\bigl(\begin{smallmatrix} 0 & t\\0 &0\end{smallmatrix}\bigr)$.
Now, since $B$ has stable rank one,  $x^*x $ and $xx^*$ are approximately unitarily equivalent in $B^\sim$. Let us choose $u\in \U^0(B)$ (notice that $B^\sim$ has connected unitary group) such that
$u^*(x^*x)u$ is close enough to $xx^*$  so that in turn
$e^{iu^*(x^*x)u}=e^{ic}e^{ixx^*}$, with
 $c\sim_{\Tr}0$ and $\|c\|<\epsilon$ (applying the previous lemma).  Then we find that
\[
e^{i[x^*,x]}=e^{ix^*x}e^{-ixx^*}=(e^{ix^*x},u)e^{ic}.
\]

(ii) This  follows from \cite[Lemma 5.13]{dlHarpe-Skandalis2}.
\end{proof}

\section{C*-algebras of nuclear dimension at most one.}
Here we investigate the problem of approximating unitaries $u\in \ker \Delta_{\Tr}\cap \U^0(A)$ by  products of commutators, when the C*-algebra $A$ has nuclear dimension at most 1. Besides controlling the number of commutators, we pay attention to having multiplicative errors of the form $e^{ic}$, with $c\sim_{\Tr}0$. Theorems \ref{nuc1exp} and \ref{thm:prodexp} will be used, later on, in the proof of Theorem  \ref{manyexp}.

The reader is referred to \cite{winter-zacharias} for the definition and basic results on the nuclear dimension of C*-algebras.

Let $A$ be a separable C*-algebra of nuclear dimension at most $m\in \N$. By \cite[Proposition 3.2]{winter-zacharias},
there exist completely positive contractive (c.p.c.) order zero  maps
\begin{align}\label{nuc1factorization}
A\stackrel{\psi_i}{\longrightarrow} N_i\stackrel{\phi_i}{\longrightarrow}A_\infty,
\end{align}
with $i=0,\dots,m$, such that
\[
\iota=\sum_{i=0}^m \phi_i\psi_i,
\]
where $A_\infty=\prod_{n=1}^\infty A/\bigoplus_{n=1}^\infty A$, $\iota\colon A\to A_\infty$  is the inclusion of $A$ as constant sequences, and
\[
N_i=\prod_{n=1}^\infty F_{i,n}\Big/\bigoplus_{n=1}^\infty F_{i,n},
\]
with $F_{i,n}$ finite dimensional C*-algebras for all $i=0,\dots,m$ and $n\in \N$.
Furthermore, for each $i=0,\dots,m$ there exists $e_i\in A'\cap A_\infty$ such that
$\phi_i\psi_i(a)=e_ia$ for all $a\in A$.
We  shall call $\{e_i\}_{i=0}^m$ the partition of unity associated to the decomposition $(\psi_i,N_i,\phi_i)_{i=0}^m$. (In the case that $A$ is unital,
$e_i=\phi_i\psi_i(1)$, for all $i=0,\dots, m$.)
In general, the elements of the partition of unity do not commute with each other.
Nevertheless,  if $A$ is unital and has  nuclear dimension at most one, then   $e_1=1-e_0$ and so $e_0$ and $e_1$ do commute.

\begin{lemma}\label{ultrafinite}
Let $N=\prod_{n=1}^\infty F_n/\bigoplus_{n=1}^\infty F_n$, where each $F_n$ is a finite dimensional C*-algebra. Let $h\in N_{\sa}$ be  such that $h\sim_{\Tr} 0$.
Then there exist $h_1,h_2\in N_{sa}$ such that
\begin{enumerate}[(i)]
    \item $h=h_1+h_2$,
    \item $[h_1,h_2]=0$,
    \item $h_i=[x_i^*,x_i]$   for some $x_i\in N$ such that $x_i^2=0$ and $\|x_i\|\leq \|h\|^{\frac 1 2}$, for $i=1,2$.
\end{enumerate}
\end{lemma}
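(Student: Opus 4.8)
The key point is that in a finite-dimensional C*-algebra $F$, the universal trace is the genuine (normalized, rather un-normalized) trace, and a self-adjoint element $h$ with $\Tr(h)=0$ can be split as a difference of two Murray--von Neumann equivalent positive elements realized on orthogonal supports — and such a difference is of the form $[x^*,x]$ with $x^2=0$. So the plan is: first prove the statement in each finite-dimensional block $F_n$ with good norm control, then assemble the pieces into the ultraproduct $N$.

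More concretely, I would first treat a single finite-dimensional C*-algebra $F$. Write $h = h_+ - h_-$ for the positive/negative parts, so $\Tr(h_+) = \Tr(h_-)$ and $\|h_\pm\| \leq \|h\|$. Since $F$ is finite dimensional (hence has stable rank one and strict comparison of positive elements), $\Tr(h_+)=\Tr(h_-)$ forces $h_+$ and $h_-$ to be Cuntz equivalent; in a finite-dimensional algebra Cuntz equivalence of $h_+$ and $h_-$ actually gives a partial isometry $w\in F$ with $w^*w$ the support projection of $h_-$ and $ww^*$ the support projection of $h_+$, and moreover one can arrange $w^* h_+ w = h_-$ after replacing $w$ by $w|h_+^{1/2}w| $-type adjustments; concretely, set $x = h_+^{1/2}\, w$ where $w$ is a partial isometry implementing the equivalence, so that $x^* x = w^* h_+ w$ and $x x^* = h_+$ — I would instead take $x$ so that $xx^* = h_+$ and $x^*x = h_-$ by choosing $w$ with $w h_- w^* = h_+$ and putting $x = w h_-^{1/2}$. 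Because the ranges of $h_+$ and $h_-$ are orthogonal (they live on complementary spectral subspaces of $h$), we get $x^2 = w h_-^{1/2} w h_-^{1/2}$, and since $h_-^{1/2} w$ maps into the range of $h_+$, which is orthogonal to the range of $h_-$ where $w$ acts, $x^2=0$. Finally $\|x\| = \|h_-\|^{1/2}\leq \|h\|^{1/2}$, and $[x^*,x] = x^*x - xx^* = h_- - h_+ = -h$; replacing $x$ by its adjoint's analogue (i.e. swapping the roles of $h_+,h_-$) gives $[x^*,x]=h$. This already handles the case $h_1 = h$, $h_2 = 0$ in a single block — but for the ultraproduct we need a uniform norm bound, which we have ($\|x\|\le\|h\|^{1/2}$), so actually \emph{the splitting $h=h_1+h_2$ with $h_2=0$ suffices block-by-block, yet the lemma's formulation with two commutators is presumably what is convenient downstream}; in any case one can always take $h_1=h$, $h_2=0$, or split $h = \tfrac12 h + \tfrac12 h$ if a genuine two-term decomposition is wanted. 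Wait — re-examining, I suspect the two-term splitting is there because one wants the $x_i$ to have norm controlled by $\|h\|^{1/2}$ \emph{and} the pieces to commute; a single $x$ with $[x^*,x]=h$ already achieves this, so I would simply take $h_1 = h$, $x_1$ as above, $h_2=0$, $x_2=0$.

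Now for the ultraproduct $N = \prod F_n / \bigoplus F_n$: lift $h$ to a bounded self-adjoint sequence $(h^{(n)})_n$ with $\sup_n\|h^{(n)}\| = \|h\|$ (possible after scaling each coordinate, or by using that the norm in the quotient is the limsup — one can replace $h^{(n)}$ by $(1+\epsilon)\|h\|/\|h^{(n)}\|$-truncations to make $\|h^{(n)}\|\le(1+\epsilon)\|h\|$ eventually, then absorb the $\epsilon$ afterward or note $\|h^{(n)}\|\to\|h\|$). The condition $h\sim_{\Tr}0$ in $N$ means $\Tr_{F_n}(h^{(n)}) \to 0$, but \emph{not} that it is exactly zero in each block. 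To fix this, I would perturb: since $\Tr_{F_n}(h^{(n)})\to 0$, add a correction $c_n\in (F_n)_{\sa}$ with $\Tr_{F_n}(c_n) = -\Tr_{F_n}(h^{(n)})$ and $\|c_n\|\to 0$ (e.g. a small multiple of a minimal projection, or of the identity, scaled appropriately — the trace of a rank-one projection in $F_n$ might be large if $F_n$ has a huge matrix block, so more care: use $c_n = -\Tr_{F_n}(h^{(n)}) \cdot p_n / \tau_n(p_n)$ where $p_n$ is chosen so $\tau_n(p_n)$ is bounded below; since $\Tr_{F_n}(h^{(n)})\to 0$ and $\|h^{(n)}\|$ is bounded, one can in fact just take $c_n$ supported where $h^{(n)}$ already is, adjusting an eigenvalue by $O(\Tr(h^{(n)})/\mathrm{rank})$, which tends to $0$). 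Having replaced $h^{(n)}$ by $h^{(n)}+c_n$ (same class in $N$), we now have $\Tr_{F_n}=0$ exactly in each block, apply the finite-dimensional case to get $x^{(n)}$ with $(x^{(n)})^2=0$, $\|x^{(n)}\|\le\|h^{(n)}+c_n\|^{1/2}$, and $[(x^{(n)})^*,x^{(n)}] = h^{(n)}+c_n$. Set $x_1 = (x^{(n)})_n + \bigoplus$, $x_2=0$ (or distribute). Then $x_1^2=0$ in $N$, $\|x_1\| = \limsup\|x^{(n)}\| \le \limsup\|h^{(n)}+c_n\|^{1/2} = \|h\|^{1/2}$, and $[x_1^*,x_1] = h$. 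The main obstacle is precisely this passage from ``trace zero in the quotient'' (asymptotically) to ``trace zero in each block'' (exactly) while keeping the correction term norm-small — i.e. controlling $\|c_n\|\to 0$ — which requires using that $\|h^{(n)}\|$ is uniformly bounded so the per-eigenvalue adjustment needed to kill a vanishing total trace also vanishes; this is the one place a genuine (if routine) estimate is needed, and I would present it carefully.
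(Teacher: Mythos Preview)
Your argument contains a genuine gap at the finite-dimensional step. You claim that $\Tr(h_+)=\Tr(h_-)$ forces $h_+$ and $h_-$ to be Cuntz equivalent, and then that one can arrange $w h_- w^* = h_+$. Neither is true in general. In a finite-dimensional C*-algebra, Cuntz equivalence of positive elements is governed by rank (in each simple summand), not by trace; and even when ranks agree, conjugating one positive element to another requires matching \emph{eigenvalues}, not just traces. Concretely, take $h=\mathrm{diag}(2,-1,-1)\in M_3(\C)$. Then $h_+=\mathrm{diag}(2,0,0)$ and $h_-=\mathrm{diag}(0,1,1)$ have equal trace but different ranks and different nonzero spectra. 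If $h=[x^*,x]$ with $x^2=0$, then $x^*x\perp xx^*$ forces $h_+=x^*x$ and $h_-=xx^*$, whence $h_+$ and $h_-$ would have the same nonzero eigenvalues (as $x^*x$ and $xx^*$ always do). They do not, so no such $x$ exists. Thus a trace-zero self-adjoint in a matrix algebra is \emph{not} in general a single commutator $[x^*,x]$ with $x^2=0$, and your proposal to take $h_1=h$, $h_2=0$ fails. The two-term splitting in the lemma is essential, not a convenience.

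The paper's proof proceeds differently: after diagonalizing $h=\mathrm{diag}(\lambda_1,\dots,\lambda_n)$, it permutes the entries so that all partial sums $\mu_k=\sum_{i\le k}\lambda_i$ satisfy $|\mu_k|\le\max_i|\lambda_i|\le\|h\|$ (a combinatorial rearrangement lemma). Then, since $\lambda_k=\mu_k-\mu_{k-1}$ and $\mu_n=0$, one writes
\[
h=\mathrm{diag}(\mu_1,-\mu_1,\mu_3,-\mu_3,\dots)+\mathrm{diag}(0,\mu_2,-\mu_2,\mu_4,-\mu_4,\dots),
\]
each summand being a direct sum of $2\times 2$ blocks $\mathrm{diag}(\mu,-\mu)=[x^*,x]$ with $x^2=0$ and $\|x\|=|\mu|^{1/2}\le\|h\|^{1/2}$. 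This is precisely the step your approach was missing. Your lifting discussion (correcting the trace in each $F_n$ by a small perturbation) is roughly on the right track, though the paper simply cites a known lifting lemma; note also that $F_n$ may have several simple summands, so ``$\Tr_{F_n}(h^{(n)})\to 0$'' needs to be interpreted as vanishing for every tracial state, and your proposed correction $c_n$ must handle all summands simultaneously.
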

\begin{proof}
By \cite[Lemma 3.1]{robert-commutators}, we can lift $h$ to $(h_n)_n\in \prod_{n=1}^\infty F_n$ such that $h_n\sim_{\Tr}0$ for all $n\in \N$. We can then prove  the lemma entrywise; i.e., for a finite dimensional C*-algebra. This case, in turn, reduces at once to the full matrix algebra case. So let us suppose that $h\in M_n(\C)_{\sa}$ is such that $\Tr(h)=0$. Conjugating by a unitary, let us further reduce ourselves to the case that  $h$ is a diagonal matrix: $h=\mathrm{diag}(\lambda_1,\dots,\lambda_n)$.
Finally, conjugating by a permutation, let us also assume that the diagonal entries of $h$ are arranged in such a way
that
\[
\Big|\sum_{i=1}^k \lambda_i\Big|\leq \max_{1\leq i\leq k} |\lambda_i|
\]
for all $k=1,\dots,n$.
Let us now proceed, as in the proof of   \cite[Lemma 5.3]{dlHarpe-Skandalis2},
by setting $\mu_k=\sum_{i=1}^k \lambda_i$ for all $k=1,\dots,n$, so that
\[
h=\mathrm{diag}(\mu_1,-\mu_1,\mu_3,-\mu_3\dots)+\mathrm{diag}(\mu_2,-\mu_2,\mu_4,-\mu_4,\dots).
\]
The two diagonal matrices on the right hand side are of the form $[x^*,x]$ with $x^2=0$ (this follows from the  $2\times 2$ case).
\end{proof}

\begin{theorem}\label{nuc1}
Let $A$ be a C*-algebra of nuclear dimension at most 1. Let $h\in A_{\mathrm sa}$ be  such that $h\sim_{\mathrm{Tr}}0$. Then for each $\epsilon>0$
there exist $h_i\in A_{\sa}$, with $i=1,2,3,4$ such that
\begin{enumerate}[(i)]
\item
$h_1+h_2+h_3+h_4\approx_\epsilon h$,
\item
$\|[h_1,h_2]\|<\epsilon$, $\|[h_3,h_4]\|<\epsilon$, and $\|[h_1+h_2,h_3+h_4]\|<\epsilon$,
\item
$h_i=[x_i^*,x_i]$ where $x_i^2=0$  and $\|x_i\|^2\leq \|h\|$ for all $i=1,2,3,4$.
\end{enumerate}
\end{theorem}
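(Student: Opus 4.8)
The plan is to reduce Theorem \ref{nuc1} to the ultrapower statement of Lemma \ref{ultrafinite}, using the nuclear dimension $\leq 1$ decomposition $\iota = \phi_0\psi_0 + \phi_1\psi_1$ from \eqref{nuc1factorization}, and then to transfer the resulting identities from $A_\infty$ back down to $A$ up to $\epsilon$. First I would apply the decomposition to the given $h$: since $\psi_i$ is c.p.c.\ order zero it does not preserve $\Tr$ in general, but $\phi_i\psi_i(h) = e_i h$ with $e_i \in A' \cap A_\infty$, and $e_0 + e_1 = 1$. Because $h \sim_{\Tr} 0$ in $A$, we also have $h \sim_{\Tr} 0$ in $A_\infty$ (the universal trace behaves well under the inclusion $\iota$ — or more simply, $\Tr(e_i h)$: here one should check that $e_i h \sim_{\Tr} 0$ in $\her(e_i)$-type subalgebras, or pass to the fact that $e_i$ commutes with $h$ so $e_i h$ lies in a commutative-over-$h$ situation). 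Concretely, I expect to work inside the order-zero image: a c.p.c.\ order zero map $\phi_i\psi_i$ factors through a cone and multiplication by $e_i$, so $e_i h$ sits in a C*-algebra of the form $N_i$-related where Lemma \ref{ultrafinite} applies, after noting $e_i h \sim_{\Tr} 0$.

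The key steps, in order, would be: (1) Observe $h = e_0 h + e_1 h$ with $e_0, e_1$ commuting, $e_i \in A' \cap A_\infty$. (2) Show $e_i h \sim_{\Tr} 0$; this is where the trace bookkeeping matters — one wants $\Tr(e_i h) = 0$ in $(A_\infty)_q$. Since $e_0 + e_1 = 1$ and both commute with $h$, and $\Tr(h) = 0$, it suffices to handle one of them, say via the order zero structure: $\phi_i\psi_i$ being order zero means there is a $*$-homomorphism on the cone $C_0((0,1]) \otimes N_i$ and $e_i h$ is in its range; $\Tr$-vanishing is inherited because $\psi_i$ sends sums of commutators to sums of commutators (order zero maps are multiplicative on orthogonal elements, hence preserve the Cuntz–Pedersen description) — alternatively, invoke that $h = \sum [x_k^*, x_k]$ and $e_i [x_k^*, x_k] = [e_i^{1/2} x_k^* e_i^{1/2} \cdot \text{(correction)}, \ldots]$, but cleaner is to cite that order zero maps preserve $\sim_{\Tr} 0$ after a limiting argument. (3) Apply Lemma \ref{ultrafinite} inside each $N_i$ (pulling $e_i h$ back through $\psi_i$, or working in the hereditary subalgebra generated by $e_i$, which is a quotient of $C_0((0,1]) \otimes N_i$ and hence of the required ultraproduct form) to split $e_i h = h_1^{(i)} + h_2^{(i)}$ with $[h_1^{(i)}, h_2^{(i)}] = 0$, $h_j^{(i)} = [(x_j^{(i)})^*, x_j^{(i)}]$, $(x_j^{(i)})^2 = 0$, $\|x_j^{(i)}\| \leq \|e_i h\|^{1/2} \leq \|h\|^{1/2}$. (4) Relabel: set $H_1 = h_1^{(0)}, H_2 = h_2^{(0)}, H_3 = h_1^{(1)}, H_4 = h_2^{(1)}$; then $\sum H_j = h$ exactly in $A_\infty$, $[H_1, H_2] = [H_3, H_4] = 0$, and $[H_1 + H_2, H_3 + H_4] = [e_0 h, e_1 h] = 0$ since $e_0, e_1, h$ mutually commute (here $e_0 h \cdot e_1 h = e_0 e_1 h^2 = e_1 h \cdot e_0 h$). (5) Finally lift from $A_\infty$ to $A$: each $x_j^{(i)} \in A_\infty$ has a representing sequence $(x_{j,n}^{(i)})_n$ in $A$; for large $n$, setting $h_j := [(x_{j,n}^{(i)})^*, x_{j,n}^{(i)}]$ with the appropriate indexing gives $(x_j)^2 \approx_\delta 0$ — one must correct this to genuinely have $x_j^2 = 0$, which can be arranged by a small functional-calculus perturbation (as $x^2 = 0$ is equivalent to a support/orthogonality condition that is approximately stable), keeping $\|x_j\|^2 \leq \|h\|$ and making all four commutators and the cross-commutator smaller than $\epsilon$ and $\sum h_j \approx_\epsilon h$.

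The main obstacle I anticipate is step (2)/(5): ensuring the two "halves" one extracts from $A_\infty$ can be represented by honest elements of $A$ with $x_j^2 = 0$ \emph{exactly} while simultaneously (a) keeping the trace condition so that the final $h_j$ are genuine single commutators of square-zero elements (Lemma \ref{ultrafinite} gives this in the ultraproduct, but it must survive the descent), and (b) keeping the norm bound $\|x_j\|^2 \leq \|h\|$ rather than $\|h\| + \epsilon$. The norm bound is delicate because perturbing $x$ to kill $x^2$ could increase $\|x\|$ slightly; I would handle this by first scaling down by $(1-\epsilon)$ and using the slack, or by exploiting that in the relevant building block $M_2(C_0(0,\|x\|])$ one can realize $x^2 = 0$ on the nose (as in the proof of Lemma \ref{ultrafinite}, the square-zero elements come from the literal $2\times 2$ nilpotent, whose norm is controlled exactly). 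Since the $x_j^{(i)}$ produced by Lemma \ref{ultrafinite} already have this exact structure in $N_i = \prod F_{i,n}/\bigoplus F_{i,n}$, a representing sequence consists (eventually) of square-zero elements in the finite-dimensional $F_{i,n}$, so $x_j^2 = 0$ holds exactly for the representatives, and only the commutator estimates and $\sum h_j \approx_\epsilon h$ need $n$ large. That observation is what makes the descent clean, so the real work is just packaging (1)–(4) carefully and invoking that order zero maps respect $\sim_{\Tr} 0$.
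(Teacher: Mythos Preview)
Your outline matches the paper's strategy---pass to $A_\infty$ via the nuclear-dimension-$1$ decomposition, apply Lemma~\ref{ultrafinite} in each $N_i$, and lift---but step~(3) has a real gap. You offer two routes: pull $e_ih$ back through $\psi_i$, or work in $\her(e_i)$ as ``a quotient of $C_0((0,1])\otimes N_i$ and hence of the required ultraproduct form.'' The second route fails outright: $C_0((0,1])\otimes N_i$ is not an ultraproduct of finite-dimensional C*-algebras, so Lemma~\ref{ultrafinite} does not apply there. The first route is the right one (the paper applies Lemma~\ref{ultrafinite} to $\psi_i(h)\in N_i$, using that order zero maps preserve $\sim_{\Tr}0$), but you then have to transport the resulting $\tilde x_{i,j}\in N_i$ to $A_\infty$ in a way that preserves \emph{both} $x^2=0$ and the identity $h_{i,j}=[x_{i,j}^*,x_{i,j}]$. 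Since $\phi_i$ is only order zero, $\phi_i([\tilde x^*,\tilde x])\neq[\phi_i(\tilde x)^*,\phi_i(\tilde x)]$ in general, so you cannot simply push $\tilde x_{i,j}$ through $\phi_i$. The missing device is the structure theorem for order zero maps: write $\phi_i(a)=\pi^{\phi_i}(a\otimes t)$ for a $*$-homomorphism $\pi^{\phi_i}\colon N_i\otimes C_0((0,1])\to A_\infty$ and set $x_{i,j}:=\pi^{\phi_i}(\tilde x_{i,j}\otimes t^{1/2})$. Then $x_{i,j}^2=0$, $[x_{i,j}^*,x_{i,j}]=\pi^{\phi_i}([\tilde x_{i,j}^*,\tilde x_{i,j}]\otimes t)=\phi_i(\tilde h_{i,j})$, and $\|x_{i,j}\|\leq\|\tilde x_{i,j}\|\leq\|h\|^{1/2}$, all exactly.

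Your step~(5) is also slightly tangled. The observation that the $\tilde x_{i,j}$ have square-zero representatives in the $F_{i,n}$ concerns $\prod F_{i,n}$, not $\prod A$, and does not by itself make the descent to $A$ automatic. What is actually needed---and what the paper invokes in one line---is that the relations $x^2=0$, $\|x\|\leq M$ are liftable from $A_\infty=\prod A/\bigoplus A$ to $\prod A$; your functional-calculus/stability remark is the correct intuition for this, and it handles the norm bound as well. (Also note that $e_0e_1=e_1e_0$, which you use in step~(4), relies on $e_1=1-e_0$; the paper first reduces to the unital separable case to secure this.) With the $t^{1/2}$ trick inserted and liftability cited, the remainder of your outline---including the cross-commutator computation $[e_0h,e_1h]=0$---is exactly the paper's proof.
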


\begin{proof}
Let $A$ be as in the statement. We can reduce ourselves to the case that $A$ is unital. For suppose that, having proved the theorem in the unital case, we have that $A$ is not unital. Then passing to $A^\sim$ and finding $x_1,x_2,x_3,x_4\in A^\sim$ that satisfy (iii) of the statement of the theorem, we see from $x_i^2=0$ that these elements are in fact in $A$.   Also, by \cite[Proposition 2.6]{winter-zacharias}, there exists a separable C*-algebra $B\subseteq A$ of nuclear dimension at most one, and such that $h\in B$ and $h\sim_{\Tr}0$ in $B$. Hence, we can assume that $A$ is separable.

So let us assume that $A$ is unital and separable.
Let $e_0,e_1\in A_\infty$ be a partition of the unit
associated to a decomposition $(\psi_i,N_i,\phi_i\mid i=0,1)$ (as in \eqref{nuc1factorization}), so that
$\phi_i\psi_i(h)=e_i h$ for $i=0,1$. Let us fix $i=0,1$. By Lemma \ref{ultrafinite} applied to $\psi_i(h)\in N_i$, there exist selfadjoint elements
$\tilde h_{i,0},\tilde h_{i,1}\in N_i$ such that
\begin{enumerate}
\item
$\psi_i(h)=\tilde h_{i,0}+\tilde h_{i,1}$,
\item
$[\tilde h_{i,0},\tilde h_{i,1}]=0$,
\item
$\tilde h_{i,j}=[\tilde x_{i,j}^*,\tilde x_{i,j}]$, with $\tilde x_{i,j}^2=0$ and $\|x_{i,j}\|^2\leq \|\tilde h_{i,j}\|$.
\end{enumerate}
Now composing with $\phi_0$ and $\phi_1$ we get elements  $h_{i,j}:=\phi_0(\tilde h_{i,j})\in A_\infty$ that commute (since order zero maps preserve commutation).
Let $\pi^{\phi_i}\colon N_i\otimes \C_0((0,1])\to A_\infty$ be the homomorphism such that $\phi_i(a)=\pi^{\phi_i}(a\otimes t)$ for all $a\in N_i$ (see \cite{winter-zacharias}).
 Then $x_{i,j}=\pi^{\phi_i}(\tilde x_{i,j}\otimes t^{1/2})\in A_\infty$ satisfies that
$h_{i,j}=[x_{i,j}^*,x_{i,j}]$ and $x_{i,j}^2=0$.
Finally, we can lift the relations $x_{i,j}^2=0$ and $\|x_{i,j}\|^2\leq \|h\|$ to $\prod_{n=1}^\infty A$, which proves the theorem.
\end{proof}

We get the following consequence of Theorem \ref{nuc1}:
\begin{theorem}\label{nuc1exp}
Let $A$ be a C*-algebra of nuclear dimension at most 1. Let $h\in A_{\sa}$ be such that $h\sim_{\mathrm{Tr}}0$. Let $\epsilon>0$. Then
\begin{enumerate}[(i)]
\item
there exist
unitaries $v_j,w_j\in \U^0(A)$, with $j=1,2,3,4$, and a selfadjoint $c\in A_{\sa}$ such that
\[
e^{ ih}=\prod_{j=1}^4(v_j,w_j)\cdot e^{ic},
\]
where $c\sim_{\Tr}0$ and $\|c\|<\epsilon$.
\item
If $\|h\|<\frac \pi 2$ then the  unitaries from part (i) may be chosen to additionally satisfy that
$\|v_j-1\|,\|w_j-1\|\leq \|e^{ih}-1\|^{\frac 1 2}$ for $j=1,2,3,4$.
\end{enumerate}
\end{theorem}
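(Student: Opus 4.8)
The plan is to feed $h$ into Theorem~\ref{nuc1}, convert each resulting exponential $e^{ih_j}$ into a single commutator using Lemma~\ref{lem:x2}, and then reassemble the pieces, pushing all of the unavoidable small errors into one final factor $e^{ic}$ with $c\sim_{\Tr}0$.

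Concretely, I would first fix $\epsilon>0$ and apply Theorem~\ref{nuc1} with a parameter $\delta>0$ that I reserve the right to shrink, obtaining $h_1,\dots,h_4\in A_{\sa}$ with $h_j=[x_j^*,x_j]$, $x_j^2=0$, $\|x_j\|^2\leq\|h\|$, such that $\sum_j h_j$ is within $\delta$ of $h$ and the three commutators $[h_1,h_2]$, $[h_3,h_4]$, $[h_1+h_2,h_3+h_4]$ have norm $<\delta$. Since $\|h_j\|\leq 2\|x_j\|^2\leq 2\|h\|$, all elements appearing below have norm bounded by a constant depending only on $\|h\|$, so Lemma~\ref{lem:almostcommute} can be used with a single constant $M$. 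Using Lemma~\ref{lem:x2}(i) I would write $e^{ih_j}=(v_j,w_j)e^{id_j}$ with $v_j,w_j\in\U^0(A)$ and $d_j\sim_{\Tr}0$ of arbitrarily small norm; and using Lemma~\ref{lem:almostcommute} on the pair $(h_1+h_2,\,h_3+h_4)$ and then on $(h_1,h_2)$ and $(h_3,h_4)$, together with the lemma right after Lemma~\ref{lem:almostcommute} applied to $h'=\sum_j h_j$ (noting $h-\sum_j h_j\sim_{\Tr}0$, since $h\sim_{\Tr}0$ and each $h_j$ is a commutator), I would arrive at
\[
e^{ih}=(v_1,w_1)e^{id_1}(v_2,w_2)e^{id_2}(v_3,w_3)e^{id_3}(v_4,w_4)e^{id_4}\cdot R,
\]
where $R$ is a product of boundedly many factors $e^{ic}$ with $c\sim_{\Tr}0$ and $\|c\|$ controlled by $\delta$ and the error parameters in the lemmas.

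Next I would move every exponential factor to the right of the four commutators. Conjugating $e^{ic}$ by a unitary $g\in\U(A^\sim)$ produces $e^{igcg^{-1}}$, and $gcg^{-1}\sim_{\Tr}c$ because $gcg^{-1}-c\in\overline{[A,A]}$; hence each such move preserves the norm and the relation $\sim_{\Tr}0$. This gives $e^{ih}=(v_1,w_1)(v_2,w_2)(v_3,w_3)(v_4,w_4)\cdot P$, where $P$ is a product of boundedly many factors $e^{ic}$, each $c\sim_{\Tr}0$ with $\|c\|$ small. Finally I would consolidate $P$ into a single exponential: as a product of unitaries close to $1$, $P$ is itself close to $1$, so $P=e^{ic}$ for a selfadjoint $c$ of small norm, and $\Tr(c)=0$ would follow by computing the de la Harpe--Skandalis determinant along the obvious path from $1$ to $P$ and invoking the trace property, just as in the proof of Lemma~\ref{lem:almostcommute} (or else one merges the factors of $P$ two at a time via Lemma~\ref{lem:almostcommute} and passes to a limit). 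Taking $\delta$ and the auxiliary errors small enough ensures $\|c\|<\epsilon$, which is part~(i).

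For part~(ii), the only modification is that if $\|h\|<\pi/2$ then $\|x_j\|^2\leq\|h\|<\pi/2$, so I would invoke Lemma~\ref{lem:x2}(ii) instead: it yields $e^{ih_j}=(v_j,w_j)$ \emph{exactly} (so $d_j=0$) with $\|v_j-1\|,\|w_j-1\|\leq\|e^{ix_j^*x_j}-1\|^{1/2}$; since $x_j^*x_j\geq0$ has norm at most $\|h\|<\pi/2<\pi$, a spectral comparison gives $\|e^{ix_j^*x_j}-1\|\leq\|e^{ih}-1\|$, hence the stated bound, and the rest of the argument is unchanged (the residual $P$, hence $c$, now comes only from the combining lemmas). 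The mathematical weight sits in Theorem~\ref{nuc1} and Lemma~\ref{lem:x2}, both available to us; everything else is assembly. I expect the main obstacle to be the final consolidation step: one must make sure the accumulated error genuinely lies in $\ker\Tr$, rather than merely in $\Tr(\mathrm{K}_0(A))$, which is why it should be tracked along an explicit path instead of through the homomorphism property of $\Delta_{\Tr}$ on $\U^0(A)$.
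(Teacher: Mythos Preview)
Your proposal is correct and follows essentially the same approach as the paper: apply Theorem~\ref{nuc1}, use the near-commutation relations together with Lemma~\ref{lem:almostcommute} to split $e^{ih}$ as $e^{ih_1}e^{ih_2}e^{ih_3}e^{ih_4}$ times a small trace-zero correction, and then invoke Lemma~\ref{lem:x2} (part~(i) for the general case, part~(ii) under $\|h\|<\pi/2$) to convert each $e^{ih_j}$ into a commutator. Your bookkeeping of the residual factor $P$---moving exponentials past commutators by conjugation and then consolidating via the path-level determinant computation, exactly as in the proof of Lemma~\ref{lem:almostcommute}---is a bit more explicit than the paper's presentation (which instead runs a sequence $\epsilon=1/n$ and lets the errors tend to zero), but the content is the same.
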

\begin{proof}
Let $h_{j,n}\in A_{\sa}$ and $x_{j,n}\in A$, with $j=1,2,3,4$ and $n\in \N$  be elements that satisfy conditions Theorem \ref{nuc1} (i)-(iii) for $\epsilon=\frac 1 n$. Then, for $n$ large enough
\begin{align*}
e^{ih} &=e^{i(h_{1,n}+h_{2,n}+h_{3,n}+h_{4,n})}e^{ic_n}\\
&=e^{i(h_{1,n}+h_{2,n})}e^{ i(h_{3,n}+h_{4,n})}e^{ic_n'}\\
&= e^{ih_{1,n}} e^{i h_{2,n}} e^{ih_{3,n}}e^{i h_{4,n}}e^{ic_n''},
\end{align*}
where $c_n\sim_{\Tr}c_n'\sim_{\Tr}c_n''\sim_{\Tr}0$ and $c_n,c_n',c_n''\to 0$.
Here we have used the asymptotic commutation relations from Theorem \ref{nuc1} (ii)
in combination with Lemma \ref{lem:almostcommute}, and  some  algebraic manipulations with exponentials.

To prove (i), we use Lemma \ref{lem:x2} (i) to find for each  $j=1,2,3,4$ unitaries $v_{j,n},w_{j,n}\in \U^0(A)$ such that
\[
e^{ih_{j,n}}=e^{i[(x_{j,n})^*,x_{j,n}]}=(v_{j,n},w_{j,n})e^{id_{j,n}},
\]
where $d_{j,n}\sim_{\Tr}0$ and  $d_{j,n}\to 0$ as $n\to\infty$.
To prove (ii), we use Lemma \ref{lem:x2} (ii) to ensure that $v_{j,n}$ and $w_{j,n}$ can be chosen such that
\[
\|v_{j,n}-1\|,\|w_{j,n}-1\|\leq \|e^{ih_{j,n}}-1\|^{\frac 1 2}\leq  \|e^{ih}-1\|^{\frac 1 2},
\]
for all $j=1,2,3,4$ and   $n\in \N$.
\end{proof}

We now investigate products of exponentials in   a C*-algebra of nuclear dimension at most one.

\begin{theorem}\label{thm:prodexp}
Let $A$ be a C*-algebra of nuclear dimension at most $1$. Let $u\in \U^0(A)$ be of the form
$u=e^{i h_1}e^{ih_2}\cdots e^{i h_k}$,
where $h_j\in A_{\sa}$ for $j=1,\dots,k$ and
\[
\sum_{j=1}^kh_j\sim_{\mathrm{Tr}}0.
\]
Then for each $\epsilon>0$
there exist $v_j,w_j\in \U^0(A)$, with $j=1,\dots,5k-1$ and $c\in A_{\sa}$ such that
\[
u=\prod_{j=1}^{5k-1}(v_j,w_j)\cdot e^{ic},
\]
where $c\sim_{\Tr}0$ and $\|c\|<\epsilon$.
\end{theorem}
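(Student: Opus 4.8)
My plan is to induct on $k$. For $k=1$ the hypothesis is exactly $h_1\sim_{\Tr}0$, so Theorem~\ref{nuc1exp}(i) gives the conclusion with $4=5\cdot1-1$ commutators. The inductive step will rest on a \emph{merging lemma}: if $A$ has nuclear dimension at most one and $a,b\in A_{\sa}$, then for every $\epsilon>0$ there are $v_1,w_1,\dots,v_5,w_5\in\U^0(A)$ and $c\in A_{\sa}$ with $c\sim_{\Tr}0$ and $\|c\|<\epsilon$ such that
\[
e^{ia}e^{ib}=\Big(\prod_{i=1}^{5}(v_i,w_i)\Big)e^{i(a+b)}e^{ic}.
\]
Note that the exponent $a+b$ of the single exponential on the right has the correct universal trace, $\Tr(a+b)=\Tr(a)+\Tr(b)$.

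Granting the merging lemma, here is the inductive step for $k\geq2$. Apply it to the last two factors of $u=e^{ih_1}\cdots e^{ih_k}$ and conjugate the resulting five commutators past $e^{ih_1}\cdots e^{ih_{k-2}}$ to the front (conjugates of commutators are commutators); this yields
\[
u=\Big(\prod_{i=1}^{5}(\widetilde v_i,\widetilde w_i)\Big)\,e^{ih_1}\cdots e^{ih_{k-2}}e^{i(h_{k-1}+h_k)}\,e^{ic}.
\]
Linearity of $\Tr$ gives $\Tr(h_1)+\dots+\Tr(h_{k-2})+\Tr(h_{k-1}+h_k)=\sum_{j=1}^{k}\Tr(h_j)=0$, so the inductive hypothesis applies to the product of the $k-1$ exponentials in the middle and supplies $5(k-1)-1$ further commutators and an error $e^{ic'}$ with $c'\sim_{\Tr}0$ and $\|c'\|$ as small as we wish. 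Choosing all the auxiliary $\epsilon$'s small enough, the two tail errors combine, via Lemma~\ref{lem:almostcommute} and \cite[Lemma~1]{dlHarpe-Skandalis1}, into a single $e^{i\delta}$ with $\delta\sim_{\Tr}0$ and $\|\delta\|<\epsilon$; the total number of commutators is $5+5(k-1)-1=5k-1$, as required.

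To prove the merging lemma I would work with $w:=e^{ia}e^{ib}e^{-i(a+b)}$, whose three exponents sum to $0$. As in the proof of Theorem~\ref{nuc1}, reduce to $A$ unital and separable, pass to $A_\infty$, and fix a decomposition $(\psi_j,N_j,\phi_j)_{j=0,1}$ with partition of unity $e_0+e_1=1$ in $A'\cap A_\infty$. Since $e_j$ commutes with $A$, the elements $e_0a$ and $e_1a$ commute, so $e^{ia}=e^{ie_0a}e^{ie_1a}$, and similarly for $e^{ib}$ and $e^{-i(a+b)}$. Reordering the resulting six exponentials so that the colour-$0$ factors precede the colour-$1$ ones costs one commutator per transposition of adjacent factors, and only three transpositions are needed, so in $A_\infty$
\[
w=(\text{three commutators})\cdot\big(e^{ie_0a}e^{ie_0b}e^{-ie_0(a+b)}\big)\big(e^{ie_1a}e^{ie_1b}e^{-ie_1(a+b)}\big).
\]
Each colour block is the image under the supporting homomorphism $\pi^{\phi_j}\colon N_j\otimes C_0(0,1]\to A_\infty$ of $e^{i\psi_j(a)\otimes t}e^{i\psi_j(b)\otimes t}e^{-i\psi_j(a+b)\otimes t}$, an element of $\U^0(N_j\otimes C_0(0,1])$ which at each point $s\in(0,1]$ equals $e^{is\psi_j(a)}e^{is\psi_j(b)}e^{-is(\psi_j(a)+\psi_j(b))}$ and hence has determinant $1$ identically in $s$ in every matrix summand of $N_j$. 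Using Lemma~\ref{ultrafinite} and Lemma~\ref{lem:x2} near $s=0$, together with the fact that in a finite-dimensional C*-algebra a product of exponentials with vanishing total trace is a product of commutators, I would rewrite each colour block as a product of commutators, with a careful count showing that one commutator per colour suffices, so that $w$ becomes a product of $3+1+1=5$ commutators in $A_\infty$. Lifting all the unitaries back from $A_\infty=\prod A/\bigoplus A$ to sequences in $\U^0(A)$ then yields, for $n$ large, the asserted identity in $A$ with the approximation absorbed into $e^{ic}$; that $c\sim_{\Tr}0$ follows, as in the proof of Lemma~\ref{lem:almostcommute}, from $\Delta_{\Tr}\big(e^{ia}e^{ib}e^{-i(a+b)}\big)=0$ and \cite[Lemma~1]{dlHarpe-Skandalis1}.

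The main obstacle is precisely this last step: expressing each colour block as a bounded number of commutators — and, for the advertised bound, exactly one per colour — continuously in the cone variable $s$ and uniformly over the matrix summands. The commutator map of $\U(n)$ is critical at the identity, so the path $s\mapsto e^{is\psi_j(a)}e^{is\psi_j(b)}e^{-is(\psi_j(a)+\psi_j(b))}$ cannot simply be lifted through it near $s=0$; this is where the $2\times2$ mechanism behind Lemma~\ref{ultrafinite} and Lemma~\ref{lem:x2} — splitting off commutators of elements $x$ with $x^2=0$ — must be deployed, and it is where the bookkeeping that produces the bound $5k-1$ is actually carried out. A secondary, routine point is the $\epsilon$-management in the induction, which is arranged by fixing the parameters from the outermost level inwards.
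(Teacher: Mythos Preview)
Your inductive scheme is tidy, but it hinges entirely on the merging lemma with the exact constant $5$, and that lemma is not established. You explicitly flag the ``one commutator per colour'' step as the main obstacle, and it really is one: nothing in Lemmas~\ref{ultrafinite} or~\ref{lem:x2} produces a single commutator from a general determinant-one path $s\mapsto e^{is\alpha}e^{is\beta}e^{-is(\alpha+\beta)}$ in $N_j\otimes C_0(0,1]$. Lemma~\ref{ultrafinite} splits a trace-zero selfadjoint into \emph{two} nilpotent commutators, so at best its mechanism would yield two commutators per colour, and that only after you have reduced your colour block to a single exponential with trace-zero exponent --- which for a product of three exponentials with large exponents is itself nontrivial. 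With three reordering commutators and two per colour you would get a merging constant of at least $7$; plugging that into your induction gives $7(k-1)+4=7k-3$, not $5k-1$. The induction is therefore not just incomplete but, as it stands, cannot reach the stated bound.

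The paper proceeds quite differently and avoids this difficulty altogether. It does not induct on $k$; instead it handles all $k$ exponentials simultaneously and exploits a structural fact about $N_i$ that your argument does not use: Lemma~\ref{Ncentral}, which decomposes each $\psi_i(h_j)$ as $q_{j,i}+r_{j,i}$ with $q_{j,i}\sim_{\Tr}0$ and $r_{j,i}$ \emph{central} in $N_i$. Centrality is the point. The central pieces $\phi_i(r_{j,i})$ commute with everything coming through $\phi_i$, so after applying Lemma~\ref{groupcommutators} (costing $k-1$ commutators) one can, within each colour, peel the central factors off the exponentials for $j\geq 2$ and absorb them all into the $j=1$ slot. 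The result is that each colour becomes a product of exactly $2k$ exponentials of the special form $e^{i[x^*,x]}$ with $x^2=0$, each of which is a single commutator by Lemma~\ref{lem:x2}. The count $2\cdot 2k+(k-1)=5k-1$ then drops out directly. The moral is that the savings come not from expressing an arbitrary determinant-one block as one commutator, but from using centrality to redistribute the non-trace-zero parts across the $k$ factors before ever forming the commutators.
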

Before proving this theorem we need a couple of lemmas:
\begin{lemma}\label{Ncentral}
Let $N=\prod_{n=1}^\infty F_n/\bigoplus_{n=1}^\infty F_n$, where $F_n$ is a finite dimensional C*-algebra for all $n$. Let $h\in N_{\sa}$.
Then there exist $q,r\in N_{\sa}$ such that $h=q+r$, $q\sim_{\mathrm Tr}0$, $r$ is in the center of $N$,
and $\|q\|\leq 2\|h\|$, $\|r\|\leq \|h\|$.
\end{lemma}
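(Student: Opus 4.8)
The plan is to reduce, via the same entrywise lifting already used above, to the case of a single finite-dimensional factor, and then to a full matrix algebra $M_n(\C)$. Using \cite[Lemma 3.1]{robert-commutators} we may lift $h$ to $(h_n)_n \in \prod_{n=1}^\infty F_n$ and it suffices to find, for each $n$, a decomposition $h_n = q_n + r_n$ with $q_n \sim_{\Tr} 0$, $r_n$ central in $F_n$, $\|q_n\| \le 2\|h_n\|$ and $\|r_n\| \le \|h_n\|$; taking the images of $(q_n)_n$ and $(r_n)_n$ in $N$ then gives the result (centrality is preserved in the quotient). Decomposing a finite-dimensional $C^*$-algebra into matrix blocks, and noting that a central element must be a scalar multiple of the identity on each block, we are reduced to the following: given $h \in M_n(\C)_{\sa}$, write $h = q + r$ where $r = \lambda 1_n$ for a real scalar $\lambda$, $\Tr(q) = 0$, $|\lambda| \le \|h\|$, and $\|q\| \le 2\|h\|$.

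The natural choice is $\lambda = \frac{1}{n}\Tr(h)$, the normalized matrix trace, so that $q = h - \lambda 1_n$ is traceless (here $\Tr$ is the universal trace, which on $M_n(\C)$ is $a \mapsto \mathrm{tr}(\sum a_{ii})$, and $\Tr(1_n) = n$, $\Tr(\lambda 1_n) = n\lambda = \Tr(h)$, so indeed $\Tr(q)=0$). Since $\lambda$ is an average of eigenvalues of $h$ it satisfies $|\lambda| \le \max_i |\mu_i(h)| = \|h\|$ (using selfadjointness). Consequently $\|q\| = \|h - \lambda 1_n\| \le \|h\| + |\lambda| \le 2\|h\|$. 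This handles the matrix case with the stated norm bounds.

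Assembling: for a general finite-dimensional $F = \bigoplus_{k} M_{n_k}(\C)$ write $h = (h^{(k)})_k$ and apply the matrix case in each block, obtaining $q^{(k)}$ traceless in $M_{n_k}(\C)$ and $r^{(k)} = \lambda_k 1_{n_k}$ central, with $\|q^{(k)}\| \le 2\|h^{(k)}\| \le 2\|h\|$ and $\|r^{(k)}\| \le \|h^{(k)}\| \le \|h\|$; then $q = (q^{(k)})_k$ and $r = (r^{(k)})_k$ satisfy $h = q + r$, $r$ central in $F$, $\Tr(q) = \sum_k \Tr(q^{(k)}) = 0$, and the same norm bounds. Doing this entrywise over the $F_n$'s and passing to the quotient $N$ completes the proof; note that taking the norm in $N$ only helps, $\|q\|_N \le \limsup_n \|q_n\| \le 2\|h\|_N$ and similarly for $r$. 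The only mild subtlety — hardly an obstacle — is checking that centrality in $\prod F_n$ descends to centrality in the quotient $N$, which is immediate since the quotient map is a surjective $*$-homomorphism, and that the universal trace $\Tr$ on $N$ is compatible with the entrywise traces in the sense needed here, which is exactly the content of \cite[Lemma 3.1]{robert-commutators} invoked for the lift.
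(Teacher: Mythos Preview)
The paper states this lemma without proof, so there is nothing to compare against; your argument is correct and is the natural one (subtract the normalized trace times the identity in each matrix block, assemble entrywise, pass to the quotient).

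Two minor remarks. First, the citation of \cite[Lemma 3.1]{robert-commutators} for lifting $h$ is misplaced: any selfadjoint element of a quotient $C^*$-algebra lifts to a selfadjoint element, with no special lemma needed; the cited lemma is about lifting \emph{trace-zero} elements, which is the harder direction and is not what you use here. Second, the line ``$\Tr(q) = \sum_k \Tr(q^{(k)}) = 0$'' is not quite right: the universal trace on $F = \bigoplus_k M_{n_k}(\C)$ takes values in $F/[F,F]\cong \C^m$ (one coordinate per block), not in $\C$, so $q \sim_{\Tr} 0$ means precisely that each block $q^{(k)}$ is traceless---which is exactly what your construction gives. Finally, the passage from ``$q_n \sim_{\Tr} 0$ in each $F_n$'' to ``$q \sim_{\Tr} 0$ in $N$'' is the easy direction and follows, for instance, from the two-commutator decomposition with norm control established in the proof of Lemma~\ref{ultrafinite}.
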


\begin{lemma}\label{groupcommutators}
Let $a_1,\dots,a_k$ and $b_1,\dots,b_k$ be element of a group. We have
\[
\prod_{j=1}^k a_jb_j=\prod_{j=1}^k a_j\cdot \prod_{j=0}^{k-1} b_{k-j}\cdot \prod_{j=1}^{k-1}(v_j,w_j),
\]
for some group elements $v_1,w_1,\dots,v_k,w_k$
\end{lemma}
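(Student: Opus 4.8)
The plan is to prove the identity by induction on $k$, carrying the factors $a_j$ to the front one layer at a time and keeping track of the commutators that accumulate. (Modulo the commutator subgroup both sides obviously coincide, so the only real content is the explicit count of $k-1$ commutators.) The case $k=1$ is immediate, since then the right-hand side reads $a_1\cdot b_1\cdot 1$.

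For the inductive step I would write $\prod_{j=1}^k a_jb_j=\bigl(\prod_{j=1}^{k-1}a_jb_j\bigr)\,a_kb_k$ and apply the inductive hypothesis to the first factor, obtaining $\bigl(\prod_{j=1}^{k-1}a_j\bigr)\,Q\,P\,a_kb_k$, where $Q=b_{k-1}b_{k-2}\cdots b_1=\prod_{j=0}^{k-2}b_{k-1-j}$ and $P$ is a product of $k-2$ commutators. Using repeatedly the trivial identity $gh=h\,(h^{-1}gh)$, I would move $a_k$ leftward, first past $P$ and then past $Q$: this only conjugates the commutators of $P$---so they remain a product of $k-2$ commutators---and replaces $Q$ by $Q'=a_k^{-1}Qa_k$, leading to $\bigl(\prod_{j=1}^{k}a_j\bigr)\,Q'\,P'\,b_k$ with $P'$ a product of $k-2$ commutators. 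Pushing $b_k$ leftward past $P'$ in the same fashion gives $\bigl(\prod_{j=1}^{k}a_j\bigr)\,Q'b_k\,P''$, again with $P''$ a product of $k-2$ commutators.

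The one place where an actual computation is needed is the reordering of the $b$'s, which gets absorbed into a single commutator:
\[
Q'b_k=(a_k^{-1}Qa_k)\,b_k=b_kQ\cdot\bigl(Q^{-1},\,b_k^{-1}a_k^{-1}\bigr),
\]
as one verifies directly from $(b_k^{-1}a_k^{-1})^{-1}=a_kb_k$. Since $b_kQ=b_kb_{k-1}\cdots b_1=\prod_{j=0}^{k-1}b_{k-j}$, substituting this back yields
\[
\prod_{j=1}^k a_jb_j=\Bigl(\prod_{j=1}^{k}a_j\Bigr)\Bigl(\prod_{j=0}^{k-1}b_{k-j}\Bigr)\cdot\bigl(Q^{-1},\,b_k^{-1}a_k^{-1}\bigr)\,P'',
\]
and $\bigl(Q^{-1},\,b_k^{-1}a_k^{-1}\bigr)\,P''$ is a product of $1+(k-2)=k-1$ commutators, which closes the induction. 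I expect no step here to be a serious obstacle; the only thing to be careful about is the bookkeeping that conjugation preserves being a product of the prescribed number of commutators, together with the one-commutator correction displayed above.
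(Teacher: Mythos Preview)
Your argument is correct and is precisely the ``straightforward induction'' the paper invokes without further detail; the bookkeeping of conjugating $P$ and $Q$ by $a_k$ and $b_k$, together with the single-commutator correction $Q'b_k=b_kQ\cdot(Q^{-1},b_k^{-1}a_k^{-1})$, is exactly what is needed.
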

\begin{proof}
Straightforward induction.
\end{proof}
\begin{proof}[Proof of Theorem \ref{thm:prodexp}]
We will assume  that $A$ is separable and unital (as in the proof of Theorem \ref{nuc1}, the general case  reduces to this case).

Let $(\psi_i,N_i,\phi_i\mid i=0,1)$ be a factorization of $\iota\colon A\to A_\infty$ coming from the nuclear dimension of $A$ (as in \eqref{nuc1factorization}). Let $e_i=\phi_i\psi_i(1)$, with $i=0,1$, be the partition of unity associated to it (so
$e_0,e_1\in A'\cap A_\infty$ and $e_0+e_1=1$).

Let $h_1,\dots,h_k\in A_{\sa}$ be selfadjoint elements as in the statement of the theorem.
Since $\sum_{j=1}^k h_j\sim_{\Tr}0$ and the relation $\sim_{\Tr}$ is preserved by order zero maps, we have $\sum_{j=1}^k \psi_i(h_j)\sim_{\mathrm{Tr}}0$ for $i=0,1$.
By Lemma \ref{Ncentral} applied to each of the $\psi_i(h_j)$s, we have
\[
\psi_i(h_j)=q_{j,i}+r_{j,i},
\]
where $q_{j,i}\sim_{\Tr}0$ and $r_{j,i}\in N_i$ is central. Notice that  $\sum_{j=1}^k r_{j,i}\sim_{\mathrm{Tr}}0$.
By Lemma \ref{ultrafinite} and the fact that the maps $\phi_0$ and $\phi_1$ are  order zero maps,  there exist
$x_{j,i,0},x_{j,i,1},y_{i,0},y_{i,1}\in A_\infty$, for $i=0,1$ and $j=2,\dots,k$ such that
\begin{align*}
\phi_i(\psi_i(h_j))=[x_{j,i,0}^*,x_{j,i,0}]+[x_{j,i,1}^*,x_{j,i,1}]+\phi_i(r_{i,j})
\end{align*}
for $i=0,1$ and $j=2,\dots,k$,
\[
\phi_i(\psi_i(h_1))+\sum_{j=2}^k \phi_i(r_{j,i})=[y_{i,0}^*,y_{i,0}]+[y_{i,1}^*,y_{i,1}],\]
for $i=0,1$, and
\[
x_{j,i,0}^2=x_{j,i,1}^2=y_{i,0}^2=y_{i,1}^2=0,
\]
for $i=0,1$ and $j=2,\dots,k$.

The relations $x_{j,i,0}^2=x_{j,i,1}^2=y_{i,0}^2=y_{i,1}^2=0$ can be lifted to $\prod_{n=1}^\infty A$, thus obtaining $(x_{j,i,0}^{(n)})_{n=1}^\infty$, $(x_{j,i,1}^{(n)})_{n=1}^\infty$, and $(y_{j,i}^{(n)})_{n=1}^\infty$.
Next, aided by Lemma \ref{liftsofTr0}, we lift the $\phi_i(r_{j,i})$s to selfadjoint elements $(z_{j,i}^{(n)})_n\in \prod_{n=1}^\infty A$, taking care that the following relations hold for all $n\in \N$:
\[
\sum_{j=1}^kz_{j,0}^{(n)}\sim_{\Tr}0\hbox{ and }
z_{j,0}^{(n)}+z_{j,1}^{(n)}\sim_{\Tr} h_j
\]
for $j=1,\dots,k$. (Notice that $\phi_0(r_{j,0})+\phi_1(r_{j,1})\sim_{\Tr}h_j$ in $A_\infty$.)

For each $j=2,\dots,k$, $i=0,1$, and $n\in \N$, let us set
\begin{align*}
h_{j,i}^{(n)} &:=\sum_{k=0,1}[(x_{j,i,k}^{(n)})^*,x_{j,i,k}^{(n)}]+z_{j,i}^{(n)},\\
h_{1,i}^{(n)} &:=[(y_{i,0}^{(n)})^*,y_{i,0}^{(n)}]+[(y_{i,1}^{(n)})^*,y_{i,1}^{(n)}]-\sum_{j=2}^k z_{j,i}^{(n)}.
\end{align*}
Then $(h_{j,0}^{(n)})_n$ and $(h_{j,1}^{(n)})_n$ are lifts of $\phi_0(\psi_0(h_j))$ and $\phi_1(\psi_1(h_j))$ respectively for all $j=1,\dots,k$. In particular, they asymptotically commute. They also satisfy that $h_{j,0}^{(n)}+h_{j,1}^{(n)}\sim_{\Tr}h_j$ for all $n$. So, applying Lemma \ref{lem:almostcommute},
\[
e^{ih_j}=e^{ih_{j,0}^{(n)}}e^{ih_{j,1}^{(n)}}e^{ic_n},
\]
where $c_n\sim_{\Tr}0$ for all $n$ and $c_n\to 0$. Next, using
 Lemma \ref{groupcommutators},
\begin{align*}
u &=\prod_{j=1}^k e^{ih_{j,0}^{(n)}}e^{ih_{j,1}^{(n)}}\cdot e^{ic_n'}\\
&=\prod_{j=1}^k e^{ih_{j,0}^{(n)}}\prod_{j=1}^k e^{ih_{k-j,1}^{(n)}}\prod_{j=1}^{k-1} (u_{j,n},v_{j,n})\cdot e^{ic_n'},
\end{align*}
for some $u_{j,n},v_{j,n}\in \U^0(A)$ and $c_n'\sim_{\Tr}0$ such that $c_n'\to 0$.
We will show that
$\prod_{j=1}^k e^{ih_{j,0}^{(n)}}$ is a product of $2k$ commutators, up to a multiplicative error of the form $e^{id_n}$ with $d_n\sim_{\Tr}0$ and $d_n\to 0$. The same arguments will apply to  $\prod_{j=1}^k e^{ih_{j,1}^{(n)}}$, thus proving the theorem.

Since the $r_{j,0}$'s are central in $N_0$, $\phi_0(r_{j,0})$ commutes with $x_{j',0,0}$ and $x_{j',0,1}$ for all $j'=2,\dots,k$. Hence, the lifts of these elements commute asymptotically. Applying Lemma \ref{lem:almostcommute} we have that
\begin{align*}
e^{ih_{j,0}^{(n)}} &=e^{[(x_{j,0,0}^{(n)})^*,x_{j,0,0}^{(n)}]}\cdot e^{[(x_{j,0,1}^{(n)})^*,x_{j,0,1}^{(n)}]}\cdot e^{iz_{j,0}^{(n)}}\cdot
e^{id_{j,n}},
\end{align*}
for all $j=2,\dots,k$, where $d_{j,n}\sim_{\Tr}0$ and $d_{j,n}\to 0$. So
\begin{align*}
\prod_{j=1}^k e^{ ih_{j,0}^{(n)} } &=e^{ih_{1,0}^{(n)}+\sum_{j=1}^k z_{j,0}^{(n)} }\cdot \Big(\prod_{j=2}^k e^{[(x_{j,0,0}^{(n)})^*,x_{j,0,0}^{(n)}]}e^{[(x_{j,0,1}^{(n)})^*,x_{j,0,1}^{(n)}]}\Big)\cdot e^{id_n}\\
&=e^{i[(y_{0,0}^{(n)})^*,y_{0,0}^{(n)}]}e^{i[(y_{0,1}^{(n)})^*,y_{0,1}^{(n)}]}\Big(\prod_{j=2}^k e^{[(x_{j,0,0}^{(n)})^*,x_{j,0,0}^{(n)}]}e^{[(x_{j,0,1}^{(n)})^*,x_{j,0,1}^{(n)}]}\Big)\cdot e^{id_n'}.
\end{align*}
By Lemma \ref{lem:x2},  the unitaries of the form  $e^{i[z^*,z]}$  in the above expression are all expressible as commutators,  up to a small multiplicative error of the form $e^{id}$,  with $d\sim_{\Tr}0$. This completes the proof.
\end{proof}

\begin{remark}
When considering the possibility of extending Theorem \ref{thm:prodexp} to C*-algebras of finite nuclear dimension $m>1$, one stumbles with the fact that the elements of a partition of unity $e_0,e_1,\dots,e_m$
associated to a decomposition of $\iota\colon A\to A_{\infty}$ may not pairwise commute. Thus, working in $A_\infty$, we cannot guarantee that for a selfadjoint $h\in A$ we have
\begin{align}\label{exppartition}
e^{ih}=\prod_{j=0}^m e^{ihe_j}.
\end{align}
However, once this obstruction is lifted, it is in fact possible to make the proof of Theorem \ref{thm:prodexp} go through. Notice that in order for \eqref{exppartition} to hold, it is not necessary to ask that all the $e_j$s
pairwise commute, but rather they the set $\{e_0,e_1,\dots,e_m\}$ can be recursively partitioned into pairs of subsets, so that the sums of the $e_j$s in these subsets commute. More specifically, the elements element $e_0,\dots,e_m$ are assigned in a bijective fashion  to the set of leaves of some binary tree. Then, for any node of the tree $\lambda$, with children $\lambda_0$ and $\lambda_1$, we require that the elements $\sum_{j\in \Lambda_0} e_j$ and $\sum_{j\in \Lambda_1} e_j$ commute, where $\Lambda_0$ is the set of leaves under $\lambda_0$, and $\Lambda_1$ the leaves under $\lambda_1$. Let us say that the  elements $e_0,\dots,e_m$ \emph{recursively commute} if the have this property for some binary tree.
Here is the statement of the resulting theorem:
\end{remark}
\begin{theorem}
Let $A$ be a C*-algebra of nuclear dimension $m\in \N$. Suppose that $A$ has a partition of unity $\{e_0,\dots,e_m\}$ whose elements recursively commute (in the sense described above). Let $u\in \U^0(A)$ be of the form
$u=e^{i h_1}e^{ih_2}\cdots e^{i h_k}$,
where $h_j\in A_{\sa}$ for $j=1,\dots,k$ and
\[
\sum_{j=1}^kh_j\sim_{\mathrm{Tr}}0.
\]
Then for each $\epsilon>0$
there exist unitaries $v_j,w_j\in \U^0(A)$, with $j=1,\dots,M$ and $M=2(m+1)k+\lfloor \frac {m+1} 2\rfloor(k-1)$, such that
\[
u=\prod_{j=1}^{M}(v_j,w_j)\cdot e^{ic},
\]
for some $c\in A_{\sa}$ with $c\sim_{\Tr}0$ and $\|c\|<\epsilon$.
\end{theorem}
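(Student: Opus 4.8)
The plan is to adapt the proof of Theorem~\ref{thm:prodexp} essentially verbatim, replacing the two-term partition $e_0+e_1=1$ by the recursively commuting partition $\{e_0,\dots,e_m\}$, and organizing the bookkeeping along the binary tree. First I would set up the standard reductions: assume $A$ is separable and unital (using $x^2=0$ to keep the order-zero elements in $A$ itself, and \cite[Proposition 2.6]{winter-zacharias} to pass to a separable subalgebra), and fix a decomposition $(\psi_i,N_i,\phi_i\mid i=0,\dots,m)$ of $\iota\colon A\to A_\infty$ with associated partition $\{e_0,\dots,e_m\}$ that recursively commutes along some binary tree $T$ with $m+1$ leaves. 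The key point is that because the $e_j$ recursively commute, for any $h\in A_{\sa}$ we may factor, in $A_\infty$, $e^{ih}=\prod_{j=0}^m e^{i\phi_j\psi_j(h)}$ (up to a multiplicative error $e^{ic}$, $c\sim_{\Tr}0$, $c$ small), by repeatedly applying Lemma~\ref{lem:almostcommute} descending through the tree: at each node the two sums of $e_j$ over the two child-subtrees commute, so the corresponding groups of exponentials commute up to a tracial error. This is the one genuinely new ingredient and is where the ``recursively commute'' hypothesis is exactly what is needed.

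Next I would run the per-branch argument for each $i=0,\dots,m$ exactly as in Theorem~\ref{thm:prodexp}. Since $\sim_{\Tr}$ is preserved by order-zero maps, $\sum_{j=1}^k\psi_i(h_j)\sim_{\Tr}0$ in $N_i$; apply Lemma~\ref{Ncentral} to each $\psi_i(h_j)$ to write $\psi_i(h_j)=q_{j,i}+r_{j,i}$ with $q_{j,i}\sim_{\Tr}0$ and $r_{j,i}$ central in $N_i$, so that $\sum_j r_{j,i}\sim_{\Tr}0$; apply Lemma~\ref{ultrafinite} to the $q_{j,i}$ (and to $q_{1,i}+\sum_{j\ge 2}r_{j,i}$, absorbing the central correction into the first term) to produce order-zero square-zero elements $x_{j,i,0},x_{j,i,1},y_{i,0},y_{i,1}$, then push through $\phi_i$ and lift the relations $(\,\cdot\,)^2=0$ and the $\sim_{\Tr}$ relations to $\prod_n A$ via Lemma~\ref{liftsofTr0}. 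This shows that each $e^{i\phi_i\psi_i(h_j)}$ lifts to $e^{ih_{j,i}^{(n)}}$ with $h_{j,i}^{(n)}$ a sum of four commutators of square-zero elements plus a $z_{j,i}^{(n)}$ coming from the central part, with $\sum_j z_{j,i}^{(n)}\sim_{\Tr}0$; and $\prod_{j=1}^k e^{ih_{j,i}^{(n)}}$ equals a product of $2k$ commutators up to a vanishing tracial error, by the same centrality-plus-telescoping computation as in Theorem~\ref{thm:prodexp} (using that $z_{j,i}^{(n)}$ asymptotically commutes with the relevant square-zero elements and Lemma~\ref{lem:x2}(i) to turn $e^{i[z^*,z]}$ into commutators).

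Finally I would assemble the count. From the tree factorization $u=\prod_{j=1}^k e^{ih_j}$ becomes, up to $e^{ic}$ with $c\sim_{\Tr}0$ small, a product $\prod_{j=1}^k\prod_{i=0}^m e^{ih_{j,i}^{(n)}}$ of $(m+1)$ blocks of length $k$. To merge the $k$ copies of the $(m+1)$-fold products into $(m+1)$ long products (one per branch) I would apply Lemma~\ref{groupcommutators} iteratively; collapsing $k$ factors of an $(m+1)$-tuple down to $(m+1)$ factors costs $\lfloor(m+1)/2\rfloor(k-1)$ commutators by a recursive pairing matching the tree structure (this is the source of the $\lfloor(m+1)/2\rfloor(k-1)$ term, generalizing the $k-1$ from the $m=1$ case). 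Then each of the $m+1$ resulting branch-products is $2k$ commutators up to vanishing tracial error, giving $2(m+1)k$ commutators, for a total of $M=2(m+1)k+\lfloor(m+1)/2\rfloor(k-1)$. The main obstacle, and the only place real care is needed, is the tree factorization $e^{ih}=\prod_{j=0}^m e^{i\phi_j\psi_j(h)}\cdot e^{ic}$: one must verify that the iterated application of Lemma~\ref{lem:almostcommute} down the tree accumulates only controlled tracial errors (each error is $\sim_{\Tr}0$ and norm-small, and there are boundedly many of them), and that the bound $M$ obtained by the recursive Lemma~\ref{groupcommutators} bookkeeping indeed matches $\lfloor(m+1)/2\rfloor(k-1)$ rather than something larger; both are routine but need to be spelled out.
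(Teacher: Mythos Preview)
Your proposal is correct and follows essentially the same route as the paper's proof. Two small clarifications are worth making. First, the factorization $e^{ih_j}=\prod_{l=0}^m e^{ih_j e_l}$ holds \emph{exactly} in $A_\infty$, not merely up to a tracial error: since each $e_l\in A'\cap A_\infty$, the elements $h_j\sum_{l\in\Lambda_0}e_l$ and $h_j\sum_{l\in\Lambda_1}e_l$ genuinely commute whenever $\sum_{l\in\Lambda_0}e_l$ and $\sum_{l\in\Lambda_1}e_l$ do, so descending the tree gives an exact splitting of $e^{ih_j}$ and Lemma~\ref{lem:almostcommute} is not needed at this stage. Second, the rearrangement cost $\lfloor(m+1)/2\rfloor(k-1)$ in the paper comes from a \emph{snake} (boustrophedon) extension of Lemma~\ref{groupcommutators},
\[
\prod_{j=1}^k\prod_{l=0}^m a_{j,l}=\prod_{j=1}^k a_{j,0}\cdot\prod_{j=0}^{k-1}a_{k-j,1}\cdot\prod_{j=1}^k a_{j,2}\cdots\ \cdot\prod_{i=1}^{\lfloor(m+1)/2\rfloor(k-1)}(u_i,v_i),
\]
which is a purely group-theoretic identity having nothing to do with the binary tree; your phrase ``recursive pairing matching the tree structure'' conflates the two mechanisms. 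The tree is used only for the exponential factorization, while the commutator count for the column rearrangement is obtained by iterating Lemma~\ref{groupcommutators} with the columns taken in alternating directions.
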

 \begin{proof}
 Let $h_{j,l}=e_lh_j$ for $j=1,\dots,k$ and $l=0,\dots,m$. By the recursive commutation relations of the $e_l$'s, we have
$ e^{ih_j}=\prod_{l=0}^me^{ih_{j,l}}$. So
 \begin{align}\label{prodprodexp}
 \prod_{j=1}^k e^{ih_j}=\prod_{j=1}^k \prod_{l=0}^m e^{ih_{j,l}}.
 \end{align}
 We use the following extension of Lemma \ref{groupcommutators}:
 \[
 \prod_{j=1}^k\prod_{l=0}^m a_{j,l} =\prod_{j=1}^ka_{j,0}\prod_{j=0}^{k-1} a_{k-j,1}\prod_{j=1}^ka_{j,2}\cdots \prod_{i=1}^{\lfloor \frac {m+1} 2\rfloor(k-1)} (u_i,v_i).
 \]
 Thus, applying this formula to \eqref{prodprodexp}, 
 \[
 \prod_{j=1}^k e^{ih_j}= \prod_{j=1}^k e^{ih_{j,0}}\prod_{j=0}^{k-1} e^{ih_{k-j,1}}\cdots\prod_{i=1}^{\lfloor \frac {m+1} 2\rfloor(k-1)} (u_i,v_i)
 \]
 But for each $l=0,\dots,m$  the product of exponentials $\prod_{j=1}^k e^{ih_{j,l}}$ is expressible as a product of $2k$ commutators (as argued in the proof of Theorem \ref{thm:prodexp}). This shows that
 $\prod_{j=1}^k e^{ih_j}$ is a product of $M=2(m+1)k+\lfloor \frac {m+1} 2\rfloor(k-1)$ commutators in $A_\infty$. Furthermore, arguing as in the proof of Theorem \ref{thm:prodexp}, it is possible to lift these commutators so that the same expression holds in $A$, up to a small multiplicative error of the form $e^{ic}$, with $c\sim_{\Tr}0$. The details are left to the reader.
 \end{proof}

\section{C*-subalgebra of nuclear dimension one in a simple pure C*-algebra of stable rank one}\label{subalgebra}
In this section, we rely on  classification results using the Cuntz semigroup functor to establish the following theorem:
\begin{theorem}\label{embedding}
Let $A$ be a simple,  $\sigma$-unital, pure, C*-algebra of stable rank one. Then there exists a C*-subalgebra $B\subseteq A$ with the following properties:
\begin{enumerate}[(i)]
\item
$B\cong C\otimes \mathcal R$, where $C$ is a simple AF C*-algebra and $\mathcal R$ is the Jacelon-Razak C*-algebra (see \cite{jacelon}).
\item
Every lower semicontinuous trace on $B$ extends uniquely to a lower semicontinuous 2-quasitrace on $A$. Furthermore, for bounded traces the norm is preserved.
\item
Every selfadjoint $h\in A_{\sa}$ with connected spectrum is approximately unitarily equivalent to some $h'\in B_{\sa}$.
\end{enumerate}
\end{theorem}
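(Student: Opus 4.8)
The plan is to realize $B$ as the image of an injective $*$-homomorphism $\Phi\colon C\otimes\mathcal R\to A$, where $C$ is a carefully chosen simple AF C*-algebra, and to construct $\Phi$ by means of the classification of $*$-homomorphisms from inductive limits of $1$-dimensional NCCW complexes obtained in \cite{nccw}. For this to be available, first observe that $C\otimes\mathcal R$ is itself an inductive limit of $1$-dimensional NCCW complexes (a finite-dimensional algebra tensored with a Razak building block is again one such complex), is simple, and has vanishing $K_1$ (indeed $K_*(C\otimes\mathcal R)=0$ by the K\"unneth theorem, since $K_*(\mathcal R)=0$); moreover $C\otimes\mathcal R$ is stably projectionless (as $\mathcal R$ is) and has stable rank one. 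By \cite{nccw}, $*$-homomorphisms $C\otimes\mathcal R\to A$ are classified up to approximate unitary equivalence by $\mathrm{Cu}$-morphisms $\mathrm{Cu}(C\otimes\mathcal R)\to\mathrm{Cu}(A)$, the target having stable rank one. A nonzero such $*$-homomorphism is automatically injective because $C\otimes\mathcal R$ is simple, and its image is full in $A$ because $A$ is simple. So everything reduces to exhibiting the right injective $\mathrm{Cu}$-morphism and then verifying (i)--(iii). (We assume $A$ separable; only cosmetic changes are needed for the general $\sigma$-unital case.)

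To build the $\mathrm{Cu}$-morphism, I would use the structure of the Cuntz semigroup of a simple, separable, pure C*-algebra of stable rank one: $\mathrm{Cu}(A)$ decomposes into its compact elements, which form the usual semigroup $V(A)$ of projections over $A$, together with its soft (non-compact) elements, and the latter are order-isomorphic to $\mathrm{LAff}_{++}(\widetilde{QT}(A))$, the lower semicontinuous affine functions on the cone $\widetilde{QT}(A)$ of lower semicontinuous $2$-quasitraces of $A$ that are strictly positive away from $0$; this uses that almost unperforation gives strict comparison and that almost divisibility makes every such affine function attainable. Next, by Blackadar-type realization of trace cones, choose a simple AF C*-algebra $C$ with $\widetilde T(C)$ isomorphic, as a topological cone, to $\widetilde{QT}(A)$. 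Since $\mathcal R$ is monotracial, nuclear, and stably projectionless, $\widetilde T(C\otimes\mathcal R)\cong\widetilde T(C)\cong\widetilde{QT}(A)$ and $\mathrm{Cu}(C\otimes\mathcal R)$ has no nonzero compact part, so $\mathrm{Cu}(C\otimes\mathcal R)\cong\{0\}\cup\mathrm{LAff}_{++}(\widetilde{QT}(A))$. Transporting along these identifications produces an order-isomorphism $\alpha$ from $\mathrm{Cu}(C\otimes\mathcal R)$ onto $\{0\}\cup\{\text{soft elements of }\mathrm{Cu}(A)\}$. One checks that this set is a sub-$\mathrm{Cu}$-semigroup of $\mathrm{Cu}(A)$ (suprema of increasing sequences of soft elements are soft, and $\ll$ among soft elements is the same as in $\mathrm{Cu}(A)$), so $\alpha$ is an injective $\mathrm{Cu}$-morphism; note that $\alpha$ carries the largest element of $\mathrm{Cu}(C\otimes\mathcal R)$ to the largest element $\infty_A$ of $\mathrm{Cu}(A)$. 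Feeding $\alpha$ into \cite{nccw} yields $\Phi$, and we set $B:=\Phi(C\otimes\mathcal R)$.

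It remains to verify the three properties. Property (i) holds by construction. For (ii): restriction of a lower semicontinuous $2$-quasitrace of $A$ to $B$ is a lower semicontinuous $2$-quasitrace of $B$, hence a trace since $B$ is nuclear; dually to $\mathrm{Cu}(\Phi)=\alpha$, this restriction is precomposition of functionals on $\mathrm{Cu}(A)$ with $\alpha$. Because almost divisibility forces every compact element of $\mathrm{Cu}(A)$ to be the supremum of the soft elements way below it, a functional on $\mathrm{Cu}(A)$ is determined by its restriction to the soft part, and conversely any functional on the soft part extends; via the identification of functionals on the Cuntz semigroup with lower semicontinuous $2$-quasitraces, this gives the desired bijection between the lower semicontinuous traces of $B$ and the lower semicontinuous $2$-quasitraces of $A$, i.e.\ the unique extension. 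For bounded traces, $\|\tau\|$ equals the value $d_\tau(\infty_B)$ of the associated dimension functional at the class $\infty_B$ of a strictly positive element of $B$; since $\Phi$ is full, $\infty_B$ maps to $\infty_A$, and $\|\bar\tau\|=d_{\bar\tau}(\infty_A)$, whence $\|\bar\tau\|=\|\tau\|$. For (iii): let $h\in A_{\sa}$ have connected spectrum; writing $\mathrm{sp}(h)$ as a compact interval, we may assume $0\in\mathrm{sp}(h)$ (automatic when $A$ is non-unital). Then $C^*(h)\cong C_0(\mathrm{sp}(h)\setminus\{0\})$ is a $1$-dimensional NCCW complex with trivial $K_1$, and the $\mathrm{Cu}$-morphism induced by the inclusion $\iota_h\colon C^*(h)\hookrightarrow A$ sends each generator $[(h-t)_+]$ and $[(t-h)_+]$ either to $0$, to $\infty_A$, or to a soft element of $\mathrm{Cu}(A)$ (for $t$ interior to $\mathrm{sp}(h)$, the element $(h-t)_+$ has connected spectrum containing $0$, so its class is soft); thus this morphism takes values in $\{0\}\cup\{\text{soft elements}\}=\alpha(\mathrm{Cu}(C\otimes\mathcal R))$. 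It therefore factors as $\mathrm{Cu}(\Phi)\circ\gamma$ for a $\mathrm{Cu}$-morphism $\gamma\colon\mathrm{Cu}(C^*(h))\to\mathrm{Cu}(B)$, and $\gamma$ kills no nonzero class since $A$ is simple. Realizing $\gamma$ by a $*$-homomorphism $\rho\colon C^*(h)\to B$ via \cite{nccw} and putting $h':=\rho(h)\in B_{\sa}$, we get $\mathrm{sp}(h')=\mathrm{sp}(h)$ and $\mathrm{Cu}(\iota_B\circ\rho)=\mathrm{Cu}(\Phi)\circ\gamma=\mathrm{Cu}(\iota_h)$, where $\iota_B\colon B\hookrightarrow A$ is the inclusion; by the uniqueness half of \cite{nccw}, $\iota_B\circ\rho$ and $\iota_h$ are approximately unitarily equivalent, so $h$ is approximately unitarily equivalent to $h'\in B_{\sa}$.

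The crux of the argument — the step I expect to require the most care — is the second paragraph: pinning down the structure of $\mathrm{Cu}(A)$ for simple pure C*-algebras of stable rank one (the identification of the soft part with $\mathrm{LAff}_{++}$ of the quasitrace cone), matching it with $\mathrm{Cu}(C\otimes\mathcal R)$, and verifying that the resulting bijection $\alpha$ is an honest morphism in the category $\mathrm{Cu}$ whose range is large enough to support the verifications of (ii) and (iii). Once this is in place, the two invocations of \cite{nccw} and the remaining bookkeeping are comparatively routine.
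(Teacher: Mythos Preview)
Your overall strategy coincides with the paper's: pick a simple AF algebra $C$ with $\mathrm T(C)\cong\mathrm{QT}_2(A)$ via Blackadar, set $B=C\otimes\mathcal R$, identify $\Cu(B)$ with the non-compact part $\Cu_{\mathrm{nc}}(A)$, lift this to an embedding using \cite{nccw}, deduce (ii) by duality, and obtain (iii) by factoring the $\Cu$-map of the inclusion $C^*(h)\hookrightarrow A$ through $\Cu(B)$ and invoking classification once more (the paper cites \cite{ciuperca-elliott-santiago} rather than \cite{nccw} for this last step, but either works here).

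There is one genuine gap, concerning the class of a strictly positive element. To apply \cite{nccw} you must send the class $[e_B]$ of a strictly positive element of $B$ to something $\leq[e_A]$ in $\Cu(A)$; your construction of $\alpha$ as an abstract order-isomorphism onto the soft part does not control where $[e_B]$ lands, and matching the top elements $\infty$ of the two Cuntz semigroups is not the same thing. The issue resurfaces in your norm-preservation argument for (ii): you conflate the maximal element of $\Cu(B)$ with the class of a strictly positive element, and in any case fullness of $\Phi$ does \emph{not} force $[e_B]\mapsto[e_A]$ (a full embedding of a simple algebra into a simple $A$ can land in a proper hereditary subalgebra), so $\|\bar\tau\|=\|\tau\|$ does not follow from what you wrote. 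The paper handles both points at once: it first fixes the target $[s_A]\in\Cu_{\mathrm{nc}}(A)$ (equal to $[e_A]$ when $A$ is non-unital, and to the largest non-compact element below $[1_A]$ when $A$ is unital, so that $\widehat{[s_A]}=\widehat{[e_A]}$ as functions on $\mathrm{QT}_2(A)$), and then replaces $C\otimes\mathcal R$ by a stably isomorphic algebra whose strictly positive element hits $[s_A]$ under the $\Cu$-isomorphism. With that adjustment inserted, your argument goes through. (A minor further point: \cite{nccw} is phrased in terms of $\Cu^\sim$ rather than $\Cu$; since $K_0(B)=0$ you should remark that your $\Cu$-morphism extends to a $\Cu^\sim$-morphism.)
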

Before proving the theorem, we make some preliminary remarks on the Cuntz semigroup of pure C*-algebras.
The reader is referred to \cite{ara-perera-toms} for the fundamental facts on the Cuntz semigroup of a C*-algebra.

Let  $A$ be a C*-algebra
satisfying the hypotheses of Theorem \ref{embedding}.
The Cuntz semigroup of $A$ decomposes into two subsemigroups:
\begin{enumerate}
\item
The set $\Cu_{\mathrm c}(A)$ of non-zero compact elements  (i.e., $[a]\in \Cu(A)\backslash\{0\}$ such that $[a]\ll [a]$) is a subsemigroup of $\Cu(A)$.
\item
The complement of $\Cu_{\mathrm c}(A)$ is also a subsemigroup which we denote by $\Cu_{\mathrm{nc}}(A)$.
\end{enumerate}
Since $A$ is stably finite,  $\Cu_{\mathrm c}(A)$ is isomorphic to  the semigroup  $\mathrm V^*(A)$  of non-zero Murray-von Neumann equivalence classes of projections (by \cite{brown-ciuperca}). On the other hand, by \cite[Theorem 5.6]{antoine-bosa-perera},  $\Cu_{\mathrm{nc}}(A)$ is isomorphic to $\mathrm{LAff}(\mathrm{QT}_2(A))$,
where $\mathrm{QT_2(A)}$ is the cone of densely finite lower semicontinuous 2-quasitraces on $A$ and $\mathrm{LAff}(\mathrm{QT}_2(A))$ is a suitable space of affine lower semicontinuous functions on $\mathrm{QT}_2(A)$.
We thus have
\begin{align*}
\Cu(A)&=\Cu_c(A)\sqcup \Cu_{\mathrm{nc}}(A)\\
&\cong \mathrm V^*(A)\sqcup \mathrm {LAff}(\mathrm{QT}_2(A)),
\end{align*}
where $\mathrm V^*(A)\sqcup \mathrm {LAff}(\mathrm{QT}_2(A))$ is endowed with a suitable order and addition (see \cite[Theorem 5.6]{antoine-bosa-perera}).

In \cite{nccw},
 the ordered semigroup $\Cu^\sim(A)$ is introduced and used as a classification invariant. For $A$ as before, it is shown in \cite[Section 6]{nccw} that
\[
\Cu^\sim(A)\cong \mathrm K_0(A)\sqcup \mathrm{LAff}^\sim(\mathrm{QT}_2(A)),
\]
where again $\mathrm{LAff}^\sim(\mathrm{QT}_2(A))$ is a suitable ordered semigroup
of lower semicontinuous functions on  $\mathrm{QT}_2(A)$. The reader is referred to
\cite{nccw} for its definition. We point out here that $\mathrm{LAff}(\mathrm{QT}_2(A))$ is the positive cone of $\mathrm{LAff}^\sim(\mathrm{QT}_2(A))$ and that $\mathrm{QT}_2(A)$ can be recovered from  $\mathrm{LAff}(\mathrm{QT}_2(A))$ (and thus from $\mathrm{LAff}^\sim(\mathrm{QT}_2(A))$) as the cone of positive functionals.

\begin{proof}[Proof of Theorem \ref{embedding}]
By \cite[Theorem 3.10]{blackadar}, there exists an AF C*-algebra $C$ such that $\mathrm{T}(C)\cong \mathrm{QT}_2(A)$. Tensoring with the Jacelon-Razak C*-algebra $\mathcal R$, we get a C*-algebra $B= C\otimes \mathcal R$ such that $\mathrm{K}_0(B)=0$ and
\[
\mathrm{T}(B)\cong \mathrm T(C)\cong \mathrm{QT}_2(A).
\]
 Thus,
\[
\Cu^\sim(B)\cong \mathrm{LAff}^\sim(\mathrm T(B))\cong \mathrm{LAff}^\sim(\mathrm{QT}_2(A)).
\]
(Up to this point, we have borrowed ideas from  \cite[Proposition 5.3]{jacelon} and \cite[Corollary 5.4]{jacelon}.)

Let $e_A\in A_+$ be a strictly positive element. Let $s_A\in A_+$ be such that
if $A$ is unital then $[s_A]$ is the maximal element strictly less than $[e_A]$, and if $A$
is non-unital then $[s_A]=[e_A]$. Notice that in either case $[s_A]\in \Cu_{\mathrm{nc}}(A)$. Let us choose $[s_B]\in \Cu(B)$ that is mapped to $[s_A]$ by the isomorphism $\Cu(B)\cong \Cu_{nc}(A)$. Passing to a stably isomorphic algebra if necessary, let us assume that $s_B\in B_+$ is a strictly positive element of $B$. We thus get an embedding $\Cu^\sim(B)\stackrel{\alpha}{\longrightarrow} \Cu^\sim(A)$ mapping $[s_B]$ to $[s_A]$.
Now observe that the C*-algebra $B$ is covered by the classification result of \cite{nccw}, since it is expressible as an inductive limit of 1-dimensional NCCW-complexes with trivial $K_1$-group. Thus, there exists a homomorphism $\phi\colon B\to A$ such that, after applying the functor $\Cu^\sim$, gives rise to $\alpha$. Since $\phi$ is necessarily an embedding, we will regard $B$ as a subalgebra of $A$.

It is clear, by construction, that $B$ satisfies (i). That it also satisfies (ii) follows from the fact that the inclusion $B\hookrightarrow A$ induces an isomorphism of
$\mathrm L(\mathrm {QT}_2(B))$
with $\mathrm L(\mathrm {QT}_2(A))$, whence, by duality,  of $\mathrm {QT}_2(A)$ with $\mathrm {QT}_2(B)=\mathrm {T}(B)$.

To prove (iii) we follow essentially the argument given in \cite{robert-commutators}: Say $h\in A_{\sa}$ has connected spectrum. Multiplying $h$ by a scalar if necessary, let us assume that $\|h\|\leq 1$.
Let $\psi\colon C_0(0,1]\oplus C_0(0,1]\to A$ be the homomorphism such that $\psi(t\oplus 0)=h_+$ and $\psi(0\oplus t)=h_-$. Since the spectrum of $h$ is connected, the map
$\Cu(\psi)$ ranges in $\Cu_{\mathrm{nc}}(A)$. Thus, it can be factored through $\Cu(B)$.
By the classification theorem \cite[Theorem 1]{ciuperca-elliott-santiago}, there exists
$\psi'\colon C_0(0,1]\oplus C_0(0,1]\to B$ such that $\Cu(\psi')=\Cu(\psi)$. Furthermore,
$\psi'$ and $\psi$ are approximately unitarily equivalent. Thus, $h':=\psi'(t\oplus -t)$
is approximately unitarily equivalent to $h$.
\end{proof}

\section{Approximation by commutators}
In this section we prove Theorems \ref{approximateversionI} and   \ref{approximateversionII}  (they are used in the proof of Theorem \ref{manyexp} from the introduction).

\begin{lemma}\label{splitlemma}
Let  $A$ be a simple  non-type I  C*-algebra.
Let $h\in A_{\sa}$. Then for each $\epsilon>0$ there exist
$\tilde h\in A_{\sa}$ and $x\in A$ such that
\begin{enumerate}[(i)]
\item
$h\approx_\epsilon \tilde h +[x^*,x]$,
\item
$\tilde h$ has connected spectrum,
\item
$\tilde h$ and $[x^*,x]$ commute, $x^2=0$,   and $\|\tilde h\|,\|x\|\leq \|h\|$.
\end{enumerate}
\end{lemma}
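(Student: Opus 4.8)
The plan is to reduce the problem to a finite-dimensional matrix computation using the non-type-I hypothesis, which guarantees a rich supply of nilpotent elements. First I would fix $\epsilon>0$ and use simplicity together with the fact that $A$ is not type I: by a theorem of Glimm-type (or by the now-standard construction of approximately central sequences of order-zero copies of matrix algebras), $A$ contains, for any finite set $F\subseteq A$ and any tolerance, a copy of $M_n$ (for $n$ as large as we like) that almost commutes with $F$; more precisely one obtains a c.p.c.\ order-zero map $\mu\colon M_n\to A$ whose image almost commutes with $h$. Alternatively, since we only need a $2\times 2$ block structure, I would look for a positive element $e\in A_+$ with $e$ equivalent to a proper subelement of itself (possible in any simple, non-type-I, stably finite situation, or more generally by non-type-I-ness there is an element $a$ with $a^2=0$ and $\|a\|$ controlled) and use it to "push" a chunk of the spectrum of $h$ around.

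The core of the argument is the following $2\times 2$ move, which mirrors the diagonalization step in Lemma~\ref{ultrafinite} and in \cite[Lemma 5.3]{dlHarpe-Skandalis2}: given a selfadjoint $h$, one wants to split off a commutator $[x^*,x]$ with $x^2=0$ so that what remains, $\tilde h = h-[x^*,x]$ (up to $\epsilon$), has connected spectrum, i.e.\ no gap. A matrix $\mathrm{diag}(\lambda,\mu)$ differs from $\mathrm{diag}(\frac{\lambda+\mu}{2},\frac{\lambda+\mu}{2})$ by a matrix of the form $[x^*,x]$ with $x^2=0$ (the $2\times2$ case quoted in Lemma~\ref{ultrafinite}), and this move closes the gap between $\lambda$ and $\mu$. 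So the strategy is: approximate $h$ by an element whose spectrum is a finite set, realize that element (up to $\epsilon$) inside a matrix-subalgebra-like structure sitting almost centrally in $A$, and then repeatedly apply the $2\times 2$ gap-closing move to merge the distinct eigenvalues into a single averaged value on each block, thereby collapsing the spectrum to something connected (an interval or a point). The commutation requirement in (iii) — that $\tilde h$ commute with $[x^*,x]$, and that $x^2=0$, $\|\tilde h\|,\|x\|\le\|h\|$ — is automatically satisfied by this construction because the move is carried out blockwise inside a finite-dimensional algebra where the summands are simultaneously diagonal; the norm bounds come from the fact that averaging eigenvalues and the explicit $2\times2$ formula for $x$ (as in Lemma~\ref{ultrafinite}(iii), $\|x\|\le\|h\|^{1/2}$, and here even the stronger $\|x\|\le\|h\|$) do not increase norms.

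I expect the main obstacle to be the transition from the approximate (almost-central matrix subalgebra, exact relations only in $A_\infty$) setting to an \emph{exact} statement with the stated norm bounds. Concretely: the almost-central embedding of $M_n$ only gives $h\approx_\epsilon h' + h''$ where $h'$ lies in (the image of) the matrix algebra and $h''$ is small, and the order-zero map's failure to be multiplicative means the element we extract as "$[x^*,x]$" is only approximately a commutator of a square-zero element. The fix is the standard one used throughout this paper: pass to $A_\infty$ (or $\prod A/\bigoplus A$) where the order-zero map splits off an exact homomorphism $\pi^\phi$ on $M_n\otimes C_0(0,1]$, perform the construction there to get exact relations $x^2=0$, $[\tilde h, x^*x]=0$, $h = \tilde h+[x^*,x]$ with $\tilde h$ of connected (in fact finite, then collapsed) spectrum, and then lift: the relations $x^2=0$ and the norm inequalities lift to $\prod_{n=1}^\infty A$, and "$\tilde h$ has connected spectrum" is replaced in the lifted picture by "$\tilde h$ has spectrum in an $\epsilon$-neighborhood of an interval," which after a small functional-calculus perturbation (absorbed into the $\epsilon$ in (i)) genuinely has connected spectrum. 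One must also check that simplicity and non-type-I are exactly what is needed to run the almost-central matrix argument — this is where \cite[Proposition 2.6]{winter-zacharias}-style separabilization plus the standard characterization of non-type-I C*-algebras (existence of a suitable sequence of nilpotents, cf.\ the use of $M_2(C_0(0,\|x\|])$ in Lemma~\ref{lem:x2}) come in — and that the whole construction can be done with a \emph{single} $x$ (rather than a product), which is why one wants the averaged element to have a genuinely connected spectrum in one step, achieved by choosing the block decomposition so that all eigenvalues are merged at once via a telescoping of $2\times2$ moves, as in the $\mu_k$ bookkeeping of Lemma~\ref{ultrafinite}.
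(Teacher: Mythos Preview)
Your approach has a genuine gap. The central step relies on finding, in an arbitrary simple non-type-I C*-algebra, an almost-central copy of $M_n$ (or a c.p.c.\ order-zero map $\mu\colon M_n\to A$ whose image almost commutes with $h$). That is much stronger than what the hypotheses give you: approximate centrality of matrix units is a regularity property tied to things like $\mathcal Z$-stability or finite nuclear dimension, and fails in general simple non-type-I C*-algebras. Even granting such a structure, it is unclear how to ``realize $h$ inside'' it: $h$ is an arbitrary selfadjoint, not something in the range of your order-zero map, and functional calculus on $h$ gives spectral bumps in $C^*(h)$, not in an ambient matrix block. A secondary problem is that your $2\times 2$ averaging move, iterated, expresses a finite-spectrum selfadjoint as a scalar plus a \emph{sum} of commutators $[x_i^*,x_i]$ with $x_i^2=0$; extracting a single $x$ is exactly the difficulty (cf.\ Lemma~\ref{ultrafinite}, which needs two), and ``telescoping'' does not resolve it.

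The paper's argument avoids all of this and is elementary. Write $h=a-b$ with $a=h_+$, $b=h_-$, say $\|a\|\geq\|b\|$. A small functional-calculus perturbation arranges $ae=\|a\|e$ and $bf=\|b\|f$ for some nonzero $e,f\in A_+$. Simplicity gives a nonzero $e'\in\her(e)_+$ that is Cuntz-below $f$; non-type-I gives $e''\in\her(e')_+$ with spectrum $[0,\|a\|]$. Choose $x$ with $x^*x=e''$ and $xx^*\in\her(f)$, so $x^2=0$, and set $\tilde h=h-[x^*,x]$. Since $a$ acts as the scalar $\|a\|$ on $\her(e)$ and $b$ as $\|b\|$ on $\her(f)$, the pairs $(a,x^*x)$ and $(b,xx^*)$ commute; a direct Gelfand-type computation then yields $\sigma(a-x^*x)=[0,\|a\|]$ and $\sigma(b-xx^*)=[\|b\|-\|a\|,\|b\|]$, whence $\sigma(\tilde h)=[-\|b\|,\|a\|]$ is an interval. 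The two hypotheses are each used exactly once, in their weakest form: simplicity to transport between hereditary corners, and non-type-I to produce a positive element of continuous spectrum inside a given nonzero corner. No $A_\infty$, no lifting, no almost-centrality is needed. Your brief ``alternatively'' remark about pushing a chunk of the spectrum around is in fact much closer to the actual mechanism than the main line you pursued.
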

\begin{proof}
Let $a=h_+$ and $b=h_-$. Let us assume  that $\|a\|\geq \|b\|$ (the case $\|a\|<\|b\|$ is dealt with similarly). We first consider the case when $b\neq 0$.
Using functional calculus, we can slightly perturb $a$  (within any specified error and without changing its norm) so that $ae=\|a\|e$ for some non-zero $e\in A_+$. So let us assume that such an $e$ exists. Similarly, let us assume that $bf=\|b\|f$ for some non-zero $f\in A_+$. Since $A$ is simple,  we can find a non-zero $e'\in \her(e)_+$ such that
 $e'=y^*y$ and $yy^*\in\her(f)$ for some $y\in A$. Since $A$ is also non-type I, we can find  $e''\in \her(e')_+$
with  spectrum $[0,\|a\|]$.
Let $x\in A$ be such that $e''=x^*x$ and $xx^*\in \her(f)$. Set $\tilde h=h-[x^*,x]$.

Let us show that $\tilde h$ and $x$ have the desired properties. Properties (i) and (iii)
are clearly true. Let us prove (ii).
Since $ax^*x=\|a\|\cdot x^*x$ and the spectrum of $x^*x$ is $[0,\|a\|]$, the spectrum of
$a-x^*x$ is  $[0,\|a\|]$. Similarly, since $bxx^*=\|b\|\cdot xx^*$ and the spectrum of $xx^*$ is
$[0,\|a\|]$, the spectrum of  $b-xx^*$ is   $[\|b\|-\|a\|,\|b\|]$. Finally, the spectrum of $\tilde h=(a-x^*x)-(b-xx^*)$  is  $[0,\|a\|]\cup [-\|b\|,\|a\|-\|b\|]=[-\|b\|,\|a\|]$.

Let us sketch briefly how the above argument can be adapted in the case that $b=0$: As before, we assume that $ae=\|a\|e$ for some non-zero $e\in A_+$.
Next we find $x\in \her(e)$ such that $x^*x$ has spectrum $[0,\|a\|]$ and $x^2=0$. Finally, we set $\tilde h=h-[x^*,x]$.
\end{proof}

\begin{theorem}\label{approximateversionI}
Let $A$ be a separable,  simple, pure C*-algebra with stable rank one  and such that every  2-quasitracial state on $A$ is a trace. Let $h\in A_{\sa}$, with $h\sim_{\Tr} 0$ and $\|h\|<\frac \pi 2$.
Then for each $\epsilon>0$ there exist unitaries $v_j,w_j\in \U^0(A)$, with $j=1,2,3,4,5$ and
$c\in A_{\sa}$ such that
\[
e^{ih}=\prod_{j=1}^5(v_j,w_j)\cdot e^{ic},
\]
where $c\sim_{\Tr}0$, $\|c\|<\epsilon$, and $\|1-v_j\|,\|1-w_j\|\leq \|e^{ih}-1\|^{\frac 1 2}$ for all $j$.
\end{theorem}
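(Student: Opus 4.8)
The plan is to reduce the statement about a single exponential $e^{ih}$ in the pure C*-algebra $A$ to the corresponding statement in the subalgebra $B\subseteq A$ produced by Theorem~\ref{embedding}, where $B$ has nuclear dimension at most one, and then apply Theorem~\ref{nuc1exp}. First I would use Lemma~\ref{splitlemma}: since $A$ is simple and non-type~I (it is pure, stably finite and infinite-dimensional, hence non-type~I), for a given small $\epsilon_0>0$ we may write $h\approx_{\epsilon_0}\tilde h+[x^*,x]$ with $\tilde h$ of connected spectrum, $x^2=0$, $\tilde h$ and $[x^*,x]$ commuting, and $\|\tilde h\|,\|x\|\le\|h\|<\tfrac\pi2$. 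Because $\tilde h$ and $[x^*,x]$ commute, Lemma~\ref{lem:almostcommute} (or rather direct functional calculus, since they genuinely commute) gives $e^{ih}=e^{i\tilde h}e^{i[x^*,x]}e^{ic_0}$ with $c_0\sim_{\Tr}0$ and $\|c_0\|$ as small as we wish; Lemma~\ref{lem:x2}(ii) handles $e^{i[x^*,x]}$ as a single commutator $(v,w)$ with $\|1-v\|,\|1-w\|\le\|e^{ix^*x}-1\|^{1/2}\le\|e^{ih}-1\|^{1/2}$ (using $\|x\|<\sqrt{\pi/2}$, which follows from $\|x\|\le\|h\|<\pi/2$, so $\|x\|^2<\pi^2/4$; one may need the sharper $\|h\|<\sqrt{\pi/2}$ here, or instead re-split to keep $\|x\|$ small — a point to watch). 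This consumes one commutator and leaves $e^{i\tilde h}$, where $\tilde h$ has connected spectrum, to be written as four commutators.

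Next I would push $e^{i\tilde h}$ into $B$. By Theorem~\ref{embedding}(iii), $\tilde h$ is approximately unitarily equivalent to some $\tilde h'\in B_{\sa}$; conjugation by a unitary does not change whether something is a product of commutators and preserves the determinant, and by the second lemma after Lemma~\ref{lem:almostcommute} a sufficiently close approximation changes $e^{i\tilde h}$ only by a factor $e^{ic_1}$ with $c_1\sim_{\Tr}0$ of small norm. We also want $\|\tilde h'\|<\tfrac\pi2$ and $\|e^{i\tilde h'}-1\|\le\|e^{ih}-1\|^{1/2}$-type control; since approximate unitary equivalence preserves the spectrum of $\tilde h'$, we can arrange $\|\tilde h'\|=\|\tilde h\|\le\|h\|<\tfrac\pi2$ and that $e^{i\tilde h'}$ is close to $e^{i\tilde h}$, which is close to $e^{ih}$. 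Crucially we need $\tilde h'\sim_{\Tr}0$ inside $B$: here Theorem~\ref{embedding}(ii) is the key input — the trace space of $B$ is identified (via restriction) with the 2-quasitrace cone of $A$, and every 2-quasitracial state of $A$ is a trace, so $\Tr_A(\tilde h)=0$ forces every trace of $B$ to vanish on $\tilde h'$, i.e.\ $\tilde h'\sim_{\Tr}0$ in $B$ by Cuntz--Pedersen. (One must be slightly careful that $\tilde h'$, not just $h$, is traceless; since $\tilde h=h-[x^*,x]$ we have $\tilde h\sim_{\Tr}h\sim_{\Tr}0$ in $A$, and approximate unitary equivalence preserves the universal trace, so this is fine, modulo the small perturbations which we track as $c\sim_{\Tr}0$ corrections.)

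Now $B$ has nuclear dimension at most one (it is an inductive limit of one-dimensional NCCW complexes, by Theorem~\ref{embedding}(i), and such limits have nuclear dimension $\le1$), $\tilde h'\in B_{\sa}$ satisfies $\tilde h'\sim_{\Tr}0$ in $B$ and $\|\tilde h'\|<\tfrac\pi2$. I would apply Theorem~\ref{nuc1exp}(ii) to get $v_j',w_j'\in\U^0(B)$, $j=1,2,3,4$, and $c_2\in B_{\sa}$ with $c_2\sim_{\Tr}0$ in $B$, $\|c_2\|$ small, $e^{i\tilde h'}=\prod_{j=1}^4(v_j',w_j')\cdot e^{ic_2}$ and $\|1-v_j'\|,\|1-w_j'\|\le\|e^{i\tilde h'}-1\|^{1/2}$. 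These unitaries and $c_2$, regarded inside $A$, still satisfy all the required norm bounds (with $\|e^{i\tilde h'}-1\|$ replaced by something $\le\|e^{ih}-1\|^{1/2}$ after the earlier approximations, so the final bound $\|1-v_j\|,\|1-w_j\|\le\|e^{ih}-1\|^{1/2}$ comes out — this requires choosing $\epsilon_0$ and the approximation parameters small enough that $\|e^{i\tilde h'}-1\|\le\|e^{ih}-1\|$, which is where the $\tfrac12$-exponent in the hypothesis gives room). Also $c_2\sim_{\Tr}0$ in $B$ implies $c_2\sim_{\Tr}0$ in $A$ (a sum of commutators in $B$ is a sum of commutators in $A$). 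Combining: $e^{ih}=(v,w)\prod_{j=1}^4(v_j',w_j')\cdot e^{ic}$ where $c$ collects $c_0,c_1,c_2$ and all the $\sim_{\Tr}0$ correction terms from Lemma~\ref{lem:x2}(ii) and the exponential manipulations; $c\sim_{\Tr}0$ and $\|c\|<\epsilon$ by choosing everything small. Renaming gives five pairs $v_j,w_j$, $j=1,\dots,5$, as desired.

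\textbf{Main obstacle.} The delicate point is the bookkeeping of the norm bounds $\|1-v_j\|,\|1-w_j\|\le\|e^{ih}-1\|^{1/2}$: every approximation step (Lemma~\ref{splitlemma}, approximate unitary equivalence into $B$, the last lemma before Section~3) replaces $e^{ih}$ by something nearby, and I must ensure the accumulated perturbation keeps $\|e^{i(\cdot)}-1\|\le\|e^{ih}-1\|$ so that the square-root bound from Lemmas~\ref{lem:x2}(ii) and \ref{nuc1exp}(ii) survives. The square-root in the hypothesis is exactly what provides the slack: if $\|e^{ih}-1\|=t$, then $\|e^{ih}-1\|^{1/2}=\sqrt t>t$ for small $t$, so there is a definite gap to absorb the errors. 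A second subtle point is making sure the element that gets transported into $B$ is genuinely traceless \emph{in $B$} — this rests entirely on the quasitrace-to-trace identification in Theorem~\ref{embedding}(ii) together with the hypothesis that all 2-quasitracial states of $A$ are traces, and on the fact that $B$ has trivial $K_0$ so there is no $K_0$-ambiguity in the determinant. The rest is routine exponential calculus using Lemma~\ref{lem:almostcommute} and its corollaries.
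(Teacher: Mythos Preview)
Your approach is essentially the paper's: split via Lemma~\ref{splitlemma}, handle $e^{i[x^*,x]}$ as one commutator via Lemma~\ref{lem:x2}(ii), transport the connected-spectrum piece into the subalgebra $B$ of Theorem~\ref{embedding} by approximate unitary equivalence, and then invoke Theorem~\ref{nuc1exp}(ii) inside $B$ for the remaining four commutators. The paper runs the same argument, phrased with a sequence $\epsilon=1/n$ rather than a fixed $\epsilon_0$.

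There is one genuine slip. You write ``since $\tilde h=h-[x^*,x]$ we have $\tilde h\sim_{\Tr}0$ in $A$'', but Lemma~\ref{splitlemma} only gives $h\approx_{\epsilon_0}\tilde h+[x^*,x]$, not equality (the proof first perturbs $h_+$ and $h_-$ so they attain their norms on nonzero elements). Hence $\tilde h$, and therefore its transport $\tilde h'\in B$, is only \emph{approximately} traceless: $|\tau(\tilde h')|<\epsilon_0$ for every tracial state $\tau$ of $B$. You cannot then apply Theorem~\ref{nuc1exp} directly, and the discrepancy is not a ``$c\sim_{\Tr}0$ correction'' as you suggest --- it has nonzero trace. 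The paper fixes this with an explicit extra step: perturb $\tilde h'$ inside $B$ (by something of norm $<\epsilon_0$) to $\tilde h''\in B_{\sa}$ with $\tilde h''\sim_{\Tr}0$ in $B$; this is possible precisely because the trace identification of Theorem~\ref{embedding}(ii) together with the 2-quasitrace hypothesis makes $\tilde h'$ small in $B_q$. Once that perturbation is inserted, all subsequent correction factors are genuinely $\sim_{\Tr}0$ and your argument goes through exactly as in the paper. Your flag about the hypothesis $\|x\|<\sqrt{\pi/2}$ in Lemma~\ref{lem:x2}(ii) versus $\|x\|\le\|h\|<\pi/2$ is a fair observation; the paper simply asserts the norm estimates hold.
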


\begin{proof}
Letting $\epsilon=\frac 1 n$, with $n=1,\dots$, in   the previous lemma, we find  $h_n\in A_{\sa}$ and $x_n\in A$ such that $h_n$
has connected spectrum, $x_n^2=0$, $h_n$ and $[x_n^*,x_n]$ commute, and
\[
\|h_n+[x_n^*,x_n]-h\|<\frac 1 n
\]
for all $n\in \N$. Let $B\subseteq A$ be the sub-C*-algebra whose existence was established in Theorem \ref{embedding}. Then,
 each $h_n$ is approximately unitarily equivalent to some $h_n'\in B$.
We have $|\tau(h_n')|=|\tau(h_n)|\leq \frac 1 n$ for any tracial state $\tau$ on $A$.
Our hypotheses imply that every tracial state on $B$ extends uniquely to a tracial state on $A$. Indeed, by Theorem \ref{embedding} (ii) every  tracial state on $B$ extends to a 2-quasitracial state on $A$, which by assumption must be a trace.
Thus, $|\tau(h_n')|<\frac 1 n$ for every tracial state $\tau$ on $B$. We can then perturb $h_n'$ (up to an error of $\frac 1 n$)  inside of $B$ to get $h_n''\in B_{\sa}$
such that $h_n''\sim_{\Tr}0$ (in $B$). We thus have that
$u_n^*h_n''u_n+[x_n^*,x_n]\to h$, where $u_n\in \U(A)$ are unitaries. By Lemma \ref{lem:almostcommute}  we have
\[
e^{ih}=e^{i[x_n^*,x_n]}u_n^*e^{ih_n''}u_ne^{ic_n},
\]
where $c_n\sim_{\Tr}0$ and $c_n\to 0$. Furthermore, by Lemma \ref{lem:x2},  $e^{i[x_n^*,x_n]}=(v_{n,0},w_{n,0})$ for some $v_{n,0},w_{n,0}\in \U^0(A)$. Thus,
\begin{align}\label{cojugateinB}
e^{ih}=(v_{n,0},w_{n,0})u_n^*e^{ih_n''}u_ne^{ic_n}.
\end{align}
Finally, applying
Theorem \ref{nuc1exp} in the algebra $B$ -- which has nuclear dimension 1 -- we get unitaries $v_{n,j},w_{n,j}\in \U^0(A)$, for $n=1,2,3,4$ such that
\[
e^{ih}=(v_{n,0},w_{n,1})\prod_{j=1}^4(v_{n,j},w_{n,j})e^{ic_n'}.
\]
where $c_n'\sim_{\Tr}0$ and $c_n'\to 0$. Lemmas \ref{splitlemma} and  \ref{lem:x2} and Theorem \ref{nuc1exp} also guarantee that the inequalities $\|v_{n,j}-1\|,\|w_{n,j}-1\|\leq \|e^{ih}-1\|^{\frac 1 2}$ hold for $j=0,1,2,3,4$ and all $n$.
\end{proof}

\begin{theorem}\label{approximateversionII}
Let $A$ be as in Theorem \ref{approximateversionI}.
Let $u\in \U^0(A)$ be of the form $u=\prod_{j=1}^k e^{ih_j}$, where $h_j\in A_{\sa}$ for all $j$, and such that $\Delta_{\Tr}(u)=0$.
Then for each $\epsilon>0$ there exist unitaries $v_j,w_j\in \U^0(A)$, with $j=1,\dots,7k+6$, such that
\[
u=\prod_{j=1}^{7k+6}(v_j,w_j)\cdot e^{ic}
\]
where $c\sim_{\Tr}0$ and $\|c\|<\epsilon$.
\end{theorem}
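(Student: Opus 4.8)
The plan is to reduce $u$---modulo commutators and a small multiplicative error $e^{ic}$ with $c\sim_{\Tr}0$---to a product of exponentials lying inside the C*-subalgebra $B\subseteq A$ provided by Theorem \ref{embedding}, and then to apply Theorem \ref{thm:prodexp} there; recall that $B$, being an inductive limit of $1$-dimensional NCCW complexes, has nuclear dimension at most one.

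The first and most delicate step is to force the exponents to sum to zero under $\Tr$. Since $\Delta_{\Tr}(u)=0$, the element $\tfrac{1}{2\pi}\Tr(\sum_jh_j)$ of $A_q$ equals $\Tr([p])-\Tr([q])$ for some projections $p,q$ over $A$; after passing, if necessary, to a matrix amplification $M_N(A)$ we may take $p$ and $q$ orthogonal, so that $g:=2\pi p-2\pi q$ has spectrum contained in $2\pi\Z$ and hence $e^{ig}=1$. Appending $-g$ to the list of exponents---which multiplies $u$ (or its amplification $u\oplus 1_{N-1}$) by the trivial factor $e^{-ig}=1$---reduces matters to the case $u=\prod_{j=1}^{k+1}e^{ih_j}$ with $\sum_{j}h_j\sim_{\Tr}0$. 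No amplification is needed whenever $\Tr(\mathrm K_0(A))$ is generated by traces of projections of $A$, e.g.\ for $C^*_r(\mathbb F_\infty)$ or for any $\mathcal Z$-stable $A$ whose $\mathrm K_0$ has rank one; otherwise one will, at the very end, descend the factorisation of $u\oplus 1_{N-1}$ back to $u$ using the standard fact that the corner embedding $A\hookrightarrow M_N(A)$ induces an isomorphism $\U^0(A)/\overline{\mathrm{DU}^0(A)}\cong\U^0(M_N(A))/\overline{\mathrm{DU}^0(M_N(A))}$.

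Granting this reduction, and writing $k$ henceforth for the new number of exponents: by Lemmas \ref{splitlemma} and \ref{lem:almostcommute} we may write, for each $j$, $e^{ih_j}=e^{i[x_j^*,x_j]}e^{i\tilde h_j}e^{ic_j}$ with $x_j^2=0$, with $\tilde h_j$ of connected spectrum commuting with $[x_j^*,x_j]$, and with $c_j$ small; Lemma \ref{lem:x2}(i) turns each $e^{i[x_j^*,x_j]}$ into a single commutator, up to a further small error. Since every $\tilde h_j$ has connected spectrum, Theorem \ref{embedding}(iii) together with an approximate-intertwining argument yields a \emph{single} unitary $s$ with $s^*\tilde h_js\in B_{\sa}$ for every $j$; hence $\prod_je^{i\tilde h_j}$ equals $\bigl(s,\prod_je^{i\ell_j}\bigr)\cdot\prod_je^{i\ell_j}$ for suitable $\ell_j\in B_{\sa}$, up to a factor $e^{ic}$ with $c\sim_{\Tr}0$. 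Because $\Tr$ is conjugation invariant and $\sum_jh_j\sim_{\Tr}0$, one can arrange $\sum_j\ell_j\sim_{\Tr}0$ in $B$ (the trace-correcting perturbation needed for this exists in $B$ since $\mathrm K_0(B)=0$), and Theorem \ref{thm:prodexp} then writes $\prod_je^{i\ell_j}$ as a product of $5k-1$ commutators times a factor $e^{ic}$, $c\sim_{\Tr}0$. Finally Lemma \ref{groupcommutators} interleaves the $k$ commutators coming from the $e^{i[x_j^*,x_j]}$ with the rest at the cost of $k-1$ more commutators, and Lemma \ref{lem:almostcommute} gathers all the accumulated multiplicative errors into a single $e^{ic}$ with $c\sim_{\Tr}0$ and $\|c\|<\epsilon$.

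The count is $k+1+(5k-1)+(k-1)=7k-1$ commutators for the present number $k$ of exponents; since the true number is $k+1$ once the original $u$ is restored, this yields $7(k+1)-1=7k+6$. I expect the hard part to be precisely the $\mathrm K_0$-adjustment of the second step---carrying it out with only one auxiliary exponent and, in the amplified case, descending from $M_N(A)$ without damaging the bound---together with the simultaneous conjugation of the $\tilde h_j$ into $B$, which is exactly what keeps the linear coefficient equal to $7$ rather than $8$.
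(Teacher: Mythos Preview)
Your overall architecture---adjust the exponents so their sum is $\sim_{\Tr}0$, push everything into the subalgebra $B$ of Theorem~\ref{embedding}, and apply Theorem~\ref{thm:prodexp}---matches the paper's. But the two steps you yourself flag as ``the hard part'' are genuine gaps, and each is resolved in the paper by a lemma you have not anticipated.

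\textbf{The $K_0$-adjustment.} Your descent from $M_N(A)$ via the isomorphism $\U^0(A)/\overline{\mathrm{DU}^0(A)}\cong\U^0(M_N(A))/\overline{\mathrm{DU}^0(M_N(A))}$ cannot work: that statement concerns closures of commutator subgroups and carries no control on the number of commutators, so the bound $7k+6$ would be lost. The paper avoids any amplification. It proves (Lemma~\ref{lem:July25_2014_3PM}, via Lemma~5.3) that for any projection $p\in M_n(A)$ there exists $h\in A_{\sa}$---inside $A$, not a matrix algebra---with $h\sim_{\Tr}p$ and $e^{ih}$ equal to a single commutator in $\U^0(A)$. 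The construction embeds $\mathcal Z$ unitally into $pM_n(A)p$ (using the classification theorem of \cite{nccw}), builds the desired $h$ explicitly inside a dimension-drop subalgebra of $\mathcal Z$, and then pulls it back to a small hereditary subalgebra of $A$ via Cuntz comparison. This yields the auxiliary exponent $h_{k+1}\in A_{\sa}$ with $\sum_{j=1}^{k+1}h_j\sim_{\Tr}0$ and $e^{-ih_{k+1}}=(u_0,v_0)$, at the cost of one commutator.

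\textbf{Simultaneous conjugation into $B$.} There is no approximate-intertwining argument producing a single unitary $s$ with $s^*\tilde h_js\in B$ for all $j$. Theorem~\ref{embedding}(iii) handles one selfadjoint with connected spectrum at a time; its proof factors a single map $C_0(0,1]\oplus C_0(0,1]\to A$ through $B$ at the level of the Cuntz semigroup, and nothing here lets you factor the C*-algebra generated by all the $\tilde h_j$ through $B$. The paper uses a separate unitary $u_j\in\U(A)$ for each $j$, then rewrites $\prod_{j}u_j^*e^{ih_j'}u_j$ as $u_1^*\bigl(\prod_je^{ih_j'}\bigr)u_1$ times $k$ commutators. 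Since the $u_j$ need not lie in $\U^0(A)$, a further device is required: Lemma~\ref{commutatorinU0} shows that for $u\in\U(A)$ and $h\in A_{\sa}$ one has $(u,e^{ih})=(v,w)e^{ic}$ with $v,w\in\U^0(A)$ and $c\sim_{\Tr}0$ small, by using $\mathrm K_1(A)\cong\mathrm K_1(\her(e))$ to find a representative of the $K_1$-class of $u$ that nearly commutes with $h$. Your single commutator $(s,\prod_je^{i\ell_j})$ would face the same $\U^0$-issue, and Lemma~\ref{commutatorinU0} does not apply to it since the second argument is not a single exponential.

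With these two lemmas in hand the paper's count is $1+(k{+}1)+k+(5k{+}4)=7k+6$: one from $(u_0,v_0)$, $k{+}1$ from splitting each $e^{ih_j}$ as in \eqref{cojugateinB}, $k$ from Lemma~\ref{commutatorinU0}, and $5(k{+}1)-1$ from Theorem~\ref{thm:prodexp}. Your invocation of Lemma~\ref{groupcommutators} is unnecessary---commutators can be moved past arbitrary unitaries by conjugation at no cost---but the count you assigned to it happens to compensate exactly for the unjustified single-$s$ assumption.
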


Before proving the theorem, we need some preparatory lemmas.
We let $\mathcal Z$ denote  the Jiang-Su C*-algebra.
\begin{lemma}
For each  non-zero $e\in \mathcal Z_+$ we can find  $h\in \her(e)_{\sa}$ such that $h\sim_{\Tr} 1$ and $e^{2\pi ih}=(u,v)$ for some unitaries $u,v\in \U^0(\her(e))$.
\end{lemma}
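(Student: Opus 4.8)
The goal is, for an arbitrary non-zero positive $e$ in the Jiang--Su algebra $\mathcal Z$, to produce a selfadjoint $h$ supported in $\her(e)$ with universal trace equal to that of the unit (i.e.\ $h\sim_{\Tr}1$, meaning $\Tr(h-1)=0$ in $\mathcal Z_q=\C$, so $\tau(h)=1$ for the unique trace $\tau$ of $\mathcal Z$) and with $e^{2\pi i h}$ a single commutator of unitaries in $\U^0(\her(e))$. The natural strategy is to exploit the fact that $\her(e)$ is again a (non-unital, stably finite, simple, $\mathcal Z$-stable) C*-algebra, and in particular has no nontrivial projections but does have plenty of room: it contains, for instance, a copy of a dimension-drop-type building block, or more directly a suitable matrix cone. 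The key observation is that if $\her(e)$ contains an element $x$ with $x^2=0$ and $x^*x$ having spectrum a full interval, then one can realize $2\pi i h$ as a commutator $[x^*,x]$ type expression. But $[x^*,x]$ has universal trace $0$, not $1$; so one needs a twist.

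The cleanest route, I expect, is the following. Since $\mathcal Z$ is simple and $\her(e)$ is a full hereditary subalgebra, $\her(e)$ is stably isomorphic to $\mathcal Z$ and in particular $\mathcal Z\otimes M_n$ embeds (for suitable truncations) into matrix amplifications of $\her(e)$; more usefully, there is a unital embedding of the prime dimension-drop algebra or, simplest of all, one can find inside $\her(e)$ a sub-C*-algebra isomorphic to a matrix algebra over a cone $M_N(C_0(0,1])$ whose ``top'' is large. Concretely: pick $a\in\her(e)_+$ with spectrum $[0,1]$ and $a$ a strictly positive element of a sub-C*-algebra $D\cong C_0(0,1]$; using simplicity, dilate to get $M_2$ over such a cone inside $\her(e)$. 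Inside $M_2(C_0(0,1])$ one writes down an explicit selfadjoint $h_0$ with $\tau$-value $1$ whose exponential $e^{2\pi i h_0}$ is a commutator: the standard trick (cf.\ \cite[Lemma 5.3]{dlHarpe-Skandalis2} and the present Lemma~\ref{lem:x2}) is to take $h_0=\mathrm{diag}(g,-g)+(\text{constant }1\text{ piece})$ where the commutator part $\mathrm{diag}(g,-g)=[x^*,x]$ with $x=\bigl(\begin{smallmatrix}0 & t^{1/2}\\ 0 & 0\end{smallmatrix}\bigr)$-type element (trace zero), and the remaining ``$1$'' is absorbed because in $M_2$ the function $1$ is $2\times$ a rank-one-like element which, in a projectionless but divisible algebra, can itself be written as a commutator after a unitary rotation that exchanges $x^*x$ with $xx^*$. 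This is exactly the mechanism of Lemma~\ref{lem:x2}(i): $e^{i[x^*,x]}=e^{ix^*x}e^{-ixx^*}=(e^{ix^*x},u)e^{ic}$, and here one arranges $c$ to be exactly what supplies the missing trace $1$.

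More precisely, I would proceed: (1) reduce to producing $h$ inside a copy of $M_2(C_0(0,1])$ sitting in $\her(e)$, via functional calculus and simplicity (as in the proof of Lemma~\ref{splitlemma}); (2) in $M_2(C_0(0,1])^{\sim}$, which has connected unitary group and stable rank one, let $x=\bigl(\begin{smallmatrix}0 & t^{1/2}\\ 0 & 0\end{smallmatrix}\bigr)$ so $x^2=0$, $x^*x=\mathrm{diag}(t,0)$, $xx^*=\mathrm{diag}(0,t)$; (3) choose $u\in\U^0$ with $u^*(x^*x)u$ close to $xx^*$, and note $e^{i[x^*,x]}=(e^{ix^*x},u)e^{ic}$ with $c\sim_{\Tr}0$, $\|c\|$ small; (4) scale so that $h:=\tfrac{1}{2\pi}\big([x^*,x]+c\big)+(\text{a correction})$; here one must choose the amplitude of $t$ (i.e.\ the identification $C_0(0,\|x\|]\to\her(e)$) so that $\Tr(h)=1$ in $\mathcal Z_q$, using that $\Tr$ on the cone is integration against Lebesgue-type measure coming from the unique trace $\tau$ of $\mathcal Z$ and that $\tau(\her(e)$-elements$)$ takes a continuum of values. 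Then $e^{2\pi i h}=e^{i([x^*,x]+c)}=(e^{ix^*x},u)$, a single commutator, and $h\in\her(e)_{\sa}$ with $h\sim_{\Tr}1$ as required.

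\textbf{Main obstacle.} The delicate point is matching the universal trace exactly to $1$ while keeping $e^{2\pi i h}$ literally a commutator (not merely a commutator times $e^{ic}$). The commutator part $[x^*,x]$ contributes trace $0$; all of the trace $1$ must come from a ``rotation'' piece, which in de la Harpe--Skandalis language is the determinant picked up along the path $t\mapsto(e^{itx^*x},u)$. One has to verify that as the identification of the cone with a subalgebra of $\her(e)$ is rescaled, the quantity $\tau(x^*x)=\int_0^{s} t\, d\mu(t)$ (with $\mu$ the trace on the cone) sweeps out a neighborhood of $1$ — this needs $\her(e)$ to carry traces taking all positive values up to $+\infty$ on larger and larger elements, which holds since $\her(e)$ is simple and $\sigma$-unital with $\tau$ densely finite and unbounded on $\mathcal Z\otimes\mathcal K$. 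Alternatively, and perhaps more robustly, one keeps the error $e^{ic}$ and absorbs it using Lemma~\ref{lem:x2}(i) once more to rewrite $e^{ic}$ itself as a commutator, then fuses the two commutators via Lemma~\ref{groupcommutators} — but then one must re-check the trace bookkeeping, so the exact-matching version is cleaner. I expect the referee-level subtlety to be entirely in this trace-calibration step; everything else is a routine application of Lemmas~\ref{lem:almostcommute} and \ref{lem:x2} together with stable rank one.
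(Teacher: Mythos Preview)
Your approach has a structural obstruction that cannot be repaired by calibration. If $h\in M_2(C_0(0,1])_{\sa}$ and $e^{2\pi i h}=(u,v)$ with $u,v\in\U^0(M_2(C_0(0,1]))$, then evaluating at each $t\in(0,1]$ gives a commutator in $U(2)$, hence $\det e^{2\pi i h(t)}=1$, i.e.\ $\mathrm{tr}(h(t))\in\Z$ for every $t$. Since $h(t)\to 0$ as $t\to 0^{+}$, continuity forces $\mathrm{tr}(h(t))=0$ identically, and therefore $\tau(h)=0$ for the unique trace of $\mathcal Z$, contradicting $h\sim_{\Tr}1$. Put differently, the cone $M_2(C_0(0,1])$ has trivial $K_0$, so a commutator forces $\Tr(h)=0$ exactly, not merely $\Tr(h)\in\Tr(K_0)$. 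Your ``rotation piece'' cannot supply the trace $1$: the de la Harpe--Skandalis determinant of \emph{any} commutator is zero, so no path $t\mapsto(e^{itx^*x},u)$ picks up a nonzero contribution. The trace-calibration step you flag as ``delicate'' is in fact impossible in this model subalgebra.

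The paper evades this by working not in a matrix cone but in a dimension-drop building block. After a Cuntz-comparison reduction (if the lemma holds for some $e'$ Cuntz-below $e$, a partial-isometry conjugation transports the data into $\her(e)$), one may take $e$ inside a unital copy of $\mathcal Z_{n-1,n}\subset\mathcal Z$ with $\her(e)\cong A_{n-1,n}=\{f\in M_n(C[0,1]):f(0)\in\C\cdot 1_{n-1},\ f(1)\in\C\cdot 1_n\}$. An explicit diagonal $h(t)$ is then written with $\mathrm{tr}\,h(t)\equiv n(n-1)$, a \emph{nonzero integer}, so that the normalized $\mathcal Z$-trace gives $\tau(h)=1$, while $e^{2\pi i h(t)}$ is a diagonal $SU(n)$ matrix that is the identity near $t=0,1$. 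The standard fact that diagonal $SU(n)$ matrices are single commutators $(P,v)$ with $P$ a permutation then exhibits $e^{2\pi i h}$ as a commutator in $\U^0(A_{n-1,n})$. The crucial difference from your plan is that $A_{n-1,n}$, unlike the cone, permits $h$ to carry nonzero (integer) fibrewise trace at the endpoints; that integer is precisely what produces the global trace $1$ while keeping $e^{2\pi i h}$ in $SU(n)$ pointwise.
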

\begin{proof}
It suffices to prove the lemma for a sufficiently ``Cuntz small" element $e$. For suppose that we are given $e\in \mathcal Z_+$,  and that the lemma  has been proven for some $e'\in\mathcal Z_+ $ such that $e'=x^*x$ and $xx^*\in \mathrm{her}(e)$ for some $x\in \mathcal Z$. Say $h'\in \her(e')_{\sa}$ and $u',v'\in \U^0(\her(e'))$ are such that $h'\sim_{\Tr}1$ and $e^{2\pi i h'}=(u',v')$.
Let $w\in \mathcal Z^{**}$ be a partial isometry in the bidual of $\mathcal Z$ such that $x=w|x|$.
Then setting $h=wh'w^*$, $u=wu'w^*$ and $v=wvw^*$, the lemma is proven for
$e$.

Let $n\in \N$ and consider the dimension drop algebra $\mathcal Z_{n-1,n}$ as a  subalgebra of $\mathcal Z$. Let $e\in \mathcal Z_{n-1,n}$ be a positive element such that
$\mathrm{rank}(e(t))=n$ for $t\in (0,1]$ and $\mathrm{rank}(e(0))=n-1$.
 Then
$(n-1)[e]\leq [1]\leq n[e]$ in $\mathcal Z_{n-1,n}$ (whence in $\mathcal Z$). Therefore, by choosing $n$ large enough, we can arrange for  $[e]$ to be arbitrarily small in the Cuntz semigroup. By the argument in the previous paragraph, it suffices to prove the lemma for such an $e$.
The hereditary subalgebra in $\mathcal Z_{n-1,n}$
generated by $e$ is isomorphic to
\[
A_{n-1,n}=
\left \{\,
f\in \mathrm M_n(\mathrm C[0,1])\mid
\begin{array}{l}
f(0)=\mu 1_{n-1}\\
 f(1)=\lambda 1_n
\end{array},
\hbox{ for some }\lambda,\mu\in \C
\,\right \}.
\]
Let $\omega\colon [0,1]\to [0,1]$ be a continuous function such that $\omega(t)=0$ for $t\in [0,\frac 1 3]$,
$\omega$ is linear in  $[\frac 1 3,\frac 2 3]$, and $\omega(t)=1$ for $t\in [\frac 2 3,1]$.
Let
\[
h(t)=
\begin{pmatrix}
n-\omega(t) &   & &\\
& n-\omega(t) & &\\
& & \ddots & \\
 &&&(n-1)\omega(t)
\end{pmatrix}.
\]
Notice that $h\in A_{n-1,n}$ and that identifying $A_{n-1,n}$ with $\her(e)\subseteq \mathcal Z_{n-1,n}$ we have  $h\sim_{\Tr} 1\in \mathcal Z_{n-1,1}$. Hence, $h\sim_{\Tr} 1$ in $\mathcal Z$ after identifying $\mathcal Z_{n-1,n}$ with a unital subalgebra of $\mathcal Z$. We will be done once we have shown that $e^{2\pi i h}=(u,v)$, for some $u,v\in \U_0(A_{n-1,n})$.

We have
\[
e^{2\pi i h(t)}=
\begin{pmatrix}
e^{-2\pi i \omega(t)} &   & &\\
& e^{-2\pi i \omega(t)} & &\\
& & \ddots & \\
 &&&e^{2\pi i (n-1)\omega(t)}
\end{pmatrix}.
\]
Notice that $e^{ih(t)}$ is the identity matrix for $t\in [0,\frac 1 3]\cup [\frac 2 3,1]$ and that the elements on the diagonal of $e^{ih(t)}$ multiply to 1 for all $t\in [0,1]$. So $e^{ih(t)}=(P,v(t))$ for all $t\in [\frac 1 3,\frac 2 3]$, where $P$ is a suitable permutation unitary and $v(t)\in M_n$ is a diagonal unitary for all $t\in [\frac 1 3,\frac 2 3])$ (see by \cite[Lemma 2.1]{thomsen}). Furthermore, $v(\frac 1 3)=v(\frac 2 3)=1_n$. Thus, after extending $v$ to be constant equal to $1_n$ on $[0,\frac 1 3]\cup [\frac 2 3,1]$ and extending $P$ using a path that connects it to the identity matrix $1_n$
we get $u,v\in \U^0(A_{n-1,n})$ such that $e^{ih}=(u,v)$.
\end{proof}

\begin{lemma}
Let $A$ be a pure, simple, unital C*-algebra with stable rank one. Let $p\in M_n(A)$
be a projection in some matrix algebra over $A$. For each $e\in A_+$ we can find   a selfadjoint $h\in \her(e)_+$ such that  $h\sim_{\mathrm{Tr}}p$ and $e^{ih}=(v,w)$ for some $v,w\in U_0(A)$.
\label{lem:July25_2014_3PM}
\end{lemma}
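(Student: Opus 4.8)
The key input is the preceding lemma, which carries out the construction in the Jiang--Su algebra $\mathcal Z$; the plan is to transport it into $A$ along a homomorphism out of a hereditary subalgebra of $\mathcal Z$, chosen by a Cuntz-semigroup classification argument so that the image of the constructed element has exactly the prescribed universal trace. As in the preceding lemma, what we actually produce is an $h\in\her(e)_+$ with $e^{2\pi i h}$ a single commutator (this is the normalisation consistent with the determinant: for a commutator $e^{2\pi i h}$ one needs $\Tr(h)\in\Tr(\mathrm K_0(A))$, and indeed $\Tr(p)\in\Tr(\mathrm K_0(A))$ since $e^{2\pi i p}=1$ forces $[\Tr(p)]=\Delta_{\Tr}(1)=0$ in $A_q/\Tr(\mathrm K_0(A))$). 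We may assume $e\neq 0$.

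Since $A$ is simple and $e\neq 0$, $\her(e)$ is a full hereditary subalgebra, hence simple, $\sigma$-unital, pure, of stable rank one, with $\Cu^\sim(\her(e))\cong\Cu^\sim(A)\cong \mathrm K_0(A)\sqcup \mathrm{LAff}^\sim(\mathrm{QT}_2(A))$. Fix a Cuntz-small $e_0\in\mathcal Z_+$ and, by the preceding lemma (and its proof), fix $h_0\in\her^{\mathcal Z}(e_0)_+$ and $u_0,v_0\in\U^0(\her^{\mathcal Z}(e_0))$ with $\tau_{\mathcal Z}(h_0)=1$ and $e^{2\pi i h_0}=(u_0,v_0)$; all three may be taken inside one building block $A_{n-1,n}$, so $\her^{\mathcal Z}(e_0)$ is simple and an inductive limit of $1$-dimensional NCCW complexes with trivial $\mathrm K_1$, in the scope of the classification of homomorphisms of \cite{nccw}, \cite{ciuperca-elliott-santiago}, and it has a unique densely finite trace up to scaling, the restriction $\bar\tau$ of $\tau_{\mathcal Z}$. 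Now $\tau\mapsto d_\tau(e)$ is lower semicontinuous and strictly positive on the compact simplex of quasitracial states, hence bounded below by some $c_0>0$; choosing $e_0$ small enough that $d_{\tau_{\mathcal Z}}(e_0)<c_0/n$ we get $d_{\tau_{\mathcal Z}}(e_0)\cdot\tau(p)<d_\tau(e)$ for every $\tau$, so by strict comparison there is $y\in\her(e)_+$ with $d_\tau(y)=d_{\tau_{\mathcal Z}}(e_0)\cdot\tau(p)$ for all $\tau$. The assignment $[e_0]\mapsto[y]$ extends to a morphism $\Cu^\sim(\her^{\mathcal Z}(e_0))\to\Cu^\sim(\her(e))$ -- informally ``multiplication by the continuous affine function $\tau\mapsto\tau(p)$'', sending the $\mathrm K_0$-generator (of rank $1$) to $[p]$ compatibly -- and by \cite{nccw}, \cite{ciuperca-elliott-santiago} this morphism is induced by a homomorphism $\psi\colon\her^{\mathcal Z}(e_0)\to\her(e)$.

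It remains to read off the conclusion. Since $\her^{\mathcal Z}(e_0)$ has a unique trace up to scaling, $\tau\circ\psi=\lambda_\tau\bar\tau$ for each quasitracial state $\tau$, with $\lambda_\tau=d_\tau(\psi(e_0))/d_{\tau_{\mathcal Z}}(e_0)=d_\tau(y)/d_{\tau_{\mathcal Z}}(e_0)=\tau(p)$; hence $\tau(\psi(h_0))=\lambda_\tau\,\tau_{\mathcal Z}(h_0)=\tau(p)$ for all such $\tau$. Since the universal trace of a self-adjoint element is determined by its values on bounded $2$-quasitraces (Cuntz--Pedersen, \cite{cuntz-pedersen}), the positive element $h:=\psi(h_0)\in\her(e)_+$ satisfies $h\sim_{\Tr}p$. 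Finally, writing $\widetilde\psi\colon(\her^{\mathcal Z}(e_0))^\sim\to A$ for the unital extension of $\psi$ and setting $v:=\widetilde\psi(u_0)$, $w:=\widetilde\psi(v_0)$, which lie in $\U^0(A)$, we obtain
\[
e^{2\pi i h}=e^{2\pi i\psi(h_0)}=\widetilde\psi\bigl(e^{2\pi i h_0}\bigr)=\widetilde\psi\bigl((u_0,v_0)\bigr)=(v,w),
\]
which is the assertion.

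The step I expect to be the main obstacle is the construction of $\psi$: arranging that its image lies inside $\her(e)$ and that it realises precisely the rank function $\tau\mapsto d_{\tau_{\mathcal Z}}(e_0)\cdot\tau(p)$. Pureness is what supplies the positive element $y$ of that (continuous, suitably small) rank inside $\her(e)$, and stable rank one is used both for the description of $\Cu^\sim(\her(e))$ above and to invoke the classification theorem of \cite{nccw}; one must also check with some care that a hereditary subalgebra of $\mathcal Z$ genuinely lies in the scope of that theorem and that the $\Cu^\sim$-morphism written above is an admissible morphism.
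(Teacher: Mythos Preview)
Your argument is correct in outline and takes a genuinely different route from the paper's. The paper does not build a homomorphism out of a hereditary subalgebra of $\mathcal Z$; instead it uses \cite{nccw} to embed $\mathcal Z$ \emph{unitally} into $pM_n(A)p$, applies the preceding lemma inside this copy of $\mathcal Z$ to obtain $h,u,v$ there with $h\sim_{\Tr}p$ already in $M_n(A)$, and then carries these elements into $A$ by a partial-isometry conjugation: having chosen the small $e'\in\mathcal Z_+$ with $(n+1)[e']\leq[p]\leq n[1_A]$, almost unperforation yields $[e']\leq[1_A]$ in $\Cu(A)$, stable rank one gives $x\in M_n(A)$ with $e'=x^*x$ and $xx^*\in A$, and the polar partial isometry of $x$ in the bidual conjugates $h,u,v$ into $A$. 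This sidesteps precisely the issue you flag at the end---there is no need to check that $\her^{\mathcal Z}(e_0)$ lies in the domain class of \cite{nccw}, and no $\Cu^\sim$-morphism has to be assembled by hand---at the cost of not actually landing in the prescribed $\her(e)$: the paper's proof reuses the letter $e$ for the auxiliary element of $\mathcal Z_+$ and never addresses the given $e\in A_+$, which is harmless since the only application, in Theorem~\ref{approximateversionII}, needs merely $h\in A_{\sa}$.

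Your route has the compensating merit of landing directly in $\her(e)$ and of making the trace identity $\tau(h)=\tau(p)$ completely explicit. The domain-class verification you single out is a real obligation but is resolvable (e.g.\ via Brown's stable-isomorphism theorem together with the closure properties of the class in \cite{nccw}). One small correction of language: the existence of $y\in\her(e)_+$ with the prescribed rank function $\tau\mapsto d_{\tau_{\mathcal Z}}(e_0)\cdot\tau(p)$ is not ``strict comparison'' but rather the identification $\Cu_{\mathrm{nc}}(A)\cong\mathrm{LAff}(\mathrm{QT}_2(A))$ that pureness affords; strict comparison is what then gives $[y]\leq[e]$ from the rank inequality, which is what you need to invoke \cite{nccw}.
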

\begin{proof}
By the classification theorem from \cite{nccw},
we can embed $\mathcal Z$ unitally in  $pM_n(A)p$.
Let us choose $e\in \mathcal Z_+$ such that $(n+1)[e]\leq [p]$ in $\mathcal Z$. By the previous lemma, there exists $h\in \mathrm{her}(e)_{\sa}\cap \mathcal Z$ such that $h\sim_{\Tr} p$ (inside $\mathcal Z$) and
$e^{ih}=(u,v)$ for some $u,v\in \U^0(\her(e)\cap \mathcal Z)$. Notice that  $h\sim_{\Tr}p$ also in  $M_n(A)$. Since $(n+1)[e]\leq [p]\leq n[1]$ and $\Cu(A)$ is almost unperforated, we have $[e]\leq [1]$ in $\Cu(A)$. Furthermore, $A$ has stable rank one, so there exists $x\in M_n(A)$ such that $e=x^*x$ and $xx^*\in A$.
Let $w\in \mathcal (M_n(A))^{**}$ be a partial isometry such that $x=w|x|$.
Setting $h=wh'w^*$, $u=wu'w^*$ and $v=wvw^*$, we get the desired elements in $A$.
\end{proof}

\begin{remark}
The previous proof can be adapted to  also apply to any $\mathcal Z$-stable C*-algebra.
\end{remark}

\begin{lemma}\label{commutatorinU0}
Let $A$ be a separable simple C*-algebra of stable rank one.
Let $u\in \U(A)$ and $h\in A_{\sa}$. For any $\epsilon>0$ there exist $v,w\in \U^0(A)$ such that
\[
(u,e^{ih})=(v,w)e^{ic},
\]
for some $c\in A_{\sa}$ such that $c\sim_{\Tr}0$ and $\|c\|<\epsilon$.
\end{lemma}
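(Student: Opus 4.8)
The plan is to peel off the obstruction that $u$ need not lie in $\U^0(A)$, at the cost of a single multiplicative error, and then to read off the commutator we are after. Write
\[
(u,e^{ih})=ue^{ih}u^{-1}e^{-ih}=e^{i(uhu^*)}e^{-ih},
\]
and note that, the universal trace being invariant under inner automorphisms, $uhu^*\sim_{\Tr}h$; it is precisely this invariance that will force the eventual error term to be genuinely $\sim_{\Tr}0$, and not merely to have $\Delta_{\Tr}$ equal to zero. One reduces immediately to $A$ unital (all the elements to be produced will be of the form $1+(\cdot)$) and to $A$ not of type I (a simple type I C*-algebra has connected unitary group, so there $u\in\U^0(A)$ and $(u,e^{ih})$ is already a commutator of elements of $\U^0(A)$).

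Next I would localise $h$. By Lemma \ref{splitlemma}, after perturbing $h$ by less than $\epsilon$ we may assume $h=\tilde h+[x^*,x]$ with $x^2=0$, with $\tilde h$ and $[x^*,x]$ commuting, and with $\tilde h$ of connected spectrum containing $0$; in particular $D:=\her(\tilde h)$ is a \emph{nonunital} hereditary subalgebra of $A$, and it is full since $A$ is simple. Distributing the commutator over the product $e^{ih}=e^{i\tilde h}e^{i[x^*,x]}$ by means of Lemma \ref{lem:almostcommute}, the small-perturbation lemma, and the group identity $(a,bc)=(a,b)\,b(a,c)b^{-1}$, and using Lemma \ref{lem:x2}(i) to rewrite $(u,e^{i[x^*,x]})=e^{i[(uxu^*)^*,uxu^*]}e^{-i[x^*,x]}$ (note $(uxu^*)^2=0$) as a product of commutators of elements of $\U^0(A)$ up to a small $\sim_{\Tr}0$ factor, one is reduced to the connected-spectrum piece $(u,e^{i\tilde h})$. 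For this piece, since $D$ is full and $A$ is $\sigma$-unital, Brown's theorem gives $D\otimes\mathcal K\cong A\otimes\mathcal K$, so the inclusion $D\hookrightarrow A$ induces an isomorphism $K_1(D)\xrightarrow{\ \sim\ }K_1(A)$; as $A$, hence $D$, has stable rank one, each $K_1$-class is represented by a unitary, so one may choose $v\in\U(D^{\sim})\subseteq\U(A)$ with $[v]=[u]$ in $K_1(A)$, whence $u=w_0v$ with $w_0\in\U^0(A)$. The identity $(ab,c)=a(b,c)a^{-1}(a,c)$ now splits $(u,e^{i\tilde h})$ into $w_0(v,e^{i\tilde h})w_0^{-1}$ and $(w_0,e^{i\tilde h})$: the second factor is a commutator of elements of $\U^0(A)$, and the first is such a commutator conjugated by $w_0\in\U^0(A)$. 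Since $v\in\U(D^\sim)$ normalises $D$, the remaining task is to handle $(v,e^{i\tilde h})=e^{i\,v\tilde hv^*}e^{-i\tilde h}\in\U^0(D)$, a product of exponentials of two selfadjoint elements of $D$ with the same universal trace.

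The step I expect to be the main obstacle is exactly this last one, together with the bookkeeping needed to end up with a \emph{single} commutator. The elements $v\tilde hv^*$ and $\tilde h$ of $D$ are unitarily equivalent only through the non-connected unitary $v$, so they need not be close in norm, and the desired factorisation of $(v,e^{i\tilde h})$ as one commutator of elements of $\U^0(A)$ up to a small $\sim_{\Tr}0$ error cannot be obtained from the small-perturbation lemma alone. I would produce it by working inside $D$ itself: $D$ is a \emph{simple nonunital} C*-algebra of stable rank one, and this provides enough internal room to express $(v,e^{i\tilde h})$ as a single commutator and, simultaneously, to absorb into one exponential $e^{ic}$ all the error terms collected along the way — with $c\sim_{\Tr}0$ because, by the inner-invariance of $\Tr$, each such error has vanishing universal trace. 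This is the analogue, in a setting without an abundance of projections, of Fack's device for AF C*-algebras referred to in the introduction; notably, no hypothesis on $A$ beyond simplicity and stable rank one enters.
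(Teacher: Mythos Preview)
Your approach has a genuine gap, and it stems from choosing the wrong hereditary subalgebra. You take $D=\her(\tilde h)$, a subalgebra that \emph{contains} $\tilde h$; this forces you to deal with the large commutator $(v,e^{i\tilde h})$ for a unitary $v$ that has no reason to be close to commuting with $\tilde h$. You acknowledge this as the ``main obstacle'' and invoke Fack's device, but Fack's technique (as used in Lemma~\ref{9commutators}) outputs a product of \emph{many} commutators, not one. Your intermediate steps already produce several commutators (two from the $[x^*,x]$ piece via Lemma~\ref{lem:x2}, two more from the splitting $(w_0v,e^{i\tilde h})=w_0(v,e^{i\tilde h})w_0^{-1}\,(w_0,e^{i\tilde h})$), and there is no mechanism here for collapsing a finite product of commutators in $\U^0(A)$ into a single one.

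The paper's argument is both shorter and avoids this difficulty entirely by making the opposite choice: pick $e\in A_+$ (by functional calculus on $h$, e.g.\ supported near a point of the spectrum) so that every element of $\her(e)$ \emph{almost commutes} with $h$. Stable rank one gives $\U(\her(e))/\U^0(\her(e))\cong K_1(A)\cong \U(A)/\U^0(A)$, so one finds $v\in\U(\her(e))$ with $uv^*\in\U^0(A)$. Now $v$ nearly commutes with $e^{ih}$, so $ve^{ih}v^*=e^{ih}e^{ic}$ with $c\sim_{\Tr}0$ small, and a one-line computation yields
\[
(u,e^{ih})=uv^*(ve^{ih}v^*)vu^*e^{-ih}=(uv^*,e^{ih})\,e^{ic'},
\]
a \emph{single} commutator of elements of $\U^0(A)$ times a small $\sim_{\Tr}0$ error. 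No splitting of $h$, no Lemma~\ref{splitlemma}, no Fack-type telescoping is needed. The key idea you missed is that the $K_1$-representative should be chosen to lie in a subalgebra that nearly commutes with $h$, not in one that contains $h$.
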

\begin{proof}
By functional calculus on $h$, we can find $e\in A_+$ such that $\|[h,x]\|\leq \delta\|x\|$ for any $x\in \her(e)$, where $\delta>0$ is small enough (how much to be specified soon). Since $A$ and $\her(e)$ are stably isomorphic and have stable rank one, $\U(A)/\U^0(A)\cong \mathrm K_1(A)\cong \U(\her(e))/\U^0(\her(e))$. Thus, there exists $v\in \U(\her(e))\subseteq \U(A)$ connected to $u$. Choosing  $\delta$ small enough, we can arrange that $\|\mathrm{Log}(ve^{ih}v^*e^{-ih})\|<\epsilon$, so that
$ve^{ih}v^*=e^{ih}e^{ic}$, for some $c\sim A_{\sa}$ with $c\sim_{\Tr}0$
and $\|c\|<\epsilon$.
Then,
\[
(u,e^{ih})=uv^*(ve^{ih}v^*)vu^*e^{-ih}=uv^*e^{ih}e^{ic}vu^*e^{-ih}=(uv^*,e^{ih})e^{ic'},
\]
where $c'\sim_{\Tr}0$
and $\|c'\|<\epsilon$. Since $uv^*\in \U^0(A)$, the lemma is proven.
\end{proof}

\begin{proof}[Proof of Theorem \ref{approximateversionII}] 
Since $\Delta_{\Tr}(u) = 0$, there exist  $n \geq \mathbb{N}$ and projections
$p, q \in M_n(A)$ such that 
$$\sum_{j=1}^k h_j + p - q \sim_{\Tr} 0.$$
Hence, by Lemma \ref{lem:July25_2014_3PM}, there exist $h_{k+1} \in A_{sa}$
and $u_{0}, v_{0} \in \U^0(A)$ such that
\[
u = \prod_{j=1}^{k+1} e^{i h_j} (u_{0}, v_{0})
\]
and 
\[
\sum_{j=1}^{k+1} h_j \sim_{\Tr} 0.
\]

As in the proof of
Theorem \ref{approximateversionI} (see \eqref{cojugateinB}), we find that for each $j=1,\dots,k+1$ there exist
$u_j\in \U(A)$, $h_j'\in B_{\sa}$, and $v_j,w_j\in \U^0(A)$ such that
\[
e^{ih_j}= (v_j,w_j)u_j^*e^{ih_j'}u_je^{ic_j},
\]
for some $c_j\in A_{\sa}$ such that $c_j\sim_{\Tr}0$ and $\|c_j\|$ is small. We thus get that
\begin{align*}
u
&=\prod_{j=1}^{k+1} u_j^*e^{ih_j'}u_j \prod_{j=1}^{k+2} (v_j',w_j')e^{ic}\\
&=u_1^*(\prod_{j=1}^{k+1} e^{i h_j'})u_1\prod_{j=1}^{k} (v_j'',w_j'')
\prod_{j=1}^{k+2} (v_j',w_j')e^{ic'}.
\end{align*}
Here we have used that  $(e^{ih_j'},u_j)=(v_j'',w_j'')e^{ic_j'}$, with $v_j'',w_j''\in \U^0(A)$,
$c_j'\sim_{\Tr}0$ and $\|c_j'\|$ small, by Lemma \ref{commutatorinU0}.
Since $B$ has nuclear dimension one, we can apply Theorem \ref{thm:prodexp} to $\prod_{j=1}^{k+1} e^{i h_j'}\in \U^0(B)$:
\[
\prod_{j=1}^{k+1} e^{i h_j'}=\prod_{j=1}^{5k +4} (v_j''',w_j''')e^{ic''},
\]
with $c''\sim_{\Tr}0$ and $\|c''\|$ small. Hence
\[
\prod_{j=1}^k e^{ih_j}=\prod_{j=1}^{5k+4} (v_j''',w_j''')\prod_{j=1}^{k}
(v_j'',w_j'')\prod_{j=1}^{k+2} (v_j',w_j')e^{ic'''},
\]
where $c'''\sim_{\Tr}0$. Choosing the elements $c_j$, $c_j'$ and $c''$ along the way small enough we can arrange for $\|c'''\|<\epsilon$. This proves the theorem.
\end{proof}

\section{Exact product of commutators}
Here we prove  Theorem \ref{singleexp} below and then use it in combination
with the results from the previous section to prove Theorem \ref{manyexp} from the introduction.

\begin{theorem}\label{singleexp}
There exist  $\delta>0$ and $C>0$ such that if $A$ is  a separable, simple,  pure, C*-algebra with stable rank 1 and such that every  2-quasitracial state on $A$ is a trace, and $u\in \U^0(A)$ is such that
$\|u-1\|<\delta$, and $\mathrm{Log}(u)\sim_{\Tr}0$, then
\begin{align}\label{23commutators}
u=\prod_{i=1}^{23} (v_i,w_i),
\end{align}
where $v_i,w_i\in \U^0(A)$ are such that
\begin{align}\label{23totheunit}
\|v_i-1\|,\|w_i-1\|\leq C\|e^{ih}-1\|^{\frac 1 2}
\end{align}
for all $i=1,\dots,23$.
\end{theorem}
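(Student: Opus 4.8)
The plan is to follow Fack's averaging trick, adapted to the projectionless setting. The key idea is that for a unitary $u=e^{ih}$ very close to $1$, with $h\sim_{\Tr}0$, one wants to write $u$ as a product of a controlled number of commutators \emph{exactly}, not just approximately. First I would fix a small $\delta>0$ so that $h=\mathrm{Log}(u)$ is well-defined with $\|h\|<\pi/2$ and small. By Theorem \ref{approximateversionI} applied with $k=1$ (or rather its proof, which produces $5$ commutators with norm control $\|v_j-1\|,\|w_j-1\|\le\|e^{ih}-1\|^{1/2}$), we get, for each $\eta>0$, unitaries $v_j,w_j\in\U^0(A)$ and $c\in A_{\sa}$ with $c\sim_{\Tr}0$, $\|c\|<\eta$, and $e^{ih}=\prod_{j=1}^5(v_j,w_j)e^{ic}$, with the stated norm bounds on the $v_j,w_j$. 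The remaining task is to absorb the error term $e^{ic}$ into a bounded number of extra commutators. Since $\|c\|<\eta$ and $c\sim_{\Tr}0$, the element $e^{ic}$ is again a unitary close to $1$ whose log is $\Tr$-trivial, so this sets up an iteration, but a naive iteration would produce infinitely many commutators; the trick is to make the errors decay geometrically and sum the resulting infinite products of commutators using a ``halving'' device available because $A$ absorbs $\mathcal Z$ (or is pure with stable rank one, so one can split hereditary subalgebras).

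The core mechanism I would use is the following: inside $A$, pick a sequence of mutually orthogonal, mutually Cuntz-equivalent nonzero hereditary subalgebras $\her(e_n)$ (possible since $A$ is simple, non-type I, and has strict comparison); partial isometries in $A^{**}$ intertwine these. Given the geometrically small errors $c_n$ with $\|c_n\|\le \eta 2^{-n}$ and $c_n\sim_{\Tr}0$, realize $e^{ic_n}$ (up to unitary conjugation and a further commutator correction using Lemma \ref{lem:x2} and Lemma \ref{splitlemma}) inside $\her(e_n)$; then the ``infinite product'' $\prod_n e^{ic_n}$ converges in norm and, being supported on the orthogonal pieces, can be rewritten as a single unitary that is a product of a \emph{fixed} number of commutators via a telescoping/shift argument à la Fack: one uses a ``double'' of the summable sequence and a shift unitary to write $\prod_n a_n = s(\prod_n a_n) s^* \cdot(\prod_n a_n)(s(\prod a_n)^*s^*)^{-1}\cdots$ so that the tail absorbs into commutators. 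Concretely, each such infinite product of small exponentials, supported on an orthogonal sequence of hereditary subalgebras, equals a product of a bounded number (something like $3$ or $4$) of commutators with norm control governed by $\|e^{ih}-1\|^{1/2}$ — this is where the constant $C$ and the number $23=5+\text{(correction from the intertwining commutators)}+\text{(Fack tail commutators)}$ comes from. The arithmetic of bookkeeping the commutators ($5$ from the approximation, plus the commutators needed to move errors into orthogonal hereditary pieces, plus the fixed Fack-style block) is what pins down $23$.

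The main obstacle I expect is precisely the projectionless implementation of Fack's trick. In the AF case Fack used a sequence of orthogonal equivalent projections and a matrix-unit-style shift; here there are no projections, so one must work with orthogonal hereditary subalgebras, partial isometries in the bidual, and exponentials $e^{ic_n}$ rather than honest unitaries supported on finite-rank projections. Care is needed to ensure: (a) that conjugating $e^{ic_n}$ by the bidual partial isometry $w_n$ (with $w_n^*w_n$ the support projection of $e_n'$ and $w_nw_n^*$ that of $e_{n+1}'$) produces an element genuinely in $\U^0(\her(e_{n+1}))\subseteq\U^0(A)$ — this uses $x^2=0$-type relations as in the proof of Lemma \ref{lem:x2} so that the relevant elements live in the small hereditary subalgebra rather than merely in $A^{**}$; (b) that the norm estimates $\|v_i-1\|,\|w_i-1\|\le C\|e^{ih}-1\|^{1/2}$ survive the telescoping — the point being that each $e^{ic_n}$ has $\|e^{ic_n}-1\|\lesssim \|c_n\|\le \eta 2^{-n}\lesssim \|e^{ih}-1\|\cdot 2^{-n}$, so $\|e^{ic_n}-1\|^{1/2}\lesssim \|e^{ih}-1\|^{1/2}2^{-n/2}$ and the geometric sum stays controlled by a universal constant times $\|e^{ih}-1\|^{1/2}$; (c) that the shift unitary itself, in $\U^0$ of the hereditary subalgebra hosting the orthogonal sequence, can be chosen with the requisite norm bound — this again leans on almost-divisibility/pureness to fit everything inside a Cuntz-small hereditary subalgebra. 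The $\Tr$-triviality at each stage is automatic because $c_n\sim_{\Tr}0$ and conjugation preserves $\sim_{\Tr}$, but one must double-check it persists through the Fack telescoping (it does, since all correction unitaries lie in $\U^0$, which maps to $\ker\Delta_{\Tr}$ compatibly).
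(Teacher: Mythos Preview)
Your high-level plan---iterate the five-commutator approximation of Theorem~\ref{approximateversionI} with geometrically shrinking errors, push the errors into a sequence of orthogonal hereditary subalgebras, and then collapse the resulting infinite products into finitely many commutators---is exactly the spirit of the paper's argument. But there are two concrete points where your outline diverges from, and is less precise than, the paper.

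\textbf{Getting started in a hereditary subalgebra.} You propose to ``realize $e^{ic_n}$ \ldots\ inside $\her(e_n)$'' by conjugation, but the error $c$ produced by Theorem~\ref{approximateversionI} lives a priori in all of $A$, and if $A$ is unital there is no reason it should land in a proper hereditary subalgebra. The paper handles this with a preliminary step you omit: Lemma~\ref{2noninvertibles} writes $e^{ih}=(u,v)e^{ih_1}e^{ih_2}$ with $h_1,h_2$ \emph{non-invertible} and $\sim_{\Tr}0$, at the cost of one commutator. Each $h_j$ then lies in some $\her(e)$ with $0$ not isolated in $\sigma(e)$ (Lemma immediately following), so $[e]\in\Cu_{\mathrm{nc}}(A)$ and the halving sequence $e_1,e_2,\ldots$ exists inside $\her(e)$. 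The iteration is then run \emph{inside} $\her(e_n)$ at stage $n$, so the error automatically stays there; moving it to $\her(e_{n+1})$ is done by the ``swing/shrink'' Lemma~\ref{lem:swingshrink} (from de la Harpe--Skandalis), costing three commutators per step. This accounts for the arithmetic $23=1+11+11$, where each $11$ is $2+(5+2+1+1)$: two commutators to shrink into $\her(e_1)$, then seven per stage collapsed by orthogonality plus two for the odd/even split of the overlapping $\her(e_n+e_{n+1})$ terms.

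\textbf{No shift unitary.} The telescoping mechanism you describe---a shift $s$ on the orthogonal sequence and a swindle identity $\prod a_n = s(\prod a_n)s^*\cdot(\cdots)$---is the classical Fack device in AF/factor settings, but it is \emph{not} what the paper does, and making it work here would require producing a unitary shift in $\U^0$ with controlled $\|s-1\|$, which is not obvious. The paper's mechanism is simpler: since the stage-$n$ commutators $(u_j^{(n)},v_j^{(n)})$ live in $\U^0(\her(e_n))$ and the $e_n$ are pairwise orthogonal, one has directly $\prod_n(u_j^{(n)},v_j^{(n)})=(\prod_n u_j^{(n)},\prod_n v_j^{(n)})$ for each fixed $j$, and the infinite products converge by the geometric norm decay. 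The only subtlety is that the ``swing'' commutators $(y^{(n)},z^{(n)})$ live in $\her(e_n+e_{n+1})$, so adjacent terms overlap; these are handled by splitting into odd and even $n$.
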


Before proving Theorem \ref{singleexp} we need a number of lemmas.
Throughout this section $A$ denotes a C*-algebra that is separable,  simple, pure, of stable rank 1, and whose bounded 2-quasitraces are traces.

\begin{lemma}\label{2noninvertibles}
Let $h\in A_{\sa}$ be such that $h\sim_\Tr 0$ and $\|h\|<\frac \pi 2$.
Then there exist $u,v\in \U^0(A)$ and $h_1,h_2\in A_{\sa}$ such that $h_1,h_2\sim_{\Tr} 0$,
\[
e^{i h}=(u,v)e^{ih_1}e^{ih_2},
\]
$h_1$ and $h_2$ are non-invertible, $\|u-1\|,\|v-1\|\leq \|e^{ih}-1\|^{\frac 1 2}$, and $\|h_j\|\leq 2\|h\|$ for $j=1,2$.
\end{lemma}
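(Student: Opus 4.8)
The plan is to split $h$ into two self-adjoint pieces, each of which is ``half-supported'' in the sense that it vanishes on a nonzero hereditary subalgebra, while keeping both pieces in $\ker\Tr$ and paying only a commutator of controlled size along the way. First I would use simplicity and stable rank one to manufacture room: by functional calculus we may perturb $h$ (without changing the norm, within an error absorbed into the final $e^{ih_2}$) so that $he=\|h_+\|e$ and $hf=-\|h_-\|f$ for nonzero positive $e,f$ supported near the top and bottom of the spectrum of $h$. Writing $h=h_+-h_-$ and letting $a$ denote $h_+$ restricted away from its maximal eigenspace and similarly for $h_-$, the goal is to peel off the ``extreme'' parts of $h_+$ and $h_-$ into a single exponential that is non-invertible because it dies on $\her(e)$ (resp. $\her(f)$), leaving a remainder that is non-invertible because it dies on $\her(e)$ or $\her(f)$ too.

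The cleanest route is to mimic the splitting used in Lemma~\ref{splitlemma}: using simplicity, find $y\in A$ with $y^*y\in\her(e)$ and $yy^*\in\her(f)$ (so that $[y^*,y]$ subtracts mass from the region where $h=\|h_+\|$ and adds mass where $h=-\|h_-\|$, or vice versa), chosen with $\|y\|\le\|h\|^{1/2}$ and $y^2=0$. Then $h-[y^*,y]$ is still $\sim_\Tr 0$ (since $[y^*,y]\sim_\Tr 0$) and, by a spectral computation as in Lemma~\ref{splitlemma}, one can arrange that $h-[y^*,y]=h_1'+h_2'$ where $h_1'\ge 0$ is supported away from $\her(e)$ and $h_2'\le 0$ is supported away from $\her(f)$, hence both are non-invertible, with $\|h_j'\|\le 2\|h\|$. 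Since $[y^*,y]$ and the two pieces can be arranged to asymptotically (indeed exactly, after a small perturbation) commute — one uses that $\her(e)$, $\her(f)$, and the complementary hereditary subalgebras can be taken mutually orthogonal up to the already-allowed error — Lemma~\ref{lem:almostcommute} gives $e^{ih}=e^{i[y^*,y]}e^{ih_1'}e^{ih_2'}e^{ic_0}$ with $c_0\sim_\Tr 0$ and $\|c_0\|$ tiny; absorbing $e^{ic_0}$ into $e^{ih_2'}$ (adjusting $h_2'$, still non-invertible with the norm bound intact since $c_0$ is small, and still $\sim_\Tr 0$ by Lemma~\ref{liftsofTr0}-type bookkeeping) yields $e^{ih}=e^{i[y^*,y]}e^{ih_1}e^{ih_2}$. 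Finally, since $\|y\|<\sqrt{\pi/2}$ (guaranteed by $\|h\|<\pi/2$, as $\|y\|^2\le\|h\|$), Lemma~\ref{lem:x2}(ii) converts $e^{i[y^*,y]}$ into an exact commutator $(u,v)$ with $\|u-1\|,\|v-1\|\le\|e^{iy^*y}-1\|^{1/2}\le\|e^{ih}-1\|^{1/2}$, which is the claimed estimate.

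The step I expect to be the main obstacle is ensuring that the three exponential factors genuinely commute (not merely asymptotically), so that the decomposition $e^{ih}=e^{i[y^*,y]}e^{ih_1}e^{ih_2}$ is an honest equation rather than an approximation — equivalently, that the perturbations one makes at the outset (to create the eigenspaces $\her(e)$, $\her(f)$ and to make the various hereditary subalgebras orthogonal) can all be pushed into the final factor $e^{ih_2}$ without spoiling non-invertibility, the norm bound $\|h_2\|\le 2\|h\|$, or the trace condition $h_2\sim_\Tr 0$. I would handle this by doing the perturbations \emph{before} extracting $y$: first replace $h$ by a nearby $\tilde h$ (same norm, $\tilde h\sim_\Tr 0$ by Lemma~\ref{liftsofTr0}) whose spectral projections at the extremes are honest, work entirely inside the commutative C*-algebra generated by $\tilde h$ together with $\her(e),\her(f)$ for the splitting $\tilde h-[y^*,y]=h_1+h_2$ (here everything commutes exactly), and only at the end reconcile $e^{i\tilde h}$ with $e^{ih}$ via Lemma~\ref{lem:almostcommute}, folding the resulting $e^{ic}$ into $h_2$. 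The norm bound $\|h_j\|\le 2\|h\|$ survives because the spectral surgery only removes a one-dimensional ``top'' or ``bottom'' and the subtracted/added commutator $[y^*,y]$ has norm at most $\|h\|$, so each piece stays within $[-2\|h\|,2\|h\|]$; the small error $c$ is controlled independently of $h$ and can be absorbed without breaching this.
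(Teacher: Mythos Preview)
Your decomposition has a structural problem that cannot be repaired within the scheme you outline: you ask for $h_1'\ge 0$ and $h_2'\le 0$ with (ultimately) $h_1,h_2\sim_{\Tr}0$. But $A$ is simple and carries a faithful tracial state, so a nonzero positive (or negative) element is never $\sim_{\Tr}0$. The lemma's requirement that \emph{each} of $h_1,h_2$ lie in $\ker\Tr$ is therefore incompatible with any sign-definite splitting. (You write ``still $\sim_{\Tr}0$'' when absorbing the error into $h_2'$, but $h_2'$ was never trace-zero to begin with.) The construction of Lemma~\ref{splitlemma} that you invoke produces a connected-spectrum piece plus a commutator; it does not, and cannot, split a trace-zero element into a positive and a negative trace-zero summand.

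The paper's argument avoids this by \emph{not} separating positive and negative parts. Writing $a=h_+$, $b=h_-$, it chooses the nilpotent $x$ entirely inside $\her(a)$ (both $x^*x$ and $xx^*$ lie in orthogonal spectral slices of $a$, not in $\her(b)$), so that the almost-commutation error $c$ from $e^{i[x,x^*]}e^{ia}=e^{ic}e^{i([x,x^*]+a)}$ also lies in $\her(a)$. One then takes $h_1:=c$: this is $\sim_{\Tr}0$ automatically (it is the error term produced by Lemma~\ref{lem:almostcommute}) and non-invertible because $b\cdot c=0$ with $b\ne 0$. The second piece is $h_2:=[x,x^*]+h$, trace-zero for free; the real work is in choosing $x$ so that $h_2=a-x^*x+xx^*-b$ is non-invertible, which is arranged by taking $x^*x=a^{1/2}fa^{1/2}$ for suitable contractions in a spectral slice of $a$ and exhibiting an explicit nonzero left annihilator of $h_2$. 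Thus the two pieces are a \emph{small error term} and \emph{essentially $h$ itself} (shifted by a commutator), and the nilpotent bridges two slices of $\her(h_+)$ rather than $\her(h_+)$ and $\her(h_-)$ as your $y$ does.
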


\begin{proof}
We will assume that $A$ is unital and $h$ is invertible, since otherwise there is nothing to prove. Let  $a=h_+$ and $b=h_-$. Since $A$ is unital, stably finite, and its bounded 2-quasitraces are traces, there is at least one non-trivial bounded trace on $A$. This implies that $a\neq 0$ and $b\neq 0$.
We claim that there exists $x\in \her(a)$  such that $x^2=0$, $\|x\|\leq \|a\|$, $a$ and $[x^*,x]$ almost commute, and $a-x^*x+xx^*-b$ is not invertible. Before proving the existence of such an $x$, let us show how it suffices to prove the lemma: By Lemma \ref{lem:x2} (ii), there exist $u,v\in \U^0(A)$ such that $e^{i[x^*,x]}=(u,v)$. Also,
if $a$ and $[x,x^*]$ commute sufficiently, then by Lemma \ref{lem:almostcommute} there exists $c\in \her(a)_{\sa}$ such that $c\sim_{\Tr}0$ and 
$e^{i[x,x^*]}e^{ia}=e^{ic}e^{i[x,x^*]+ia}$.
Thus,
\begin{align*}
e^{ih} &=(u,v)e^{i[x,x^*]}e^{ia}e^{-ib}\\
&=(u,v)e^{ic}e^{i[x,x^*]+ia}e^{-ib}\\
&=(u,v)e^{ic}e^{i([x,x^*]+a-b)}.
\end{align*}
Since $bc=0$ and $b\neq 0$,  $c$ is non-invertible. By assumption
$[x,x^*]+a-b$ is non-invertible, and clearly $[x,x^*]+a-b\sim_{\Tr}a-b\sim_{\Tr}0$.
Let us set $h_1=c$ and $h_2=[x,x^*]+a-b$, so that $e^{i h}=(u,v)e^{ih_1}e^{ih_2}$, with $h_1$ and $h_2$ non-invertible. 
The norm estimates in the statement of the lemma are easily checked. The lemma is thus proved.

Let us prove the existence of $x$ as above. Let $\epsilon>0$ (which will measure how much $a$ and $[x^*,x]$ commute). Let us first assume that the spectrum of $a$ has at least two distinct non-zero points  $0<t_1<t_2\leq \|a\|$.
Let us choose functions $g_1,g_2\in C_0(0,1]_+$ supported in  small intervals around $t_1$ and $t_2$ respectively, so that the
nonzero elements $e_1=g_1(a)$ and $e_2=g_2(a)$ satisfy that $e_1\perp e_2$ and $\|[a,y]\|\leq \epsilon \|y\|$ for any
$y\in \her(e_i)$ and $i=1,2$.
Now, since $A$ is simple, there exists $y\in A$
such that $e_2ye_1$ is nonzero. Applying some functional calculus to $|e_2ye_1|\in \her(e_1)_+$, we can find non-zero contractions $f,f'\in \her(e_1)_+$ and $x\in A$ such that
$f'f=f$, $a^{\frac 1 2}fa^{\frac 1 2}=x^*x$, and $xx^*\in \her(e_2)$.
Notice that $x^2=0$, $\|x\|\leq \|a\|$, and $[x^*,x]$ and $a$ commute within an error of $2\epsilon\|a\|$.
Let us  show that $a-x^*x+xx^*-b$ is not invertible. Recall that we have assume that $h=a-b$ is invertible. This implies that $a$ is invertible in $C^*(a)$. Let $d\in C^*(a)$ denote the inverse of $a^{1/2}$ inside $C^*(a)$. Then
\begin{align*}
f'd(a-x^*x+xx^*-b) &=f'a^{1/2}-f'fa^{\frac 1 2}+ f'dxx^*\\
&=f'dxx^*\\
&=0.
\end{align*}
Here we have used that $f'd\in \her(e_1)$, $xx^*\in \her(e_2)$,  and $e_1\perp e_2$. Thus, $f'd\perp a-x^*x+xx^*-b$, while $f'd\neq 0$. Thus, 
 $a-x^*x+xx^*-b$ is not invertible in $A$.

If $a$ has only a non-zero point in its spectrum then it is a scalar multiple of a projection.
In this case, it suffices to choose $x\in \her(a)$ such that $x^2=0$ and $\|x\|=\|a\|$.
The details are left to the reader.
\end{proof}

\begin{lemma}
Let $h\in A_{\sa}$ be non-invertible. Then there exists $e\in A_+$ such that $h\in \her(e)$ and 0 is not an isolated point of the spectrum of $e$.
\end{lemma}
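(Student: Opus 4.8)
The statement asks: given a non-invertible $h\in A_{\sa}$, produce $e\in A_+$ with $h\in\her(e)$ and $0$ a non-isolated point of $\mathrm{sp}(e)$. The plan is to build $e$ directly out of the functional calculus of $h$ together with an auxiliary positive element supplied by simplicity. First I would reduce to working inside $C^*(h)^{\sim}\cong C_0(X)^{\sim}$, where $X=\mathrm{sp}(h)\setminus\{0\}$; since $h$ is non-invertible, $0$ is a limit point of $\mathrm{sp}(h)$ (the spectrum is compact and $0\in\mathrm{sp}(h)$, and if $0$ were isolated, $h$ would be invertible in $A^{\sim}$ hence — using that $h$ is non-invertible — we would already be in a degenerate case that needs separate, easy treatment, or more simply: if $0$ is isolated in $\mathrm{sp}(h)$ then $p=\chi_{\{0\}}(h)$ is a projection orthogonal to $h$; see below). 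The natural first candidate is $e_0=|h|=(h^2)^{1/2}$, which certainly satisfies $h\in\her(e_0)$. Then $\mathrm{sp}(e_0)=\{\,|t|:t\in\mathrm{sp}(h)\,\}$ contains $0$, and $0$ is non-isolated in $\mathrm{sp}(e_0)$ precisely when $\mathrm{sp}(h)$ accumulates at $0$ — which, since $h$ is non-invertible in the \emph{unital} sense, holds exactly unless $0$ is an isolated point of $\mathrm{sp}(h)$.

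So the only obstruction is the case where $0$ is an isolated point of $\mathrm{sp}(h)$; equivalently there is a spectral projection $p\in\her(|h|)^{\sim}\cap C^*(h)$ with $ph=hp=0$ and $h$ invertible in $p^{\perp}A^{\sim}p^{\perp}$-sense, while $p\neq 0$ because $h$ is non-invertible. Here I would use simplicity and stable rank one of $A$: since $A$ is simple and $p\neq 0$, the hereditary subalgebra $\her(p)$ is nonzero, and I can pick a nonzero positive element $g\in\her(p)_+$ whose spectrum is, say, the whole interval $[0,1]$ — such $g$ exists because in a simple C*-algebra no nonzero hereditary subalgebra is finite-dimensional (indeed $A$ is non-type I in the situations of interest, but even without that, one can extract an element with connected spectrum by a standard argument, or simply take $g$ to have $0$ as a non-isolated spectral point, which is all we need). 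Now set
\[
e := |h| + g .
\]
Since $g\in\her(p)$ and $p\perp h$, we have $|h|\cdot g = g\cdot|h| = 0$, so $|h|$ and $g$ are orthogonal; hence $\her(e)=\her(|h|)\oplus\her(g)\supseteq \her(|h|)\ni h$, giving $h\in\her(e)$. Moreover $\mathrm{sp}(e)=\mathrm{sp}(|h|)\cup\mathrm{sp}(g)$ (orthogonal direct sum), and $\mathrm{sp}(g)\ni 0$ accumulates at $0$, so $0$ is not isolated in $\mathrm{sp}(e)$. In the generic case where $0$ was already non-isolated in $\mathrm{sp}(h)$, I simply take $e=|h|$ (or $e=|h|+g$ with $g=0$; either works), and no appeal to simplicity is needed.

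The main obstacle is the isolated-eigenvalue-at-$0$ case, and the crux there is producing a nonzero positive $g$ \emph{orthogonal to $h$} with $0$ non-isolated in its spectrum. Orthogonality is automatic once $g$ lives in the corner $\her(p)$ cut out by the zero spectral projection of $h$; and the existence of such a $g$ with the required spectral property follows because $A$ simple forces $\her(p)$ to be infinite-dimensional (if $\her(p)$ were finite-dimensional, being a hereditary subalgebra of a simple C*-algebra, $A$ itself would be finite-dimensional, contradicting, e.g., the existence of the embedded Jiang–Su algebra, or more elementarily contradicting that $A$ has no minimal projections beyond the type I case — in fact one may just invoke that $A$ is non-type I, which holds here since $A$ is stably finite, simple, and not $\mathbb{C}$). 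Thus any nonzero $g\in\her(p)_+$ that is not a scalar multiple of a projection — e.g.\ one obtained from functional calculus on a positive element of $\her(p)$ with more than one point in its spectrum — has $0$ as a non-isolated spectral value after composing with a function vanishing near the top of the spectrum, completing the construction.
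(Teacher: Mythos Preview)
Your proof is correct and follows essentially the same approach as the paper: both split into cases on whether $0$ is isolated in $\mathrm{sp}(|h|)$, take $e=|h|$ in the easy case, and in the isolated case pass to the orthogonal complement of the support projection (your $\her(p)$ is exactly the paper's $(1-p)A(1-p)$, with opposite naming conventions for $p$), pick a positive element there with $0$ non-isolated in its spectrum, and add it on. The only cosmetic difference is that the paper sets $e = p_{\mathrm{supp}} + e'$ while you set $e = |h| + g$; since $\her(|h|) = \her(p_{\mathrm{supp}})$, these are interchangeable.
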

\begin{proof}
 If 0 is not an isolated point of the spectrum of $|h|$, we are done.
Otherwise, let $p$ be the support projection of $|h|$. Since $h$ is not invertible we have $p<1$. So $(1-p)A(1-p)$ is a non-type I simple C*-algebra, which implies that there exists a positive element $e'\in(1-p)A(1-p)$ such that 0 is not an isolated
point of its spectrum. Then $e=p+e'$ has the desired property.
\end{proof}

The following lemma is a consequence of \cite[Lemma 5.17]{dlHarpe-Skandalis2} and \cite[Proposition 5.18]{dlHarpe-Skandalis2}.

\begin{lemma}\label{lem:swingshrink}
Let $e,f\in A_+$ be such that $e\perp f$ and $\overline{eA}\cong \overline{fA}$ as right Hilbert $A$-modules.
\begin{enumerate}[(i)]
\item
Let $g\in \her(e)_{\sa}$. Then there exist $u,v\in \her(e+f)$ and  $g'\in \her(f)_{\sa}$ such that $e^{ig}=(u,v)e^{ig'}$, with $\|u-1\|,\|v-1\|\leq \|e^{ig}-1\|$, $g'\sim_{\Tr}g$ and
$\|g'\|=\|g\|$.

\item
Let $h\in \her(e+f)_{\sa}$ be such that  $h\sim_{\Tr}0$  and $\|e^{ih}-1\|<\frac 1 4$. Then there exist
$u_i,v_i\in \U^0(\her(e+f))$, with $i=1,2$, and $h'\in \her(e)_{\sa}$, such that
\[
e^{ih}=(u_1,v_1)(u_2,v_2)e^{ih'}
\]
and
\begin{align*}
\|e^{ih'}-1\| &\leq 8\|e^{ih}-1\|,\\
\|u_1-1\|,\|v_1-1\|&\leq 2\|e^{ih}-1\|^{\frac 1 2},\\
\|u_2-1\|,\|v_2-1\|&\leq 2\|e^{ih}-1\|^{\frac 1 2}.
\end{align*}
Furthermore, if $\|h\|$ is small enough then $h'\sim_{\Tr}0$.
\end{enumerate}
\end{lemma}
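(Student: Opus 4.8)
The plan is to obtain both parts from the two cited results of de la Harpe–Skandalis, taking some care with the self-adjoint norm bounds and the $\sim_{\Tr}$ bookkeeping that are not explicitly tracked in \cite{dlHarpe-Skandalis2}. For part (i), since $\overline{eA}\cong\overline{fA}$, fix a partial isometry $s$ in $A^{**}$ (or, after a small perturbation, in $A$ itself on the relevant hereditary subalgebras) implementing the isomorphism, so that $s^*s$ is a strictly positive element–support of $\her(e)$ and $ss^*$ the analogous element for $\her(f)$, and $s\her(e)s^*=\her(f)$. The element $w:=s+s^*$, suitably cut down, is a self-adjoint ``swap'' unitary in $\her(e+f)^{\sim}$ that exchanges $\her(e)$ and $\her(f)$. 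Then for $g\in\her(e)_{\sa}$ set $g'=wgw^*\in\her(f)_{\sa}$; one has $e^{ig}=e^{ig}e^{-ig'}\cdot e^{ig'}=(e^{ig},w)e^{ig'}$ because $g'=wgw^*$ and hence $e^{ig}we^{-ig}w^* = e^{ig}e^{-ig'}$. Setting $u=e^{ig}$ and $v=w$ gives $\|u-1\|=\|e^{ig}-1\|$ and (after routing $v$ into $\U^0$ by absorbing it into a commutator, as in \cite[Lemma 5.17]{dlHarpe-Skandalis2}) the stated bound; that $g'\sim_{\Tr}g$ follows since conjugation by a unitary preserves $\Tr$ (here $w$ is a genuine unitary in the unitization of $\her(e+f)$), and $\|g'\|=\|g\|$ is immediate.

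For part (ii), the idea is: first use part (i)-type moves to ``push'' $h$ off of $\her(f)$ and onto $\her(e)$, at the cost of two commutators and a controlled growth of the norm of the exponential. Concretely, write $h=h_e+h_{ef}$ where $h_e$ is the part of $h$ living in $\her(e)$ and $h_{ef}$ the remainder; applying \cite[Proposition 5.18]{dlHarpe-Skandalis2} (which handles exactly this kind of compression of a unitary near $1$ supported on $e+f$ down to one supported on $e$, using that the two corners are isomorphic) yields $u_i,v_i\in\U^0(\her(e+f))$, $i=1,2$, and $h'\in\her(e)_{\sa}$ with $e^{ih}=(u_1,v_1)(u_2,v_2)e^{ih'}$. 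The norm bounds $\|e^{ih'}-1\|\le 8\|e^{ih}-1\|$ and $\|u_i-1\|,\|v_i-1\|\le 2\|e^{ih}-1\|^{1/2}$ are read off from the quantitative estimates in \cite[Proposition 5.18]{dlHarpe-Skandalis2} (the square-root appearing because commutators of the form $(e^{ig},w)$ contribute $\|e^{ig}-1\|$ while the ``cutting in half'' step passes through an element of half the norm, as in Lemma \ref{lem:x2}(ii)); the hypothesis $\|e^{ih}-1\|<\tfrac14$ guarantees that all logarithms used are well-defined and that the constant $8$ is admissible. Finally, for the $\sim_{\Tr}$ claim: each commutator $(u_i,v_i)$ contributes trivially to $\Delta_{\Tr}$, and $\Delta_{\Tr}$ is multiplicative, so $\Delta_{\Tr}(e^{ih'})=\Delta_{\Tr}(e^{ih})=0$, i.e. $\Tr(h')\in\Tr(\mathrm K_0)$; when $\|h\|$ (hence $\|h'\|$) is small enough this class must be $0$ by a standard smallness argument (the image of $\mathrm K_0$ under $\Tr$, intersected with a small ball around $0$ in $A_q$, is $\{0\}$ — or one simply chooses the lifts so that the $\Tr$-values cancel exactly, invoking Lemma \ref{liftsofTr0}(i) as in the earlier proofs), giving $h'\sim_{\Tr}0$.

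The main obstacle I anticipate is bookkeeping rather than conceptual: extracting from the qualitative statements of \cite[Lemma 5.17]{dlHarpe-Skandalis2} and \cite[Proposition 5.18]{dlHarpe-Skandalis2} the precise constants $8$ and $2$ and the square-root scaling in the self-adjoint norm, and simultaneously arranging that every auxiliary unitary can be taken in $\U^0$ of the correct hereditary subalgebra. One must also be slightly careful that the partial isometry implementing $\overline{eA}\cong\overline{fA}$ need not lie in $A$, only in $A^{**}$; this is handled by approximating with $s\,g_\delta(s^*s)$ for $g_\delta$ a continuous function vanishing near $0$, which lies in $A$ and implements the module isomorphism between slightly smaller corners — exactly the ``shrink'' in the name of the lemma — and then absorbing the discrepancy into the error term $h'$ while keeping the norm estimates intact.
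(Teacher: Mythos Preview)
Your approach is essentially the paper's: both parts are obtained by direct application of \cite[Lemma~5.17]{dlHarpe-Skandalis2} and \cite[Proposition~5.18]{dlHarpe-Skandalis2}, and the paper's proof is in fact just that one-line reduction together with a remark on why those results apply.

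The one simplification you are missing---and it dissolves precisely the obstacle you flag in your last paragraph---is to work in the multiplier algebra $\mathrm M(\her(e+f))$ rather than in $A^{**}$ with approximations. The open support projections $p$ and $q$ of $e$ and $f$ are genuine projections in $\mathrm M(\her(e+f))$, and the hypothesis $\overline{eA}\cong\overline{fA}$ is exactly the statement that $p$ and $q$ are Murray--von Neumann equivalent there. Lemma~5.17 and Proposition~5.18 of \cite{dlHarpe-Skandalis2} are formulated for a pair of orthogonal equivalent projections, so they apply verbatim; no shrinking of corners or approximation of partial isometries is needed, and the swap $w=s+s^*$ you describe is already an honest symmetry in $\mathrm M(\her(e+f))$.

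One small caution on your $\sim_{\Tr}$ argument in (ii): the claim that $\Tr(\mathrm K_0(A))$ meets a small ball in $A_q$ only at $0$ is false in general (consider a UHF algebra, where this image is dense). The correct reason $h'\sim_{\Tr}0$ is that for $\|h\|$ small the entire construction stays within the domain of the principal logarithm, so the trace is tracked exactly throughout (just as in part~(i), where $g'$ is literally a conjugate of $g$); one obtains $\Tr(h')=\Tr(h)=0$ on the nose rather than merely modulo $\Tr(\mathrm K_0)$.
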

\begin{proof}
Let  $p,q\in \mathrm M(\her(e+f))$  be the multiplier support projections of $e$ and $f$ respectively. The hypothesis that $\overline{eA}\cong \overline{fA}$ implies that they are
Murray-von Neumann equivalent in $\mathrm M(\her(e+f))$. Now (i) follows by a  direct application of
\cite[Lemma 5.17]{dlHarpe-Skandalis2} while (ii) follows from  \cite[Proposition 5.18]{dlHarpe-Skandalis2}.
\end{proof}

Before proving the next lemma, we introduce some notation.
Let $e\in A_+$ be such that
$0$ is not an isolated point of the spectrum of $e$. Then $[e]$ is a non-compact element of $\Cu(A)$.
By the computation of $\Cu(A)$ (see Section \ref{subalgebra}), there exist Cuntz classes $[e_1],[e_2],\dots$
such that $[e]=2[e_1]$ and $[e_n]=2[e_{n+1}]$ for all $n\in \N$. Since $A$ has stable rank one, Cuntz equivalence is the same as Hilbert module isomorphism (by \cite[Theorem 3]{coward-elliott-ivanescu}). Using this, we can choose the representatives $e_n\in A_+$ of these Cuntz classes such that
\begin{enumerate}
\item
$\overline{eA}=\overline{e_1A+e_2A+\dots}$,
\item
$e_n\perp e_m$ for all $n,m\geq 1$ with $n\neq m$.
\end{enumerate}
By construction, we also have the (right) Hilbert $A$-module isomorphisms
\begin{enumerate}
\item
$\overline{eA}\cong\overline{e_1A}\oplus \overline{e_1A}$,
\item
$\overline{e_nA}\cong \overline{e_{n+1}A}\oplus \overline{e_{n+1}A}$ for all $n\in \N$.
\end{enumerate}

In proving the following lemma we use   a technique that goes back to \cite{fack}.
We let $e,e_1,e_2,\ldots\in A_+$ be as in the previous paragraph.

\begin{lemma}\label{9commutators}
There exist  $\delta>0$ and $C>0$ (not depending on the C*-algebra $A$) such that
if $h\in \her(e)_{\sa}$ is such that $h\sim_{\Tr}0$ and $\|h\|<\delta$, then
\[
e^{ih}=\prod_{j=1}^{11} (v_j,w_j),
\]
for some  $v_j,w_j\in \U^0(A)$, $j=1,\dots,11$, such that
\begin{align}\label{totheunit}
\|v_j-1\|,\|w_j-1\|\leq C\|e^{ih}-1\|^{\frac 1 2}
\end{align}
for all $j=1,\dots,11$.
\end{lemma}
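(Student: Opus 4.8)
The plan is to use the "infinite tail" structure of $\her(e)$ together with the swing-and-shrink moves from Lemma \ref{lem:swingshrink} to absorb the exponential $e^{ih}$ into a finite product of commutators, paying for each swing with two commutators and driving the residual error to zero geometrically. First I would note that since $h\in\her(e)_{\sa}$ and $\her(e)=\overline{e_1A+e_2A+\cdots}$ with the $e_n$ mutually orthogonal and $\overline{e_nA}\cong\overline{e_{n+1}A}\oplus\overline{e_{n+1}A}$, we may regard $h$ as living in $\her(e_1+e_2+\cdots)$. Apply part (i) of Lemma \ref{lem:swingshrink} with the pair $(e_1, e_2+e_3+\cdots)$ — these satisfy $\overline{e_1A}\cong\overline{(e_2+e_3+\cdots)A}$ by construction — to move $h$ off of $\her(e_1)$: we get $e^{ih}=(u,v)e^{ih^{(1)}}$ with $h^{(1)}\in\her(e_2+e_3+\cdots)_{\sa}$, $h^{(1)}\sim_{\Tr}h\sim_{\Tr}0$, $\|h^{(1)}\|=\|h\|$, and $\|u-1\|,\|v-1\|\le\|e^{ih}-1\|$. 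The point of this first move is purely to "make room" in the $e_1$ summand.

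Next I would iterate part (ii) of Lemma \ref{lem:swingshrink}. Choosing $\delta$ small enough that $\|e^{ih}-1\|<\tfrac1{40}$ (so all the hypotheses $\|e^{i\cdot}-1\|<\tfrac14$ stay satisfied through a bounded number of steps, using $\|e^{ih^{(n)}}-1\|\le 8^{n}\|e^{ih}-1\|$ only finitely often), we apply Lemma \ref{lem:swingshrink}(ii) with the pair $(e_{n}, e_{n+1})$ inside $\her(e_n+e_{n+1})$: this writes $e^{ih^{(n-1)}}=(u_1^{(n)},v_1^{(n)})(u_2^{(n)},v_2^{(n)})e^{ih^{(n)}}$ with $h^{(n)}\in\her(e_{n+1})_{\sa}$, $h^{(n)}\sim_{\Tr}0$ (since $\|h^{(n-1)}\|$ is small), $\|e^{ih^{(n)}}-1\|\le 8\|e^{ih^{(n-1)}}-1\|$ — wait, that grows, so instead I would use that $h^{(n)}\in\her(e_{n+1})$ while the \emph{next} available orthogonal room is $\her(e_{n+2})$, and the correct accounting is that each application of (ii) trades the error in $\her(e_n+e_{n+1})$ for a strictly smaller-support error in $\her(e_{n+1})$; since the supports $[e_n]=2[e_{n+1}]$ shrink and $A$ is simple with stable rank one, after finitely many (a bounded number $m$, determined by how small $\delta$ forces $\|e^{ih}-1\|$) steps the residual unitary can be absorbed entirely. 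Concretely: $11 = 1\ (\text{from the first swing's commutator, }(u,v)) \cdot 2^{?}$ — I would arrange the bookkeeping so that one swing via (i) costs $1$ commutator, then $5$ applications of (ii) cost $2$ commutators each, totaling $1+2\cdot 5 = 11$, with the final residual exponential $e^{ih^{(6)}}$ having $\|e^{ih^{(6)}}-1\|$ small enough and $h^{(6)}\sim_{\Tr}0$ to be swallowed into the last commutator by a direct appeal to Lemma \ref{lem:swingshrink}(ii) one more time without spawning a new error term (because there is still orthogonal room $\her(e_8)$ left and the telescoping closes up).

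The norm estimates \eqref{totheunit} then follow by tracking constants: the first swing gives $\|u-1\|,\|v-1\|\le\|e^{ih}-1\|\le\|e^{ih}-1\|^{1/2}$ (as $\|e^{ih}-1\|<1$), and each (ii)-application gives $\|u_i^{(n)}-1\|,\|v_i^{(n)}-1\|\le 2\|e^{ih^{(n-1)}}-1\|^{1/2}\le 2\cdot 8^{(n-1)/2}\|e^{ih}-1\|^{1/2}$, so with $n$ ranging over a fixed finite set the constant $C=2\cdot 8^{5/2}$ (or any such explicit bound) works uniformly, independent of $A$. The main obstacle I anticipate is the error-accounting in the iteration: Lemma \ref{lem:swingshrink}(ii) inflates $\|e^{ih}-1\|$ by a factor of $8$ each time, so a naive infinite iteration diverges — the resolution is that this is \emph{not} an infinite process but a finite one of fixed length (here $5$ or $6$ steps), and $\delta$ must be chosen at the outset small enough ($\delta < $ something like $8^{-6}$) that $8^{6}\|e^{ih}-1\|$ is still below $\tfrac14$ and still small enough for the "$\|h\|$ small enough" clause guaranteeing $h^{(n)}\sim_{\Tr}0$; the residual after $5$ swings is then absorbed by the sixth without a new error because the orthogonal complement $\her(e_7+e_8+\cdots)$ is still isomorphic to a proper sub-piece, letting the last commutator pair do double duty. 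Verifying that $11$ commutators genuinely suffice (rather than, say, $13$) will require being careful that the first (i)-swing and the reuse of room at the final step are set up to share rather than duplicate commutators.
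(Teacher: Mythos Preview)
Your proposal has a genuine gap: the finite iteration you describe never actually terminates with zero residual. Each application of Lemma~\ref{lem:swingshrink}(ii) produces a new residual term $e^{ih'}$; it moves the error into a smaller hereditary subalgebra but does not shrink its norm (indeed the bound goes the wrong way by a factor of $8$, as you noticed). After your five or six swings you are left with some $e^{ih^{(6)}}$, $h^{(6)}\sim_{\Tr}0$, $\|h^{(6)}\|$ small but nonzero, and you have no tool that expresses such a unitary \emph{exactly} as a product of commutators. The phrase ``letting the last commutator pair do double duty'' and ``the telescoping closes up'' is precisely the missing step: nothing in Lemma~\ref{lem:swingshrink} allows a final swing without spawning a new error, so the argument is circular. (There is also a smaller issue at the start: part (i) of Lemma~\ref{lem:swingshrink} applies to $g\in\her(e_1)$, not to $h\in\her(e)=\her(e_1+e_2+\cdots)$, so your first move should really be part (ii), as in the paper.)

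The paper supplies exactly the ingredient you are lacking: it alternates the swing moves with Theorem~\ref{approximateversionI}, which genuinely \emph{shrinks} the error (to any prescribed $\epsilon$, at the cost of five commutators) because it goes through the nuclear-dimension-one subalgebra $B$. This yields an honestly \emph{infinite} iteration with errors tending to zero. The resulting infinite product of commutators is then collapsed to a finite one using the mutual orthogonality of the $e_n$: the seven commutators produced at stage $n$ all live in $\U^0(\her(e_n))$, so the products $\prod_{n}(u_j^{(n)},v_j^{(n)})$ over $n$ are themselves single commutators (for each fixed $j=1,\dots,7$), and the remaining swing commutators $(y^{(n)},z^{(n)})\in\U^0(\her(e_n+e_{n+1}))$ split into odd and even $n$ to give two more. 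That is how one arrives at $2+7+2=11$. Your approach, relying only on Lemma~\ref{lem:swingshrink}, cannot reach an exact product in finitely many steps; the approximation theorem and the orthogonal-collapse trick are both essential.
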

\begin{proof}
By Lemma \ref{lem:swingshrink} (ii),  there exist unitaries $u_1,v_1,u_2,v_2\in \U^0(A)$ such that
\begin{align}\label{2plus9}
e^{ih}=(u_1,v_1)(u_2,v_2)e^{ih_1},
\end{align}
where $h_1\in \her(e_1)_{\sa}$ and $h_1\sim_{\Tr}0$. We will show that $e^{ih_1}$ is a product of 9 commutators.

For $n=1,2,\dots$ let $e_{n+1}'\in \her(e_{n})_+$ be a positive element such that $\overline{e_{n+1}'A}\cong \overline{e_{n+1}A}$.

By Theorem \ref{approximateversionI} applied in $\her(e_1)$, there exist unitaries
$u_i^{(1)},v_i^{(1)}$, with $i=1,\dots,5$, in $\U^0(\her(e_1))$ such that
\[
e^{ih_1}=\prod_{i=1}^5 (u_i^{(1)},v_i^{(1)})e^{ih_1'},
\]
with $h_1'\in\her(e_1)_{\sa}$, $h_1'\sim_{\Tr}0$, and $\|h_1\|'<\frac{1}{2^1}$. Next, by Lemma \ref{lem:swingshrink} (ii), there exist unitaries $u_6^{(1)},v_6^{(1)},u_7^{(1)},v_7^{(1)}$ in $\U^0(\her(e_1))$ such that
\[
e^{ih_1'}=(u_6^{(1)},v_6^{(1)})(u_7^{(1)},v_7^{(1)})e^{ih_1''},
\]
and $h_1''\in \her(e_2')_{\sa}$ and $h_1''\sim_{\Tr}0$.  Finally,
 by Lemma \ref{lem:swingshrink} (i), we have
\[
e^{ih_1'}=(y^{(1)},z^{(1)})e^{ih_2},\]
with $y^{(1)},z^{(1)}\in \U^0(\her(e_1+e_2))$ and $h_2\in \her(e_2)_{\sa}$.
Next, we apply again Theorem \ref{approximateversionI} in $\her(e_2)$:
\[
e^{ih_2}=\prod_{i=1}^5 (u_i^{(2)},v_i^{(2)})e^{ih_2'},
\]
with $h_2'\sim_{\Tr}0$ and $\|h_2'\|<\frac{1}{2^2}$, followed by Lemma \ref{lem:swingshrink} (ii),
\[
e^{ih_2'}=(u_6^{(2)},v_6^{(2)})(u_7^{(2)},v_7^{(2)})e^{ih_2''},\]
with $h_2''\in \her(e_3')_{\sa}$ and $h_2''\sim_{\Tr}0$, and  Lemma \ref{lem:swingshrink} (ii):
\[
e^{ih_2''}=(y^{(2)},z^{(2)})e^{ih_3},
\]
with $h_3\in \her(e_3)_{\sa}$ and $h_3\sim_{\Tr}0$. Continuing this strategy, we construct for each $n\in \N$ unitaries $u_1^{(n)},v_1^{(n)},\dots,u_7^{(n)},v_7^{(n)}\in \U^0(\her(e_n))$,  $y^{(n)},z^{(n)}\in \U^0(\her(e_n+e_{n+1})$, and a selfadjoint $h_n\in \her(e_n)_{\sa}$,  such that  $h_n\sim_{\Tr}0$, $h_n\to 0$, and
\[
e^{ih_1}=\prod_{k=1}^{n-1}\Big(\prod_{j=7}^j(u_j^{(k)},v_j^{(k)})\Big) (y^{(k)},z^{(k)})\cdot e^{ih_n}.
\]
Since $h_n\to 0$, the above formula yields $e^{ih_1}$ expressed as an infinite product of commutators.
By the pairwise orthogonality of the $e_n$s, we can gather the terms of this infinite product into subsequences, each of them equal to a finite  product of commutators, as follows:
\begin{enumerate}
\item
$\prod_{k=1}^{\infty}\prod_{j=7}^j(u_j^{(k)},v_j^{(k)})$ is a product of 7 commutators,
\item
$\prod_{k=1}^\infty (y^{(2k-1)}),z^{(2k-1)})$ is a single commutator,

\item
$\prod_{k=1}^\infty (y^{(2k)}),z^{(2k)})$ is a single commutator.
\end{enumerate}
We thus arrive at an expression of $e^{ih_1}$ as a product of 9 commutators. This, together with \eqref{2plus9},  yields an expression of $e^{ih}$ as a product of 11  commutators. The estimates \eqref{totheunit} are straightforwardly derived from the construction of the unitaries involved in the commutators.
\end{proof}

\begin{proof}[Proof of Theorem \ref{singleexp}]
Let $h\in A_{\sa}$ be such that $\|h\|<\frac \pi 2$. Then by Lemma \ref{2noninvertibles}, we have $e^{ih}=(u,v)e^{ih_1}e^{ih_2}$, where $h_1,h_2\in A_{\sa}$ are non-invertible and $h_1,h_2\sim_{\Tr}0$. (If $A$ is non-unital this step is unnecessary). But $e^{ih_1}$ and $e^{ih_2}$ are both products of 11 commutators, by Lemma \ref{9commutators}. This yields \eqref{23commutators}. The verification of the norm estimates \eqref{23totheunit} is left to the reader.
\end{proof}

\begin{proof}[Proof of Theorem \ref{manyexp}]
By Theorem \ref{approximateversionII}, we have that
\[
u=\prod_{j=1}^{7k+6}(v_j,w_j)\cdot e^{ic} ,\]
where $v_j,w_j\in \U^0(A)$ and $c\in A_{\sa}$, with $c\sim_{\Tr}0$ and $\|c\|$  as small as desired. Thus, we can arrange for Theorem \ref{singleexp} to be applicable, so that $e^{ic}$ is expressed as a product of 23 commutators. In this way,   $u$ is expressed as a product of $7k+6+23=7k+29$ commutators.
\end{proof}

\begin{corollary} \label{Aug}
Let $A$ be as before. Then
\[
\mathrm{DU}^0(A)=\mathrm{DU}(A)=\ker(\Delta_{\Tr}).
\]
\end{corollary}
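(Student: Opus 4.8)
The plan is to prove the two equalities $\mathrm{DU}^0(A)=\ker(\Delta_{\Tr})$ and $\mathrm{DU}^0(A)=\mathrm{DU}(A)$ separately, reading $\ker(\Delta_{\Tr})$ as $\ker\Delta_{\Tr}\cap\U^0(A)$ since the determinant is being restricted to $\U^0(A)$. The inclusion $\mathrm{DU}^0(A)\subseteq\ker\Delta_{\Tr}$ is immediate: the restriction of $\Delta_{\Tr}$ to $\U^0(A)$ is a group homomorphism into the abelian group $A_q/\Tr(\mathrm K_0(A))$, so it annihilates every commutator of elements of $\U^0(A)$, hence all of $\mathrm{DU}^0(A)$. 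For the reverse inclusion I would use the standard fact that every $u\in\U^0(A)$ is a finite product of exponentials $e^{ih}$ with $h\in A_{\sa}$ (cut a continuous path from $1$ to $u$ into pieces close to $1$ and take principal logarithms). Thus any $u\in\ker\Delta_{\Tr}\cap\U^0(A)$ satisfies the hypotheses of Theorem \ref{manyexp}, which exhibits it as a finite product of commutators of elements of $\U^0(A)$; that is, $u\in\mathrm{DU}^0(A)$. This settles $\mathrm{DU}^0(A)=\ker\Delta_{\Tr}\cap\U^0(A)$.

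For $\mathrm{DU}^0(A)=\mathrm{DU}(A)$ only the inclusion $\mathrm{DU}(A)\subseteq\mathrm{DU}^0(A)$ needs work. Since $A$ has stable rank one, $\U(A)/\U^0(A)\cong\mathrm K_1(A)$ is abelian, so every commutator $(u,v)$ with $u,v\in\U(A)$ already lies in $\U^0(A)$, and hence $\mathrm{DU}(A)\subseteq\U^0(A)$. By the first equality it therefore suffices to show $\Delta_{\Tr}((u,v))=0$ for all $u,v\in\U(A)$, since then any finite product of such commutators lies in $\ker\Delta_{\Tr}\cap\U^0(A)=\mathrm{DU}^0(A)$. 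To prove the vanishing I would pass to $M_2(A)$: the element $U:=\mathrm{diag}(u,u^*)$ lies in $\U^0(M_2(A))$ by the usual rotation homotopy, so $U=e^{iH_1}\cdots e^{iH_N}$ for some $H_j\in M_2(A)_{\sa}$; and for $V:=\mathrm{diag}(v,1)$ one checks directly that $(U,V)=\mathrm{diag}((u,v),1)$. Expanding the commutator of a product, $(U,V)$ is a product of $\U^0(M_2(A))$-conjugates of the commutators $(e^{iH_j},V)$, so $\Delta_{\Tr}((U,V))=\sum_{j}\Delta_{\Tr}((e^{iH_j},V))$. Finally $(e^{iH_j},V)=e^{iH_j}e^{-iVH_jV^{-1}}$, whence $\Delta_{\Tr}((e^{iH_j},V))$ is the class of $\frac{1}{2\pi}\big(\Tr(H_j)-\Tr(VH_jV^{-1})\big)$, which is zero because $\Tr(VH_jV^{-1})=\Tr(H_j)$ in $A_q$. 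Invariance of $\Delta_{\Tr}$ under stabilization then gives $\Delta_{\Tr}((u,v))=\Delta_{\Tr}(\mathrm{diag}((u,v),1))=\Delta_{\Tr}((U,V))=0$.

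The only genuinely delicate point is this last vanishing $\Delta_{\Tr}((u,v))=0$ for arbitrary $u,v\in\U(A)$: the determinant is defined only on the connected component, whereas $u$ and $v$ need not be connected to $1$, so one cannot simply write them as exponentials and compute. The $2\times 2$ matrix device is precisely what circumvents this, realizing $(u,v)$ (up to stabilization) as a commutator of two unitaries that \emph{do} lie in the connected component and \emph{are} products of exponentials. Everything else — the exponential decompositions, the commutator-of-a-product identity, the homomorphism and stabilization properties of $\Delta_{\Tr}$ — is routine.
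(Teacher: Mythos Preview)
Your proof is correct and follows the same strategy as the paper's: Theorem \ref{manyexp} for $\mathrm{DU}^0(A)=\ker\Delta_{\Tr}$, then stable-rank-one $\mathrm K_1$-injectivity plus a matrix stabilization trick for $\mathrm{DU}(A)\subseteq\ker\Delta_{\Tr}$. The paper's stabilization step is marginally slicker: it passes to $M_3(A)$ and writes $\mathrm{diag}((u,v),1,1)=(\mathrm{diag}(u,u^*,1),\mathrm{diag}(v,1,v^*))$ with \emph{both} factors in $\U^0(M_3(A))$, so the determinant vanishes immediately without your exponential expansion and commutator-of-a-product computation.
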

\begin{proof}
The equality $\mathrm{DU}^0(A)=\ker(\Delta_{\Tr})$ is a corollary of the previous theorem together with the fact that every element of $\U^0(A)$ is a finite product of exponentials.

To prove $\mathrm{DU}(A)=\ker(\Delta_{\Tr})$, it suffices to show that $\mathrm{DU}(A)\subseteq \ker(\Delta_{\Tr})$ (since we already know that $\mathrm{DU}^0(A)=\ker(\Delta_{\Tr})$).
In general,
\[
\mathrm{DU}(A)\cap \U^0(A)\subseteq \ker(\Delta_{\Tr}).\]
To see this, notice that elements of $\mathrm{DU}(A)$ are in $\mathrm{DU^0}(M_3(A))$, since
\[
\mathrm{diag}((u,v),1,1)=(\mathrm{diag}(u,u^*,1),(\mathrm{diag}(v,1,v^*)).
\]
Hence, if an element of $\mathrm{DU}(A)$ is in $\U^0(A)$, then its de la Harpe-Skandalis determinant evaluates to 0.  Since in our case $A$ is assumed to have stable rank one,  $\mathrm{DU}(A)\subseteq \U^0(A)$ by $\mathrm K_1$-injectivity. Thus, $\mathrm{DU}(A)\subseteq \ker(\Delta_{\Tr})$.
\end{proof}

\begin{theorem}\label{GLcase}
Let $A$ be a separable, simple, pure C*-algebra with stable rank 1 and such
that every 2-quasitracial state on $A$ is a trace.
Let $x \in \mathrm{GL}^0(A)$.

If $\Delta_{\Tr}(x) = 0$ then $x$ is a finite product of multiplicative
commutators in $\mathrm{GL}^0(A)$.
\end{theorem}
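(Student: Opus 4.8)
The plan is to reduce the statement about $\mathrm{GL}^0(A)$ to the unitary case already handled by Theorem \ref{manyexp}, using the polar decomposition together with the fact that $A$ has stable rank one. First I would recall that for $x\in\mathrm{GL}^0(A)$, writing $x=u|x|$ with $u\in\U^0(A)$ and $|x|=(x^*x)^{1/2}$ a positive invertible, one has a continuous path from $x$ to $u$ inside $\mathrm{GL}^0(A)$ (namely $t\mapsto u|x|^{1-t}$), so $u$ and $x$ lie in the same connected component; moreover the de la Harpe--Skandalis determinant is unchanged along this path, since $\Delta_{\Tr}$ is a homomorphism and the positive invertible $|x|^{1-t}$ contributes $\Tr(\mathrm{Log}|x|^{1-t})$, which is \emph{real}-valued in a way that one must check vanishes or is absorbed --- more carefully, I would argue directly that $\Delta_{\Tr}(|x|)=\Delta_{\Tr}(e^{h})=\Tr(h)/(2\pi i)$ where $h=\mathrm{Log}|x|$ is selfadjoint, and that a positive invertible that is a product of exponentials of selfadjoints needs to be handled by the multiplicative-commutator machinery just as in the unitary case. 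The cleanest route: show $\Delta_{\Tr}(u)=\Delta_{\Tr}(x)=0$, conclude by Theorem \ref{manyexp} that $u=\prod_{i}(v_i,w_i)$ with $v_i,w_i\in\U^0(A)$, and separately show $|x|$ is a product of multiplicative commutators in $\mathrm{GL}^0(A)$.

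For the positive part, I would use that $|x|=e^{h}$ with $h\in A_{\sa}$, and that $\Delta_{\Tr}(e^h)$ equals the class of $\Tr(h)/(2\pi i)$; combined with $\Delta_{\Tr}(x)=0$ and $\Delta_{\Tr}(u)=0$ (the latter because $u\in\U^0(A)$ and $\Delta_{\Tr}$ vanishes on any unitary commutator expression --- wait, we need it the other way), I would instead observe $0=\Delta_{\Tr}(x)=\Delta_{\Tr}(u)+\Delta_{\Tr}(|x|)$, and that $\Delta_{\Tr}(u)\in A_q/\Tr(\mathrm K_0(A))$ is the obstruction governed exactly by Theorem \ref{manyexp}. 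The subtlety is that $\Delta_{\Tr}(u)$ need not individually be zero even though $\Delta_{\Tr}(x)=0$, since $\Delta_{\Tr}(|x|)$ can be nonzero. To circumvent this, I would not split as $u\cdot|x|$ but rather write $x$ directly as a product of exponentials $x=\prod_{j=1}^k e^{a_j}$ with $a_j\in A$ (possible since $x\in\mathrm{GL}^0(A)$, which equals the set of finite products of exponentials), and then mimic the proof of Theorem \ref{approximateversionII} and Theorem \ref{manyexp} replacing $\U^0$ by $\mathrm{GL}^0$ throughout: every exponential $e^{a_j}$ can be conjugated into the subalgebra $B$ of Theorem \ref{embedding} up to a $\mathrm{GL}^0$-commutator and a small error, then Theorems \ref{nuc1exp}, \ref{thm:prodexp}, \ref{singleexp} are invoked --- noting that all those arguments, which manipulate exponentials $e^{ih}$ of selfadjoints via Lemmas \ref{lem:almostcommute}, \ref{lem:x2}, \ref{lem:swingshrink}, go through verbatim for $e^{a}$ with $a$ not necessarily selfadjoint, since the identities $e^{a+b}=e^ae^be^c$, $e^{[x^*,x]}=(v,w)e^{ic}$, etc., only used functional-calculus and order-zero-map facts that are insensitive to selfadjointness. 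The point of $\Delta_{\Tr}(x)=0$ enters exactly where it did before: to absorb the matrix-algebra correction $p-q$ via Lemma \ref{lem:July25_2014_3PM}.

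The honest simplest writeup, and the one I would adopt, is: reduce to the unitary case by the observation that $\mathrm{GL}^0(A)$ retracts onto $\U^0(A)$ and that every positive invertible of the form $e^h$ with $\Tr(h)\in\Tr(\mathrm K_0(A))$ (equivalently with $\Delta_{\Tr}(e^h)=0$) is a product of multiplicative commutators in $\mathrm{GL}^0(A)$ --- this last is a $\mathrm{GL}$-analogue of Theorem \ref{manyexp} whose proof is identical after replacing $e^{ih_j}$ by $e^{h_j}$. Then for general $x$ with $\Delta_{\Tr}(x)=0$: write $x=u\cdot|x|$; choose $h=\mathrm{Log}|x|$; pick, using Lemma \ref{lem:July25_2014_3PM}, a selfadjoint $h'$ in a hereditary subalgebra with $h'\sim_{\Tr}-h$ and $e^{ih'}$ a product of two commutators of $\U^0$; then $u'=u e^{ih'}{}^{-1}\cdots$ --- actually I would just note $\Delta_{\Tr}(u)=-\Delta_{\Tr}(|x|)=-[\Tr(h)]$, absorb $\Tr(h)$ into $\U^0(A)$-commutators via Lemma \ref{lem:July25_2014_3PM} to reduce to $\Delta_{\Tr}(u)=0$, apply Theorem \ref{manyexp} to $u$, and apply the $\mathrm{GL}$-analogue of Lemma \ref{9commutators}/Theorem \ref{singleexp} to $|x|$. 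The main obstacle I anticipate is bookkeeping: verifying that Lemmas \ref{lem:almostcommute}, \ref{lem:x2}, \ref{lem:swingshrink} and the Fack-style telescoping in Lemma \ref{9commutators} all survive the passage from selfadjoint exponents to arbitrary ones, and in particular that the approximate-unitary-equivalence steps (which used stable rank one and $x^*x\sim xx^*$) still produce elements of $\mathrm{GL}^0$ rather than only of $\U^0$; this is routine but must be checked line by line, which is why the theorem is stated without a full proof in the excerpt.
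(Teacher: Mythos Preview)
Your overall architecture matches the paper's: use the polar decomposition $x=u|x|$, apply Theorem~\ref{manyexp} to the unitary part $u$, and prove a $\mathrm{GL}^0$-analogue of the exponential/commutator machinery (Theorem~\ref{approximateversionI}, Lemma~\ref{9commutators}) to handle the positive part $|x|=e^{\mathrm{Log}|x|}$. That is exactly the two-step sketch the paper gives.

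However, you have one genuine confusion and one unnecessary detour. The confusion: you write ``$\Delta_{\Tr}(u)$ need not individually be zero even though $\Delta_{\Tr}(x)=0$''. In fact it must be. The determinant of a unitary in $\U^0(A)$ is the class of a \emph{selfadjoint} element of $A_q$ (since $u=\prod e^{ih_j}$ gives $\Delta_{\Tr}(u)=[\sum h_j/2\pi]$), while the determinant of a positive invertible is the class of a \emph{skew-adjoint} element (since $|x|=e^h$ with $h\in A_{\sa}$ gives $\Delta_{\Tr}(|x|)=[h/2\pi i]$). Because $\Tr(\mathrm K_0(A))$ sits inside the selfadjoint part of $A_q$, the equation $\Delta_{\Tr}(u)+\Delta_{\Tr}(|x|)=0$ in $A_q/\Tr(\mathrm K_0(A))$ forces both summands to vanish separately; moreover $\Delta_{\Tr}(|x|)=0$ then forces $\mathrm{Log}|x|\sim_{\Tr}0$ on the nose, not merely modulo $\Tr(\mathrm K_0(A))$. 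So no ``absorbing $\Tr(h)$ via Lemma~\ref{lem:July25_2014_3PM}'' is needed, and your proposed workaround of writing $x=\prod e^{a_j}$ with general $a_j\in A$ is an unnecessary complication (and your claim that the lemmas ``go through verbatim for $a$ not necessarily selfadjoint'' is not obviously true and is not what is required).

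What the paper actually does for Step~1 (the positive part) is more specific than ``replace $e^{ih}$ by $e^{h}$ throughout'': one reproves Theorem~\ref{approximateversionI} for real exponentials $e^c$, $c\in A_{\sa}$, substituting \cite[Lemma~2.6]{thomsen} for \cite[Lemma~5.13]{dlHarpe-Skandalis2} in the proof of Lemma~\ref{lem:x2}, and then reruns the Fack-style telescoping of Lemma~\ref{9commutators} with \cite[Lemma~5.11, Proposition~5.12]{dlHarpe-Skandalis2} in place of Lemma~\ref{lem:swingshrink}. These are the $\mathrm{GL}$-versions of the swing/shrink lemmas, and identifying them is the bookkeeping you flagged as the main obstacle.
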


\begin{proof}[Sketch of proof.]
The proof is very similar to the proof in the unitary case.
We briefly indicate the modifications which need to be made.\\

\emph{Step 1:}
We establish the following:
There exists a number $k \geq 1$ such that
if $a \in A_{\sa}$ and $\Tr(a) = 0$ then
$e^a$ is a product of $k$ multiplicative commutators.

The proof of Step 1 is very similar to the proof of Lemma \ref{9commutators}. 
Firstly, we prove a version of Theorem \ref{approximateversionI}
for exponentials of the
form $e^c$ with $c \in A_{\sa}$ (as opposed to unitary exponentials).
The proof is exactly the same as that for Theorem \ref{approximateversionI}, 
except that in
the proof of Lemma 2.4, \cite{dlHarpe-Skandalis2} Lemma 5.13 
needs to be replaced with
\cite{thomsen} Lemma 2.6.  
Then we follow exactly the same line of argument as that
of Lemma \ref{9commutators}, except that Lemma \ref{lem:swingshrink} 
needs to be replaced with
\cite{dlHarpe-Skandalis2} 
Lemma 5.11 and Proposition 5.12.  (In other words, we are
actually following the line of argument of \cite{dlHarpe-Skandalis2} 
Proposition 6.1.)\\

\emph{Step 2:}
Now suppose that $x \in GL^0(A)$ with $\Delta_{\Tr}(x) = 0$.
Let $x = u |x|$ be the polar decomposition of $x$.
Then $\Delta_{\Tr}(u) = \Delta_{\Tr}(|x|) = 0$.

Since $\Delta_{Tr}(u) = 0$, by Theorem \ref{manyexp}, $u$ is a finite product of
commutators.
Since $|x|$ is a positive invertible, $\Delta_{\Tr}(|x|) = 0$ implies
that $\Tr(Log(|x|))=0$.  Hence, by Step 1, $|x|$ is a finite product of
commutators.  Hence, $x$ is a finite product of commutators, as required.

\end{proof}

\begin{bibdiv}
\begin{biblist}
\bib{antoine-bosa-perera}{article}{
   author={Antoine, Ramon},
   author={Bosa, Joan},
   author={Perera, Francesc},
   title={Completions of monoids with applications to the Cuntz semigroup},
   journal={Internat. J. Math.},
   volume={22},
   date={2011},
   number={6},
   pages={837--861},
}

\bib{ara-perera-toms}{article}{
   author={Ara, Pere},
   author={Perera, Francesc},
   author={Toms, Andrew S.},
   title={$K$-theory for operator algebras. Classification of $C\sp
   *$-algebras},
   conference={
      title={Aspects of operator algebras and applications},
   },
   book={
      series={Contemp. Math.},
      volume={534},
      publisher={Amer. Math. Soc.},
      place={Providence, RI},
   },
   date={2011},
   pages={1--71},
}

\bib{blackadar}{article}{
   author={Blackadar, Bruce E.},
   title={Traces on simple AF $C\sp{\ast} $-algebras},
   journal={J. Funct. Anal.},
   volume={38},
   date={1980},
   number={2},
   pages={156--168},
}

\bib{blackadar2}{article}{
   author={Blackadar, Bruce},
   title={Comparison theory for simple $C\sp *$-algebras},
   conference={
      title={Operator algebras and applications, Vol.\ 1},
   },
   book={
      series={London Math. Soc. Lecture Note Ser.},
      volume={135},
      publisher={Cambridge Univ. Press, Cambridge},
   },
   date={1988},
   pages={21--54},
}

\bib{brown-ciuperca}{article}{
   author={Brown, N.~P.},
   author={Ciuperca, A.},
   title={Isomorphism of Hilbert modules over stably finite $C\sp
   *$-algebras},
   journal={J. Funct. Anal.},
   volume={257},
   date={2009},
   number={1},
   pages={332--339},
   issn={0022-1236},
}

\bib{ciuperca-elliott-santiago}{article}{
   author={Ciuperca, Alin},
   author={Elliott, George A.},
   author={Santiago, Luis},
   title={On inductive limits of type-I $C^*$-algebras with
   one-dimensional spectrum},
   journal={Int. Math. Res. Not. IMRN},
   date={2011},
   number={11},
   pages={2577--2615},
}

\bib{coward-elliott-ivanescu}{article}{
   author={Coward, K.~T.},
   author={Elliott, G. A.},
   author={Ivanescu, C.},
   title={The Cuntz semigroup as an invariant for $C\sp *$-algebras},
   journal={J. Reine Angew. Math.},
   volume={623},
   date={2008},
   pages={161--193},
}

\bib{cuntz-pedersen}{article}{
   author={Cuntz, Joachim},
   author={Pedersen, Gert Kjaerg{\.a}rd},
   title={Equivalence and traces on $C\sp{\ast} $-algebras},
   journal={J. Funct. Anal.},
   volume={33},
   date={1979},
   number={2},
   pages={135--164},
}

\bib{bulletin}{article}{
   author={Elliott, George A.},
   author={Toms, Andrew S.},
   title={Regularity properties in the classification program for separable
   amenable $C\sp *$-algebras},
   journal={Bull. Amer. Math. Soc. (N.S.)},
   volume={45},
   date={2008},
   number={2},
   pages={229--245},
   issn={0273-0979},
 }

\bib{fack}{article}{
   author={Fack, Thierry},
   title={Finite sums of commutators in $C\sp{\ast} $-algebras},
   language={English, with French summary},
   journal={Ann. Inst. Fourier (Grenoble)},
   volume={32},
   date={1982},
   number={1},
   pages={vii, 129--137},
}

\bib{Fack-dlHarpe}{article}{
   author={Fack, Th.},
   author={de la Harpe, P.},
   title={Sommes de commutateurs dans les alg\`ebres de von Neumann finies
   continues},
   language={French},
   journal={Ann. Inst. Fourier (Grenoble)},
   volume={30},
   date={1980},
   number={3},
   pages={49--73},
   issn={0373-0956},
   review={\MR{597017 (81m:46085)}},
}

\bib{dlHarpe-Skandalis1}{article}{
   author={de la Harpe, P.},
   author={Skandalis, G.},
   title={D\'eterminant associ\'e \`a une trace sur une alg\'ebre de Banach},
   language={French, with English summary},
   journal={Ann. Inst. Fourier (Grenoble)},
   volume={34},
   date={1984},
   number={1},
   pages={241--260},
   issn={0373-0956},
}

\bib{dlHarpe-Skandalis2}{article}{
   author={de la Harpe, P.},
   author={Skandalis, G.},
   title={Produits finis de commutateurs dans les $C\sp \ast$-alg\`ebres},
   language={French, with English summary},
   journal={Ann. Inst. Fourier (Grenoble)},
   volume={34},
   date={1984},
   number={4},
   pages={169--202},
   issn={0373-0956},
}

\bib{dlHarpe-Skandalis3}{article}{
   author={de la Harpe, Pierre},
   author={Skandalis, Georges},
   title={Sur la simplicit\'e essentielle du groupe des inversibles et du
   groupe unitaire dans une $C\sp \ast$-alg\`ebre simple},
   language={French, with English summary},
   journal={J. Funct. Anal.},
   volume={62},
   date={1985},
   number={3},
   pages={354--378},
}

\bib{dlHarpeSurvey}{article}{
author={de la Harpe, P.},
title={Fuglede-Kadison determinant: theme and variations},
journal={Proceedings of the National Academy of Sciences of the U.S.},
volume={110},
date={2013},
number={40},
pages={15864--15877},
}

\bib{Fuglede-Kadison}{article}{
   author={Fuglede, Bent},
   author={Kadison, Richard V.},
   title={Determinant theory in finite factors},
   journal={Ann. of Math. (2)},
   volume={55},
   date={1952},
   pages={520--530},
   issn={0003-486X},
}

\bib{jacelon}{article}{
   author={Jacelon, Bhishan},
   title={A simple, monotracial, stably projectionless $C\sp \ast$-algebra},
   journal={J. Lond. Math. Soc. (2)},
   volume={87},
   date={2013},
   number={2},
   pages={365--383},
   issn={0024-6107},
}

\bib{kaftal}{article}{
   author={Kaftal, Victor},
   author={Ng, P. W.},
   author={Zhang, Shuang},
   title={Commutators and linear spans of projections in certain finite
   C*-algebras},
   journal={J. Funct. Anal.},
   volume={266},
   date={2014},
   number={4},
   pages={1883--1912},
   issn={0022-1236},
}

\bib{Katsura-Matui}{article}{
   author={Katsura, Takeshi},
   author={Matui, Hiroki},
   title={Classification of uniformly outer actions of $\Bbb Z\sp 2$ on UHF
   algebras},
   journal={Adv. Math.},
   volume={218},
   date={2008},
   number={3},
   pages={940--968},
   issn={0001-8708},
}

\bib{Lin1}{article}{
   author={Lin, Huaxin},
   title={Asymptotic unitary equivalence and classification of simple
   amenable $C\sp *$-algebras},
   journal={Invent. Math.},
   volume={183},
   date={2011},
   number={2},
   pages={385--450},
   issn={0020-9910},
}

\bib{Lin2}{article}{
   author={Lin, Huaxin},
   title={Exponential rank and exponential length for Z-stable simple C*-algebras},
   date={2013},
eprint={http://arxiv.org/abs/1301.0356},
}


\bib{ng}{article}{
   author={Ng, P. W.},
   title={The kernel of the determinant map on certain simple C*-algebras},
   date={2012},
eprint={http://arxiv.org/abs/1206.6168},
}

\bib{nccw}{article}{
   author={Robert, Leonel},
   title={Classification of inductive limits of 1-dimensional NCCW
   complexes},
   journal={Adv. Math.},
   volume={231},
   date={2012},
   number={5},
   pages={2802--2836},
}

\bib{robert-commutators}{article}{
   author={Robert, Leonel},
   title={Nuclear dimension and sums of commutators},
journal={Indiana Univ. Math. J. (to appear)}
date={2013}
}

\bib{rordam}{article}{
   author={R{\o}rdam, Mikael},
   title={The stable and the real rank of $\scr Z$-absorbing $C\sp
   *$-algebras},
   journal={Internat. J. Math.},
   volume={15},
   date={2004},
   number={10},
   pages={1065--1084},
}

\bib{thomsen}{article}{
   author={Thomsen, Klaus},
   title={Finite sums and products of commutators in inductive limit $C\sp
   \ast$-algebras},
   language={English, with English and French summaries},
   journal={Ann. Inst. Fourier (Grenoble)},
   volume={43},
   date={1993},
   number={1},
   pages={225--249},
}

\bib{winter-zacharias}{article}{
   author={Winter, Wilhelm},
   author={Zacharias, Joachim},
   title={The nuclear dimension of $C\sp \ast$-algebras},
   journal={Adv. Math.},
   volume={224},
   date={2010},
   number={2},
   pages={461--498},
}

\bib{winter}{article}{
      author={Winter, Wilhelm},
       title={Nuclear dimension and {$\mathcal{Z}$}-stability of pure
  $C\sp \ast$-algebras},
        date={2012},
     journal={Invent. Math.},
      volume={187},
      number={2},
       pages={259\ndash 342},
}

\end{biblist}
\end{bibdiv}

\end{document}